\documentclass[11pt,a4paper]{article}

\usepackage{fullpage}

\usepackage[numbers, sort]{natbib}

\usepackage{amsmath,amssymb,amsthm}
\usepackage{tikz}
\usepackage{siunitx}

\usepackage[colorlinks=true]{hyperref}
\hypersetup{urlcolor=blue, citecolor=blue, linkcolor=blue}

\newcommand*{\RR}{{\mathbb{R}}}
\newcommand*{\CC}{{\mathbb{C}}}
\newcommand*{\NN}{{\mathbb{N}}}

\renewcommand*{\SS}{{\mathbb{S}}}

\newcommand{\vA}{{\boldsymbol{A}}}
\newcommand{\vE}{{\boldsymbol{E}}}

\newcommand{\vH}{{\boldsymbol{H}}}
\newcommand{\vU}{{\boldsymbol{U}}}
\newcommand{\vV}{{\boldsymbol{V}}}
\newcommand{\vM}{{\boldsymbol{M}}}
\newcommand{\vN}{{\boldsymbol{N}}}
\newcommand{\vS}{{\boldsymbol{S}}}
\newcommand{\vX}{{\boldsymbol{X}}}

\newcommand{\vW}{{\boldsymbol{W}}}

\newcommand{\vXj}{{\boldsymbol{X}}_{\mkern-2mu j}}
\newcommand{\vYj}{{\boldsymbol{Y}}_{\mkern-2mu j}}
\newcommand{\vSj}{{\boldsymbol{S}}_{\mkern-2mu j}}
\newcommand{\vHj}{{\boldsymbol{H}}_{\mkern-2mu j}}

\newcommand{\vXjj}{{\boldsymbol{X}}_{\mkern-2mu j+1}}

\newcommand{\vHjj}{{\boldsymbol{H}}_{\mkern-2mu j+1}}

\newcommand{\vb}{{\boldsymbol{b}}}
\newcommand{\vc}{{\boldsymbol{c}}}
\newcommand{\vh}{{\boldsymbol{h}}}
\newcommand{\vn}{{\boldsymbol{n}}}
\newcommand{\vp}{{\boldsymbol{p}}}
\newcommand{\vt}{{\boldsymbol{t}}}
\newcommand{\vu}{{\boldsymbol{u}}}

\newcommand{\vx}{{\boldsymbol{x}}}
\newcommand{\vy}{{\boldsymbol{y}}}
\newcommand{\vz}{{\boldsymbol{z}}}
\newcommand{\vxhat}{\widehat{\boldsymbol{x}}}
\newcommand{\vyhat}{\widehat{\boldsymbol{y}}}
\newcommand{\vd}{{\boldsymbol{d}}}
\newcommand{\vP}{{\boldsymbol{P}}}

\newcommand{\veta}{{\boldsymbol{\eta}}}
\newcommand{\vlambda}{{\boldsymbol{\lambda}}}
\newcommand{\vnu}{{\boldsymbol{\nu}}}
\newcommand{\vphi}{{\boldsymbol{\varphi}}}
\newcommand{\vpsi}{{\boldsymbol{\psi}}}
\newcommand{\vzeta}{{\boldsymbol{\zeta}}}

\newcommand{\calF}{{\mathcal{F}}}

\newcommand{\I}{\ensuremath{\mathrm{i}}}
\newcommand{\E}{\ensuremath{\mathrm{e}}}
\newcommand{\D}{\ensuremath{\mathrm{d}}}
\newcommand{\hs}{\ensuremath{\mathrm{HS}}}

\newcommand{\red}[1]{{#1}}

\DeclareMathOperator{\real}{Re}

\DeclareMathOperator{\supp}{supp}
\DeclareMathOperator{\Span}{span}

\DeclareMathOperator{\Dcurl}{\mathbf{curl}}

\DeclareMathOperator{\Ddiv}{div}
\DeclareMathOperator{\DCurl}{Curl} 
\DeclareMathOperator{\DGrad}{\mathbf{Grad}}
\DeclareMathOperator{\DDiv}{Div}

\theoremstyle{plain}
\newtheorem{theorem}{Theorem}
\newtheorem{lemma}[theorem]{Lemma}

\theoremstyle{definition}
\newtheorem{definition}[theorem]{Definition}

\theoremstyle{remark}
\newtheorem{remark}[theorem]{Remark}

\begin{document}

\title{Optimal Design of Tubular Perfectly Conducting Objects in Electromagnetic Chirality}

\author{
Tilo Arens\footnote{Karlsruhe Institute of Technology (KIT), Kaiserstr.\@ 12, 76131 Karlsruhe, Germany
    {\tt tilo.arens@kit.edu, \tt roland.griesmaier@kit.edu, \tt raphael.schurr@kit.edu}},
Roland Griesmaier\footnotemark[1],
Marvin
  Kn\"oller\footnote{Department of Mathematics and Statistics, University of Helsinki, Pietari Kalmin katu 5, FI-00014, Finland
    {\tt marvin.knoller@helsinki.fi}},
Raphael Schurr\footnotemark[1]
    }

\maketitle
\begin{abstract}
   This work is about the shape optimization of long tubular objects in electromagnetic chirality (em-chirality). Em-chirality is a property of individual scattering objects or metamaterials describing their qualitatively different response to electromagnetic waves of opposite polarization handedness. The optimization is performed by a Newton-type iterative maximization of a regularized em-chirality measure with respect to the scatterer's shape. In this context, the differentiability of the object-to-far field operator map is analyzed rigorously, thereby extending previously known results on the domain derivative to the far field operator setting. Our optimal design algorithm is based on the electric field integral equation, which is employed both for the evaluation of scattered fields and for the computation of the domain derivative. Our implementation is done via the boundary element method. The numerical examples presented in this work yield strongly em-chiral scattering objects capable of exciting higher-order modes beyond the dipole regime with nonintuitive shapes that expand the known set of highly em-chiral scattering objects.
\end{abstract}

{
\small\noindent
  Mathematics subject classifications (MSC2020): 78M50, 49Q10, 78A45
  \\\noindent 
  Keywords: shape optimization, Maxwell’s equations, electromagnetic chirality, tubular objects, domain derivatives, electromagnetism, perfect conductor, boundary integral equations
  \\\noindent
Short title: Optimal Design of Perfect Conductors in Electromagnetic Chirality
}

\section{Introduction}
\label{sec:Introduction}

Electromagnetic chirality (em-chirality) was recently introduced as a framework that characterizes and quantifies the asymmetric response of scattering objects to electromagnetic fields of opposing helicities. Helicity is a physical concept that extends the notion of circular polarization handedness -- originally defined for plane waves -- to general electromagnetic fields. While for plane waves helicity manifests itself as pure left or right circular polarization, for arbitrary electromagnetic fields it is defined via the eigenvalues of the helicity operator \cite{Fernan2022}. Within the framework of em-chirality, a scattering object is classified as em-chiral when its scattering response to incident fields of one helicity differs qualitatively from its response to fields of the opposite helicity. In the most extreme scenario, a maximally em-chiral object would not interact with fields of one helicity at all, i.e., it would be invisible with respect to fields of that helicity. \red{However, we will show that -- at least within the class of perfectly conducting scatterers -- no such maximally em-chiral objects do exist. Despite this negative theoretical result regarding the extreme case, highly em-chiral} objects \red{close to the maximum} could be used \red{in practice} as building blocks for metamaterials to manipulate the propagation of light and to trigger astonishing phenomena that would be impossible to obtain with more generic objects. Application areas cover novel chiral metamaterials \cite{FerRoc2019, GanThi2009, GanJus2012}, the construction of (broadband) circular polarizers \cite{HenSch2017, KadMil2019}, synthetic 3D vision systems \cite{UreChe2011} or chiral bio-sensing \cite{SchDre2012, BenMao2013}.

The archetype of an em-chiral object is a helix. Such objects (or variants of it) have been studied intensively in the context of em-chirality (see, e.g., \cite{FerFruRoc2016, GarHam2022, SchDre2012}). While optimized helices are highly effective for various scenarios it is not clear whether more general tubular objects can outperform them. It remains largely unexplored whether the design space of highly em-chiral objects contains other, equally or even more performant geometries. In this work we present a fully three-dimensional shape optimization scheme for tubular electromagnetic scatterers with a free form spine curve and apply it to unveil new designs of perfectly conducting highly em-chiral objects. The overall idea is to optimize a measure of em-chirality introduced in \cite{FerFruRoc2016} with respect to the individual scatterer's shape. Our novel objects deviate strongly from a helical shape and provide new perspectives on how highly em-chiral objects may look like. Although our shape optimizations are restricted to perfectly conducting objects, we emphasize that all the results and algorithms presented in this paper are also applicable to penetrable obstacles with transmission boundary conditions \cite{Schurr2026}.

The treatment of helices and their potential as building blocks in metamaterials can be traced back to Lindman's studies in the early 20th century (see \cite{LinSih1992} for a historical perspective). In the antenna community, helical wires radiating circularly polarized fields were first reported in \cite{Kraus1949, Wheel1947}. Ever since, metamaterials made out of helix building blocks have become a versatile platform for controlling the polarization of electromagnetic waves across wide ranges of the electromagnetic spectrum (see, e.g., \cite{GanThi2009, HenSch2017, KasWeg2016, Pendry2004, WanChe2016, WuLi2011} and the references therein).

Earlier works on optimizing individual scattering objects regarding the chirality measure from \cite{FerFruRoc2016} rely on the assumption that the cross section of the tubular object is extremely small with respect to the wavelength \cite{AreGri2021, FerGri2023, Knoel2023}. In this particular case, an asymptotic perturbation formula \cite{CapGri2021} can be used to replace the necessity of numerically solving Maxwell's equations. While the reported objects are highly em-chiral for one particular frequency, their extremely thin nature complicates a possible fabrication process immensely. In our recent work \cite{AreKnoSch2026}, we have introduced domain derivatives for thick tubular objects and applied them in an inverse scattering problem. These shape derivatives are now used in the present work for establishing Fr\'echet differentiability of the shape-to-far field operator map and to optimize thick tubes regarding their em-chirality. This approach comes at a significantly higher cost than that of \cite{AreGri2021, FerGri2023} as now many full Maxwell problems need to be solved numerically to represent the Fr\'echet derivative, but it allows to treat substantially thicker tubular objects.

This work is structured as follows. In the second section we describe electromagnetic scattering in the presence of a perfectly conducting object. Furthermore, we recall the fundamental properties of em-chirality and present the em-chirality measure that is used for the shape optimization. The third section introduces the Fr\'echet derivative of the far field operator with respect to perturbations of the scattering object. In Section~\ref{sec:OptDesignAlgorithm} we formulate the optimal design algorithm. We discuss the Newton-type algorithm, introduce the penalty terms that regularize the optimization and compute corresponding Fr\'echet derivatives.
In Section~\ref{sec:NumericalExamples} we perform the shape optimizations for optimizing perfect conductors regarding their em-chirality.
We describe our implementation that is based on boundary element methods, show convergence histories of our shape optimization and analyze the final designs of the optimization through frequency scans.
\red{In the appendix we establish a new uniqueness result for electromagnetic inverse obstacle scattering that is used to rule out the existence of maximally em-chiral objects in Section~\ref{sec:ScatProb}.}

\section{The Scattering Problem for an EM-Chiral Scatterer}
\label{sec:ScatProb}

The fundamental underlying problem considered in this paper is the scattering of a time-harmonic electromagnetic wave by a perfectly conducting obstacle given as a bounded domain $D \subseteq \RR^3$ such that~$\RR^3 \setminus \overline{D}$ is connected. We will assume throughout the paper that the boundary of the obstacle is $C^1$ smooth. This will be a necessary condition when considering domain derivatives in Section \ref{sec:Derivative}, not withstanding that many of the results mentioned in the present section are in fact true for Lipschitz domains.

Denote by $\omega > 0$ the angular frequency, and by $\varepsilon > 0$ and $\mu > 0$ the electric permittivity and magnetic permeability, respectively, which will be assumed constant throughout the exterior medium. The total field $(\vE$, $\vH)$ is a solution to the time-harmonic Maxwell system
\begin{equation}
    \label{eq:maxwell}
    \Dcurl \vE - \I \omega \mu \, \vH = 0, \qquad
    \Dcurl \vH + \I \omega \varepsilon \, \vE = 0 \qquad
    \text{in } \RR^3 \setminus \overline{D} \, ,
\end{equation}
and satisfies the perfectly conducting boundary condition
\begin{equation}
 \label{eq:bc_perf_cond}
 \vE \times \vnu = 0 \qquad \text{on } \partial D \, .
\end{equation}
Here, $\vnu$ denotes the outward drawn normal of length $1$ to $\partial D$.

In order for the scattering problem to be well-posed, we additionally require a radiation condition. Denoting by $(\vE^i, \vH^i)$ the incident field, a solution to \eqref{eq:maxwell} throughout $\RR^3$, the scattered field $(\vE^s, \vH^s) = (\vE, \vH) - (\vE^i, \vH^i)$ is assumed to satisfy the Silver--M{\"u}ller radiation condition,
\begin{equation}
    \label{eq:SMRC}
    \lim_{\vert \vx \vert \rightarrow \infty} \vert \vx \vert \left( \sqrt{\mu} \, \vH^s(\vx) \times \vxhat - \sqrt{\varepsilon} \, \vE^s(\vx) \right) = 0
\end{equation}
uniformly with respect to $\vxhat = {\vx}/{\vert \vx \vert} \in \SS^2 = \{ \vx \in \RR \;\colon | \vx | = 1 \}$.

Throughout, we will consider weak solutions to \eqref{eq:maxwell}--\eqref{eq:SMRC}. Let $B_R \subseteq \RR^3$ denote the open ball of radius $R$ centered at the origin such that $\overline{D} \subseteq B_R$ and set $\Omega = B_R \setminus \overline{D}$. We will make frequent use of the Sobolev spaces $ H(\Dcurl, \Omega)$, $H(\Dcurl^2, \Omega)$, and for some connected boundary section $\Gamma \subseteq \partial \Omega$, the boundary spaces $L^2_t(\Gamma)$, $H^{-1/2}(\DDiv, \Gamma)$, $H^{-1/2}(\DCurl, \Gamma)$ as well as the tangential trace operators $\gamma_t$, $\gamma_T$. We refer to \cite{BufCosShe2002, BufHip2003, KirHet2015, Monk2003} for the relevant definitions. We also use the duality pairing
\begin{equation*}
	\langle \vU , \vV \rangle_{t, \Gamma} = \int_{\Gamma} \vU \cdot \overline{(\vnu \times \vV)} \; \D s \, , \qquad \vU , \, \vV \in L^2_t(\Gamma) \, ,
\end{equation*}
and note that $H^{-1/2}(\DDiv, \Gamma)$ is self-dual with respect to the continuous extension of this sesquilinear form. Finally, we denote by $\Lambda$ the Calderón operator for the exterior of $B_R$. See \cite[Sec. 9.4.1]{Monk2003} for the definition of this operator.

Define
\begin{equation*}
	H_{\mathrm{pc}}(\Dcurl, \Omega) = \{ \vE \in H(\Dcurl, \Omega) \;\colon \gamma_t \vE = 0 \text{ on } \partial D \} \, ,
\end{equation*}
and for $\vU$, $\vV \in H_{\mathrm{pc}}(\Dcurl, \Omega) $, the forms
\begin{align*}
 \mathcal{A}(\vU,\vV) & = \int_{\Omega} \Big( \Dcurl \vU \cdot \overline{\Dcurl \vV} - \omega^2 \varepsilon \mu \, \vU \cdot \overline{\vV} \Big) \D\vx + \I \omega \mu \left\langle \Lambda(\gamma_t \vU), \gamma_t \vV \right\rangle_{t, \partial B_R} \, , \\
 \ell(\vV) & = \I \omega \mu \left\langle \Lambda(\gamma_t \vE^i) + \gamma_N \vE^i , \gamma_t \vV \right\rangle_{t, \partial B_R} \, ,
\end{align*}
where $\gamma_N$ denotes the electric-to-magnetic trace operator $\gamma_N \vU = 1/(\I \omega \mu) \, \gamma_t \Dcurl \vU$. The weak formulation of \eqref{eq:maxwell}--\eqref{eq:SMRC} is to find $\vE \in H_{\mathrm{pc}}(\Dcurl, \Omega)$ such that
\begin{align}
  \label{eq:weakFormOps}
  \mathcal{A}(\vE,\vV) = \ell(\vV) \qquad \text{for all } \vV \in H_{\mathrm{pc}}(\Dcurl, \Omega) \, .
\end{align}
Problem \eqref{eq:weakFormOps} is uniquely solvable (see \cite[Thm. 10.7]{Monk2003}). Note that due to the boundedness of the Calderón operator and the tangential traces, there holds
\begin{align}
  \label{eq:estimate_for_ell}
    \| \ell \|_{H_{\mathrm{pc}}(\Dcurl, \Omega)^*} \leq C \| \vE^i \|_{H(\Dcurl, \Omega)}
\end{align}
for every incident field $\vE^i \in H(\Dcurl, \Omega)$.

Our main goal is to design \emph{highly electromagnetically chiral} scatterers. The exact definition of this term is based on the \emph{far field operator,} which we define next. For any direction of propagation $\vd \in \SS^2$, polarization vector $\vp \in \CC^3 \setminus \{ 0 \}$ with $\vp \cdot \vd = 0$ and wave number $k = \omega \sqrt{\varepsilon \mu}$, we consider incident plane waves
\begin{align*}
    \vE^i(\vx, \vd, \vp) = \vp \, \E^{\I k \vd \cdot \vx} \, , \qquad
    \vH^i(\vx, \vd, \vp) = \sqrt{\frac{\varepsilon}{\mu}} \, \vd \times \vp \, \E^{\I k \vd \cdot \vx} \, ,
    \qquad \vx \in \RR^3\, .
\end{align*}
We denote by $\vE^s(\,\cdot\, , \vd, \vp)$, $\vH^s(\,\cdot\, , \vd, \vp)$ the corresponding scattered fields obtained by solving \eqref{eq:weakFormOps}. Recall that as a consequence of the radiation condition, the scattered fields have the asymptotic behavior
\begin{equation}
  \label{eq:Es_asymp}
    \left.
  \begin{aligned}
    \vE^s(\vx, \vd, \vp) & = \frac{\E^{\I k \vert \vx \vert}}{4\pi \vert \vx \vert} \left( \vE^{\infty}(\vxhat, \vd, \vp) + \mathcal{O}\left(\frac{1}{\vert \vx \vert} \right) \right) \\
    \vH^s(\vx, \vd, \vp) & = \frac{\E^{\I k \vert \vx \vert}}{4\pi \vert \vx \vert} \left( \red{\vH^{\infty}}(\vxhat, \vd, \vp) + \mathcal{O}\left(\frac{1}{\vert \vx \vert} \right) \right)
    \end{aligned}
    \quad \right\}
    \qquad \vert \vx \vert \rightarrow \infty,
\end{equation} 
uniformly in all directions $\vxhat = {\vx}/{\vert \vx \vert} \in \SS^2$ with the far field \red{patterns $\vE^{\infty}(\cdot, \vd, \vp), \vH^{\infty}(\cdot, \vd, \vp) \in L^2_t(\SS^2)$} for all admissible~$\vd$,~$\vp$. \red{We note that $\vH^{\infty}(\vxhat, \vd, \vp) = \vxhat\times \vE^{\infty}(\vxhat, \vd, \vp)$.}  More generally, we use an electromagnetic Herglotz wave pair with density $\vphi \in L_t^2(\SS^2)$,
\begin{equation}
 \label{eq:def_herglotz_wave_pair}
 \begin{aligned}
    \vE^i[\vphi](\vx) & = \int_{\SS^2} \vphi(\vd) \, \E^{\I k \vd \cdot \vx} \; \D s(\vd) \, ,
    \\[1ex]
    \vH^i[\vphi](\vx) & = \sqrt{\frac{\varepsilon}{\mu}} \, \int_{\SS^2} \vd \times \vphi(\vd) \, \E^{\I k \vd \cdot \vx} \; \D s(\vd) \, ,
 \end{aligned}
 \qquad\qquad \vx \in \RR^3\, ,
\end{equation}
as the incident field. By superposition, the corresponding scattered electric field $\vE^s[\vphi]$ is given by
\begin{equation*}
    \vE^s[\vphi](\vx) = \int_{\SS^2} \vE^s(\vx, \vd, \vphi(\vd)) \; \D s(\vd) \, , \qquad \vx \in \RR^3\, ,
\end{equation*}
and its far field pattern is
\begin{equation}
  \label{eq:Einf_vphi}
    \vE^{\infty}[\vphi](\vxhat) = \int_{\SS^2} \vE^{\infty}(\vxhat, \vd, \vphi(\vd)) \; \D s(\vd) \, , \qquad \vx \in \RR^3\, .
  \end{equation}
The far field operator $\mathcal{F}_D \colon L_t^2(\SS^2) \rightarrow L_t^2(\SS^2)$ for the scatterer $D$ is defined by
\begin{align*}
    \mathcal{F}_D \vphi = \vE^{\infty}[\vphi] \, , \qquad \vphi \in L_t^2(\SS^2) \, .
\end{align*}
Note that $\mathcal{F}_D$ is a linear integral operator with analytic kernel. Hence, its singular values are decreasing at least exponentially. In particular, $\mathcal{F}_D$ is  a Hilbert-Schmidt operator and its singular values form a square-summable sequence.
In the \red{S}ections \ref{sec:Derivative}--\ref{sec:NumericalExamples}, we will consider far field operators for many different scatterers $D$, hence we have included the scatterer as an index to the far field operator.

In \cite{FerFruRoc2016}, a definition of an \emph{electromagnetically chiral (em-chiral) scatterer} based on its interaction with electromagnetic fields has been proposed. This notion has been incorporated into the mathematical setting of a scattering problem described above in \cite{AreHag2018}. We here give a short overview over the main results.

Any plane wave can be uniquely decomposed into the sum of two plane waves with circular polarization (i.e. the amplitude vector of the real part of the wave performs a circular motion when observed along any line in the direction of propagation, \red{which is equivalent to $\I \, (\vd \times \vp) = \pm\vp$}) but with opposite orientation. The orientation of this circular motion is called the field's \emph{helicity.} This notion can be generalized by observing that there is a connection to an eigenvalue problem: a \emph{Beltrami field} is an eigenfunction of the operator $(1/k) \Dcurl$ for the eigenvalue $\pm 1$ and the sign of this eigenvalue is called the field's helicity (and this is equivalent to the previous definition in case of a plane wave). It turns out that any solution to the Maxwell system can be uniquely represented as the sum of two Beltrami fields with opposite helicity.

For a Herglotz wave pair, the helicity can be obtained directly from the density $\vphi \in L^2_t(\SS^2)$. Consider the operator $\mathcal{C}\colon L_t^2(\SS^2) \rightarrow L_t^2(\SS^2)$,
\begin{align*}
    \mathcal{C}\vphi(\vd) = \I \left( \vd \times \vphi(\vd) \right) , \qquad \vphi \in L^2_t(\SS^2) \, .
\end{align*}
It is not difficult to establish that the Herglotz wave pair with density $\vphi$ has helicity $p \in \{ +, - \}$ if and only if  $\vphi \in V^p$ where
\begin{equation}
  \label{eq:V_pm}
  V^{\pm} = \left\{ \widetilde{\vphi} \pm \mathcal{C} \widetilde{\vphi} \;\colon\, \widetilde{\vphi} \in L_t^2(\SS^2) \right\} .
\end{equation}
Note also that these two subspaces are orthogonal, that the orthogonal projections $\mathcal{P}^{\pm} \colon L_t^2(\SS^2) \to V^\pm$ are given by $\mathcal{P}^{\pm} = (\mathcal{I} \pm \mathcal{C})/2$, and that $L^2_t(\SS^2) = V^+ \oplus V^-$. Thus, the representation of a Herglotz wave pair with density $\vphi \in L^2_t(\SS^2)$ as a sum of two Beltrami fields of opposite helicity is exactly the same as writing it as the sum of two Herglotz wave pairs with density $\mathcal{P}^\pm \vphi$.

A completely analogous situation holds for a solution $(\vE^s, \vH^s)$ of the Maxwell system in the exterior of some ball that satisfies the Silver--M\"uller radiation condition. The corresponding Beltrami fields of helicity~$\pm$ also satisfy the asymptotics \eqref{eq:Es_asymp} with far field patterns in $V^\pm$, respectively.

In conclusion, we can decompose the far field operator into components associated with particular incident and scattering helicity by composition with the orthogonal projections onto $V^\pm$,
\begin{align*}
    \mathcal{F}_D = \mathcal{F}_D^{++} + \mathcal{F}_D^{+-} + \mathcal{F}_D^{-+} + \mathcal{F}_D^{--}\, ,
    \qquad \mathcal{F}_D^{pq} = \mathcal{P}^{p} \mathcal{F}_D \mathcal{P}^{q} \, , \quad p, q \in \{+,-\} \, .
\end{align*}
The definition of electromagnetic chirality is based on this decomposition.

\begin{definition}\cite{FerFruRoc2016, AreHag2018}
 \label{defi:em-chirality}
 The scatterer $D$ is called electromagnetically achiral (em-achiral) if there exist unitary operators $\mathcal{U}_j \in L^2_t(\SS^2)$ with $\mathcal{U}_j \mathcal{C} = -\mathcal{C} \mathcal{U}_j$, $j = 1,\ldots, 4$, such that
 \begin{equation*}
    \mathcal{F}_D^{++} = \mathcal{U}_1 \mathcal{F}_D^{--} \mathcal{U}_2 \, , \qquad
    \mathcal{F}_D^{+-} = \mathcal{U}_3 \mathcal{F}_D^{-+} \mathcal{U}_4 \, .
 \end{equation*}
 If this is not the case, we call the scatterer D electromagnetically chiral (em-chiral).
\end{definition}

\begin{remark}
\label{remark:chirality}
 \begin{itemize}
  \item[(i)] The condition $\mathcal{U} \mathcal{C} = -\mathcal{C} \mathcal{U}$ for unitary $\mathcal{U}$ means that this operator flips helicity. Such an operator bijectively maps $V^+$ to $V^-$ and vice versa.
  \item[(ii)] As discussed in \cite{AreHag2018}, any perfectly conducting object that can be superimposed onto its mirror image (in other words: a geometrically achiral object), is em-achiral.
  \item[(iii)] Em-achirality implies that the singular values of $\mathcal{F}_D^{++}$ coincide with those of $\mathcal{F}_D^{--}$ and that the singular values of $\mathcal{F}_D^{+-}$ coincide with those of $\mathcal{F}_D^{-+}$.
 \end{itemize}
\end{remark}

At first glance, Definition \ref{defi:em-chirality} provides just a binary criterion: a scatterer is em-chiral or it is not. However, through the connection to the singular values of the component operators, it is possible to introduce a \emph{measure} that quantifies \emph{how em-chiral} a scatterer is. The original measure introduced in \cite{FerFruRoc2016} has turned out not to be sufficiently regular to be suitable as an objective functional in a shape optimization algorithm. Hence, a modified, more regular functional was introduced in \cite{Hagem2019} and has since been successfully used in shape optimization procedures \cite{AreGri2021, FerGri2023}.

The definition of the modified functional requires one additional functional analytic concept: for some Hilbert space $X$ with scalar product $( \cdot , \cdot )_{X}$, we will denote by $\hs(X)$ the space of Hilbert-Schmidt operators on $X$. Additionally, let $\left( g_j \right)_{j\in\NN}$ denote some complete orthogonal system in $X$. Then 
$\hs(X)$  is itself a Hilbert space when equipped with the scalar product (see \cite[XI.6]{DunSch1988} or \cite[Thm VI.22]{ReeSim2010})
\begin{align*}
    \left( \mathcal{G} , \mathcal{H} \right)_{\hs} = \sum_{j=1}^{\infty} \left( \mathcal{G} g_j , \mathcal{H} g_j \right)_X \, , \qquad \mathcal{G}, \mathcal{H} \in \hs(X) \, .
\end{align*}

\begin{definition}
  \label{defi:chi_hs}
    The functional $\chi_{\hs} \colon \hs(L_t^2(\SS^2)) \rightarrow \RR_{\geq 0}$ defined by
    \begin{align*}
        \chi_{\hs}(\mathcal{F}) = \Big( \Vert \mathcal{F} \Vert_{\hs}^2 - 2 \big(\Vert \mathcal{F^{++}} \Vert_{\hs} \Vert \mathcal{F^{--}} \Vert_{\hs} + \Vert \mathcal{F^{+-}} \Vert_{\hs} \Vert \mathcal{F^{-+}} \Vert_{\hs} \big) \Big)^{\mkern-3mu \frac{1}{2}}
    \end{align*}
    is called the \textbf{em-chirality measure}.
\end{definition}

It immediately follows that $\chi_{\hs}(\mathcal{F})$ is bounded by $\Vert \mathcal{F} \Vert_{\hs}$. If a scatterer satisfies
\[
  \mathcal{F^{++}} = 0 \text{ and } \mathcal{F^{-+}} = 0
  \qquad \text{or} \qquad
  \mathcal{F^{--}} = 0 \text{ and } \mathcal{F^{+-}} = 0 \, ,
\]
then \red{this} maximum is attained. Due to reciprocity, which implies $\mathcal{F}^{+-} = 0$ if and only if $\mathcal{F}^{-+} = 0$, the reverse is also true \red{(see \cite[Lmm.~4.3]{AreHag2018})}. A scatterer with this property is called \emph{maximally em-chiral} and it is effectively invisible to incident fields of one of the two helicities.
\red{In Theorem \ref{thm:UniquenessPEC} in the appendix below, we show that the electric far field patterns of the fields of one helicity scattered by any perfect conductor from incident plane waves of the same helicity suffice to uniquely identify this obstacle. In other words, knowledge of~$\mathcal{F}^{++}$ or of~$\mathcal{F}^{--}$ alone suffices to uniquely identify a perfectly conducting obstacle. Since for any maximally em-chiral obstacle these two operators are zero, this shows that a perfectly conducting maximally em-chiral scatterer cannot exist.

\begin{theorem}
  \label{thm:no_max_chiral_object}
  There is no perfectly conducting maximally em-chiral scatterer. 
\end{theorem}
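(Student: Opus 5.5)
The plan is a proof by contradiction that uses the uniqueness result of Theorem~\ref{thm:UniquenessPEC} from the appendix. Suppose $D$ is a perfectly conducting, maximally em-chiral scatterer. By the discussion following Definition~\ref{defi:chi_hs}, together with reciprocity \cite[Lmm.~4.3]{AreHag2018}, this means that either $\mathcal{F}_D^{++}=0$ and $\mathcal{F}_D^{-+}=0$, or $\mathcal{F}_D^{--}=0$ and $\mathcal{F}_D^{+-}=0$. The two cases are symmetric under exchanging the helicities, so I will treat the first one; in particular $\mathcal{F}_D^{++}=\mathcal{P}^+\mathcal{F}_D\mathcal{P}^+=0$.

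Next I compare $D$ with a reference configuration whose $++$-component also vanishes. The natural choice is the empty scatterer, for which the scattered field, and hence the far field operator, is zero. If Theorem~\ref{thm:UniquenessPEC} is only formulated for two genuine obstacles $D_1,D_2$, I would instead use a second obstacle with the same vanishing data. One option is the mirror image $D'$ of $D$: reflection flips helicity, so $\mathcal{F}_{D'}^{++}$ is unitarily equivalent to $\mathcal{F}_D^{--}$, and this is not obviously zero, so the mirror image does not work directly. The better option is a translate $D+\vz$ with $\vz\neq0$. The far field operator of $D+\vz$ is
\[
\mathcal{F}_{D+\vz}\vphi(\vxhat)=\E^{-\I k\vxhat\cdot\vz}\,\mathcal{F}_D\big(\E^{\I k(\cdot)\cdot\vz}\vphi\big)(\vxhat).
\]
Multiplication by a scalar function commutes with $\mathcal{C}$, and hence with $\mathcal{P}^\pm$. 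Therefore $\mathcal{F}_{D+\vz}^{++}=0$ as well. Choosing $\vz$ with $|\vz|$ larger than the diameter of $D$ gives $D+\vz\neq D$, which are two distinct obstacles with identical $++$-data.

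Concretely, $\mathcal{F}_D^{++}=0$ says that for every incident plane wave of helicity $+$ (direction $\vd$, polarization $\vp$ with $\I(\vd\times\vp)=\vp$), the $+$-helicity part of the electric far field pattern vanishes. This is exactly the data that, by Theorem~\ref{thm:UniquenessPEC}, determines a perfect conductor uniquely. Since this data coincides for $D$ and $D+\vz$, or for $D$ and the empty set, the theorem forces the two obstacles to be equal. That is a contradiction, and the second case follows in the same way with $\mathcal{F}_D^{--}$.

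The main obstacle is hidden in Theorem~\ref{thm:UniquenessPEC} itself. There one has to show that vanishing of the same-helicity projected far fields implies that the full scattered fields agree outside both obstacles; this uses the Beltrami decomposition and a Rellich-type argument, followed by the standard Kirsch–Kress singular-source argument. Given that result, the only care needed here is to match its hypotheses exactly: whether it is stated for plane waves or Herglotz densities, whether it allows a trivial comparison obstacle, and that the translation argument preserves the vanishing of the helicity projections.
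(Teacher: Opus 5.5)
Your reduction is the paper's own argument. The statement is deduced from Theorem~\ref{thm:UniquenessPEC} by noting that the $++$ (or $--$) data of a maximally em-chiral obstacle vanish. Your translate $D+\vz$, using that scalar multipliers commute with $\mathcal{C}$, cleanly supplies the comparison obstacle the paper leaves implicit.

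There is, however, a genuine gap, and it is the step you flag yourself. Equality (or vanishing) of $\mathcal{P}^+\vE^\infty$ for $+$-polarized incident waves gives, via Rellich's lemma, only equality of the $+$ Beltrami parts $\tfrac12(\vE^s+\tfrac1k\Dcurl\vE^s)$ of the scattered fields, not of the full fields. The appendix proof bridges this by passing from \eqref{eq:FarfieldsEqual} to \eqref{eq:FarfieldsWEqual}, and that step is not justified. The field $\vW^s_{e,j}$ in \eqref{eq:MaxwellWsej} is the field scattered by the perfect conductor from the incident field $\vW^i_e$. For a $+$ plane wave, the incident field $\vE^i+\I\vH^i=2\vE^i$ produces the far field $2\vE^\infty_j$, not $2\mathcal{P}^+\vE^\infty_j$. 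Nor can the singular-source step be run on the $+$ parts alone without a new idea. Suppose $\partial D_1$ is flat near $\vx_*$, and let $R$ be the reflection in that plane. The image field $\vx\mapsto -R\,\vW^i_e(R\vx)$ cancels the tangential trace of $\vW^i_e$ there, and it is a $-$ Beltrami field. So the $+$ part of the field scattered from a $+$ dipole at $\vz_n\to\vx_*$ stays bounded near $\vx_*$, and no contradiction arises. As written, your proof therefore rests on a one-helicity uniqueness result whose proof in the appendix does not go through.

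The repair is to keep the information you discarded. Maximal em-chirality gives $\mathcal{F}^{++}_D=\mathcal{F}^{-+}_D=0$, i.e.\ $\mathcal{F}_D\mathcal{P}^+=0$. By continuity of the kernel $(\vxhat,\vd)\mapsto\vE^\infty(\vxhat,\vd,\cdot)$, this means $\vE^\infty(\cdot,\vd,\vp)=0$ for every $\vp\neq0$ with $\I(\vd\times\vp)=\vp$. Rellich's lemma and connectedness of $\RR^3\setminus\overline{D}$ then give $\vE^s(\cdot,\vd,\vp)=0$. The boundary condition therefore becomes $\vnu(\vx)\times\vp=0$, i.e.\ $\vp=(\vp\cdot\vnu(\vx))\,\vnu(\vx)$, at any $\vx\in\partial D$. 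But a circular polarization is isotropic: $\vp\cdot\vp=\I\,\vp\cdot(\vd\times\vp)=0$. In contrast, $(c\,\vnu)\cdot(c\,\vnu)=c^2\neq0$ for real $\vnu$ and $c\neq0$, which is a contradiction. The other case is identical with $\mathcal{P}^-$. This argument needs neither Theorem~\ref{thm:UniquenessPEC} nor a second obstacle. It uses only the full far fields for incident waves of one helicity, and for that data the Kirsch--Kress argument of the appendix does work.
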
}

It is desirable for many applications to make use of scatterers with $\chi_{\hs}$ \red{close to} its maximum, which motivates us to find such objects through a shape optimization process. The idea is to employ a Quasi-Newton scheme to find scatterers with a high value of the ratio $\chi_{\hs}(\mathcal{F}) / \Vert \mathcal{F} \Vert_{\hs}$.

%
\section{The Fr\'echet Derivative of the Far Field Operator}
\label{sec:Derivative}

To perform shape optimization, it is necessary to understand how changes of the shape of $D$ affect the chirality measure $\chi_\hs(\mathcal{F}_D)$. As a first step, in this section we will discuss differentiability of the map~$\partial D \mapsto \mathcal{F}_D$. We will base our analysis on the corresponding derivative of the far field map which, for a fixed incident field $\vE^i$, maps the boundary $\partial D$ to the far field pattern $\vE^\infty$ of the scattered field in the scattering problem \eqref{eq:maxwell}-\eqref{eq:SMRC}. The existence of this Fr\'echet derivative for scattering by a perfect conductor was established in \cite{Hagem2019}. This derivation in turn follows very closely the variational approach taken in \cite{Hettl2012} for a penetrable scatterer. Here, we revisit this derivation and present some modified estimates which turn out to be crucial for establishing Fr\'echet differentiability of $\calF_D$.

For a differentiable vector field $\vh$, we denote its Jacobian by $J_{\vh}$. We equip the space of compactly supported, continuously differentiable vector fields with the norm $\| \vh \|_{1, \infty} = \| \vh \|_\infty + \| J_\vh \|_\infty$, where, for some matrix valued function $J$, $\| J \|_\infty$ denotes the supremum of the row sum norm of $J(\vx)$, $\vx \in \RR^3$. Consider a set
\begin{equation}
 \label{eq:def_P}
 \mathcal{P} = \left\{ \vh \in C^1(\RR^3, \RR^3) : \partial D \subseteq \operatorname{supp} \vh \subseteq \mathcal{U} \text{ with } \mathcal{U} \text{ open, bounded, } \Vert \vh \Vert_{1, \infty} < \frac{1}{2} \right\}
\end{equation}
of admissible perturbations. Then, for $\vh \in \mathcal{P}$, $\vx \mapsto \veta_\vh(\vx) = \vx + \vh(\vx)$ is a diffeomorphism and we obtain the perturbed domain $D_\vh$ with boundary $\partial D_{\vh} = \{ \veta_\vh(\vx) : \vx \in \partial D \}$. The weak formulation of the scattering problem for the perturbed domain is given by \eqref{eq:weakFormOps} with $\vE, \vV$ and $\Omega$ replaced by  $\vE_{\vh}, \vV_{\vh}$ and $\Omega_{\vh} = B_R \setminus \overline{D_{\vh}}$, respectively.

The diffeomorphism defined above makes it possible to pull back to the original geometry. We define
\begin{align*}
  \widehat{\vE}_{\vh}(\vx) = J_{\veta_\vh}^{\top}(\vx) \, \vE_{\vh} (\veta_\vh(\vx)) = \left( I + J_{\vh}^{\top}(\vx) \right) \, \vE_{\vh} (\vx + \vh(\vx)) \, .
\end{align*}
By \cite[Cor. 3.58]{Monk2003}, we know that $\widehat{\vE}_{\vh} \in H(\Dcurl, \Omega)$ if and only if $\vE_\vh \in H(\Dcurl, \Omega_\vh)$ and that
\[
  \left( \Dcurl \vE_\vh \right) ( \vx + \vh(\vx)) = \frac{1}{\det J_{\veta_\vh}(\vx)} \, J_{\veta_\vh}(\vx) \Dcurl \widehat{\vE}_{\vh}(\vx) \, , \quad \vx \in \Omega \, .
\]
We see that $\widehat{\vE}_{\vh}$ satisfies
\begin{align}
  \label{eq:weak_form_ops_with_h}
  \mathcal{A}_{\vh}(\widehat{\vE}_{\vh},\vV) = \ell(\vV) \qquad \text{for all } \vV \in H_{\mathrm{pc}}(\Dcurl, \Omega) \, ,
\end{align}
with
\begin{multline*}
  \mathcal{A}_{\vh}(\widehat{\vE}_{\vh},\vV) = \int_{\Omega} \Big( \Dcurl \widehat{\vE}_{\vh} \cdot \frac{ J_{\veta_\vh}^{\top} J_{\veta_\vh}}{ \det \, J_{\veta_\vh}} \, \overline{\Dcurl \vV} - k^2 \widehat{\vE}_{\vh} \cdot J_{\veta_\vh}^{-1} J_{\veta_\vh}^{-\top} \overline{\vV} \; \det \, J_{\veta_\vh} \Big) \D \vy \\
  {} - \I \omega \mu \int_{\partial B_R} \Lambda( \gamma_t \widehat{\vE}_{\vh}) \cdot \overline{\vV} \; \D s
\end{multline*}

Before considering differentiability, we state the following preliminary continuity result which clarifies the dependence of the estimate of \cite[Thm. 3.3]{Hagem2019} on the incident field.

\begin{lemma}
 \label{lemma:estimates_perturbation_of_E_Hcurl}
  Let $\vh \in \mathcal{P}$ and $\vE$, $\widehat{\vE}_\vh \in H(\Dcurl, \Omega)$ be the solutions of \eqref{eq:weakFormOps} and \eqref{eq:weak_form_ops_with_h} for the same incident field $\vE^i$, respectively. Then,
  \begin{align*}
    \big\Vert \widehat{\vE}_{\vh} - \vE \big\Vert_{H(\Dcurl,\Omega)} \leq C \, \Vert\vh\Vert_{1, \infty} \Vert \vE^i \Vert_{H(\Dcurl,\Omega)} \, .
  \end{align*}
\end{lemma}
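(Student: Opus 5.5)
Subtracting the two variational problems \eqref{eq:weakFormOps} and \eqref{eq:weak_form_ops_with_h} (both posed on the fixed domain $\Omega$ with the same right-hand side $\ell$, since $\vh$ vanishes near $\partial B_R$ and the pull-back leaves the Calderón term untouched) gives, for all $\vV \in H_{\mathrm{pc}}(\Dcurl,\Omega)$,
\begin{equation*}
  \mathcal{A}(\widehat{\vE}_\vh - \vE, \vV) = \mathcal{A}(\widehat{\vE}_\vh,\vV) - \mathcal{A}_\vh(\widehat{\vE}_\vh,\vV) \, .
\end{equation*}
We will also check that the pull-back $\widehat{\vE}_\vh$ lies in $H_{\mathrm{pc}}(\Dcurl,\Omega)$: the covariant transform $J_{\veta_\vh}^\top \vE_\vh\circ\veta_\vh$ preserves vanishing tangential traces on $\partial D$. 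The right-hand side only involves the volume terms, since $\supp\vh\subseteq\mathcal{U}$ and we may assume $\overline{\mathcal{U}}\subseteq B_R$, so all matrix coefficients equal the identity near $\partial B_R$.

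Since $\mathcal{A}$ is the sesquilinear form of a uniquely solvable problem (Fredholm of index zero, injective by \cite[Thm.~10.7]{Monk2003}), it satisfies an inf-sup condition on $H_{\mathrm{pc}}(\Dcurl,\Omega)$. This yields
\begin{equation*}
  \|\widehat{\vE}_\vh - \vE\|_{H(\Dcurl,\Omega)} \le C \sup_{\vV\neq 0} \frac{|(\mathcal{A}-\mathcal{A}_\vh)(\widehat{\vE}_\vh,\vV)|}{\|\vV\|_{H(\Dcurl,\Omega)}} \, .
\end{equation*}

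Next we estimate the coefficient differences pointwise. Because $\|J_\vh\|_\infty<1/2$, a Neumann series argument gives
\begin{equation*}
  \Big\| \frac{J_{\veta_\vh}^\top J_{\veta_\vh}}{\det J_{\veta_\vh}} - I \Big\|_\infty + \big\| \det J_{\veta_\vh}\, J_{\veta_\vh}^{-1}J_{\veta_\vh}^{-\top} - I \big\|_\infty \le C\|\vh\|_{1,\infty} \, ,
\end{equation*}
where $C$ depends only on the bound $1/2$. The determinant is bounded below uniformly, and the inverse is a convergent series in $J_\vh$. Hence
\begin{equation*}
  |(\mathcal{A}-\mathcal{A}_\vh)(\widehat{\vE}_\vh,\vV)| \le C\|\vh\|_{1,\infty}\|\widehat{\vE}_\vh\|_{H(\Dcurl,\Omega)}\|\vV\|_{H(\Dcurl,\Omega)} \, .
\end{equation*}

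It remains to bound $\|\widehat{\vE}_\vh\|_{H(\Dcurl,\Omega)}\le C\|\vE^i\|_{H(\Dcurl,\Omega)}$ with $C$ independent of $\vh\in\mathcal{P}$. This is the main obstacle, since it requires a uniform stability estimate for the perturbed problems, and it is exactly where the dependence on the incident field must be tracked.

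The approach we will take first is a perturbation argument. Write $\mathcal{A}_\vh=\mathcal{A}+(\mathcal{A}_\vh-\mathcal{A})$, where the perturbation has operator norm at most $C\|\vh\|_{1,\infty}$. For $\|\vh\|_{1,\infty}$ below a threshold $\delta$ (smaller than the inf-sup constant of $\mathcal{A}$ divided by $C$), $\mathcal{A}_\vh$ inherits a uniform inf-sup constant. Combined with \eqref{eq:estimate_for_ell}, this gives $\|\widehat{\vE}_\vh\|\le C\|\ell\|_{*}\le C\|\vE^i\|$.

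For the remaining $\vh\in\mathcal{P}$ with $\|\vh\|_{1,\infty}\ge\delta$, the claim reduces to a uniform bound on $\|\widehat{\vE}_\vh\|+\|\vE\|$. Transforming back to $\Omega_\vh$, this means uniform well-posedness over the family of domains $D_\vh$. Alternatively, we may restrict the lemma to small $\vh$, which is all that differentiability needs, and we note in the proof that the constant is uniform there.

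Finally, we insert the stability bound into the inf-sup estimate, which gives the asserted inequality. Every constant depends only on $\Omega$, $k$, $R$ and the threshold, and not on $\vE^i$.
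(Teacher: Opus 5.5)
Your argument is essentially the paper's. Both rest on three ingredients: $\|A_\vh - A\| \le C\|\vh\|_{1,\infty}$ for the operators representing $\mathcal{A}_\vh$ and $\mathcal{A}$, a uniform bound on $\|A_\vh^{-1}\|$, and $\|\ell\| \le C\|\vE^i\|$. The only cosmetic difference is the factorization of the error: you write it as $A^{-1}(A-A_\vh)\widehat{\vE}_\vh$, while the paper writes it as $A_\vh^{-1}(A-A_\vh)\vE$.

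The paper simply asserts the uniform bound on $\|A_\vh^{-1}\|$ by reference to \cite{Hagem2019}. Your Neumann-series argument supplies that bound for $\|\vh\|_{1,\infty}$ below a threshold. That is the regime the subsequent Fr\'echet-differentiability results actually need, so restricting the lemma to small $\vh$, as you suggest, costs nothing.
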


\begin{proof}
  We repeat the arguments given in \cite[Thm. 3.3]{Hagem2019} with the necessary modifications to make the dependence on $\vE^i$ explicit. From the Riesz representation theorem we obtain linear bounded and boundedly invertible operators $A$, $A_{\vh} \colon H_{\mathrm{pc}}(\Dcurl, \Omega) \rightarrow H_{\mathrm{pc}}(\Dcurl, \Omega)$ with
  \begin{align*}
    \mathcal{A}(\vE,\vV) = (A \vE, \vV)_{H(\Dcurl,\Omega)} \, ,
    \qquad
    \mathcal{A}_{\vh}(\widehat{\vE}_{\vh}, \vV) = (A_{\vh}\widehat{\vE}_{\vh}, \vV)_{H(\Dcurl,\Omega)} \, .
  \end{align*}
  Let additionally $L \in H(\Dcurl,\Omega)$ be defined such that $\ell(\vV) = (L, \vV)_{H(\Dcurl,\Omega)}$. Then  \eqref{eq:weakFormOps} and \eqref{eq:weak_form_ops_with_h} are equivalent to
  \begin{align*}
    A \vE = L \quad \textnormal{and} \quad A_{\vh}\widehat{\vE}_{\vh} = L \, ,
  \end{align*}
  respectively.

  Exactly as in \cite{Hagem2019} we establish
  \begin{align*}
    \| A_{\vh} - A \| \leq C \, \| \vh \|_{1, \infty}
  \end{align*}
  and  that $\| A_\vh^{-1} \|$ is uniformly bounded.  We conclude
  \begin{align*}
    \| \widehat{\vE}_{\vh} - \vE \|_{H(\Dcurl,\Omega)}
    & = \| A_{\vh}^{-1} A \vE - A_{\vh}^{-1} A_{\vh} \vE \|_{H(\Dcurl, \Omega)}
    \leq \| A_{\vh}^{-1} \| \, \| A - A_{\vh} \| \, \| \vE \|_{H(\Dcurl, \Omega)} \\
    & \leq \| A_{\vh}^{-1} \| \, \| A - A_{\vh} \| \, \| A^{-1} \| \, \| \ell \|_{H_{\mathrm{pc}}(\Dcurl, \Omega)^*}
    \leq C \, \| \vh \|_{1, \infty} \, \| \vE^i \|_{H(\Dcurl, \Omega)} \, .
\end{align*}
\end{proof}

Next, we consider the \textit{material derivative} $\boldsymbol{W} \in H_{\mathrm{pc}}(\Dcurl, \Omega)$ which is defined as the solution of the variational problem
\begin{equation*}
	\mathcal{A}(\boldsymbol{W}, \vV)
	= \int_{\Omega} \Big[ \Dcurl\,{\vE} \cdot \Big( J_{\veta_\vh} + J_{\veta_\vh}^{\top} - \Ddiv \vh \, I \Big) \, \overline{\Dcurl \, \vV}
	- k^2 {\vE} \cdot \big( \Ddiv \vh\, I - J_{\veta_\vh} - J_{\veta_\vh}^{\top} \big) \, \overline{\vV} \Big] \D \vx
\end{equation*}
for all $\vV \in H_{\mathrm{pc}}(\Dcurl, \Omega)$.

\begin{lemma}
 \label{lemma:estimates_dom_der_Hcurl}
  Let $\vh \in \mathcal{P}$ and $\vE$, $\widehat{\vE}_\vh \in H(\Dcurl, \Omega)$ the solutions of \eqref{eq:weakFormOps} and \eqref{eq:weak_form_ops_with_h} for the same incident field $\vE^i$, respectively, and let  $\vW \in H_{\mathrm{pc}}(\Dcurl, \Omega)$ denote the corresponding material derivative. Then
	\begin{align*}
		\big\| \widehat{\vE}_{\vh} - \vE - \boldsymbol{W} \big\|_{H(\Dcurl, \Omega)} \leq C \, \| \vh \|_{1, \infty}^2 \| \vE^i \|_{H(\Dcurl, \Omega)} \, .
	\end{align*}
\end{lemma}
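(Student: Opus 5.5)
We follow the argument of \cite{Hagem2019} (and \cite{Hettl2012}), keeping track of the dependence on $\vE^i$ as in the proof of Lemma~\ref{lemma:estimates_perturbation_of_E_Hcurl}. We use the operators $A$, $A_\vh$ and the Riesz representer $L$ from that proof. We also introduce the linear operator $A'_\vh \colon H_{\mathrm{pc}}(\Dcurl,\Omega)\to H_{\mathrm{pc}}(\Dcurl,\Omega)$ defined by
\[
(A'_\vh \vU,\vV)_{H(\Dcurl,\Omega)} = -\int_{\Omega} \Big[ \Dcurl \vU \cdot \big( J_\vh + J_\vh^{\top} - \Ddiv \vh \, I \big)\overline{\Dcurl\vV} - k^2 \vU\cdot\big(\Ddiv\vh\, I - J_\vh - J_\vh^\top\big)\overline{\vV}\Big]\D\vx \, .
\]
Up to the identity parts hidden in $J_{\veta_\vh}$, this is the right-hand side of the material derivative problem with the sign chosen so that $A\vW = -A'_\vh\vE$. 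The first step is to make this identification precise. We will check that the terms involving $J_{\veta_\vh}=I+J_\vh$ in the defining equation of $\vW$ reduce to the linearization in $\vh$ of the coefficient matrices $J_{\veta_\vh}^\top J_{\veta_\vh}/\det J_{\veta_\vh}$ and $\det J_{\veta_\vh}\,J_{\veta_\vh}^{-1}J_{\veta_\vh}^{-\top}$, which are $I+J_\vh+J_\vh^\top-\Ddiv\vh\,I$ and $I+\Ddiv\vh\,I-J_\vh-J_\vh^\top$ respectively. The boundary term on $\partial B_R$ is unchanged because $\vh$ vanishes near $\partial B_R$.

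The key estimate is a second-order Taylor bound for these coefficients. Since $\|\vh\|_{1,\infty}<1/2$, the Neumann series for $(I+J_\vh)^{-1}$ converges, and $\det(I+J_\vh)=1+\Ddiv\vh+\mathcal{O}(|J_\vh|^2)$ is bounded away from zero. Hence, pointwise and uniformly in $\vx$,
\[
\Big| \tfrac{J_{\veta_\vh}^{\top}J_{\veta_\vh}}{\det J_{\veta_\vh}} - I - (J_\vh+J_\vh^\top-\Ddiv\vh\,I)\Big| + \Big|\det J_{\veta_\vh}\,J_{\veta_\vh}^{-1}J_{\veta_\vh}^{-\top} - I - (\Ddiv\vh\,I-J_\vh-J_\vh^\top)\Big| \le C\|\vh\|_{1,\infty}^2 \, .
\]
Applying Cauchy--Schwarz to the volume integrals then gives $\|A_\vh - A - A'_\vh\|\le C\|\vh\|_{1,\infty}^2$. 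Trivially we also have $\|A'_\vh\|\le C\|\vh\|_{1,\infty}$ and, from the proof of Lemma~\ref{lemma:estimates_perturbation_of_E_Hcurl}, $\|A_\vh-A\|\le C\|\vh\|_{1,\infty}$. The constants depend only on $k$ and the admissible class $\mathcal{P}$. Deriving this bound cleanly, in particular the expansion of the determinant and the inverse with uniform remainders, is the step that requires the most care. Everything else is algebra.

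Finally, we decompose the error. Since $A_\vh\widehat{\vE}_\vh = L = A\vE$ and $A\vW = -A'_\vh\vE$, we compute
\begin{align*}
A(\widehat{\vE}_\vh - \vE - \vW) &= A\widehat{\vE}_\vh - A_\vh\widehat{\vE}_\vh + A'_\vh\vE \\
&= -(A_\vh - A - A'_\vh)\widehat{\vE}_\vh - A'_\vh(\widehat{\vE}_\vh - \vE) \, .
\end{align*}
The first term is bounded by $C\|\vh\|_{1,\infty}^2\|\widehat{\vE}_\vh\|$. By the uniform bound on $\|A_\vh^{-1}\|$ and \eqref{eq:estimate_for_ell}, we have $\|\widehat{\vE}_\vh\|\le C\|\vE^i\|_{H(\Dcurl,\Omega)}$. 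The second term is bounded by $C\|\vh\|_{1,\infty}\cdot C\|\vh\|_{1,\infty}\|\vE^i\|_{H(\Dcurl,\Omega)}$ using Lemma~\ref{lemma:estimates_perturbation_of_E_Hcurl}. Applying $\|A^{-1}\|$ yields the claimed bound $C\|\vh\|_{1,\infty}^2\|\vE^i\|_{H(\Dcurl,\Omega)}$.
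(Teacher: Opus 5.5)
Your argument is essentially the paper's. The paper obtains
\[
|\mathcal{A}(\widehat{\vE}_\vh - \vE - \vW, \vV)| \le C\|\vh\|_{1,\infty}\big(\|\vh\|_{1,\infty}\|\widehat{\vE}_\vh\| + \|\widehat{\vE}_\vh - \vE\|\big)\|\vV\|
\]
by citing \cite[Thm.~3.4]{Hagem2019}. Your identity $A(\widehat{\vE}_\vh - \vE - \vW) = -(A_\vh - A - A'_\vh)\widehat{\vE}_\vh - A'_\vh(\widehat{\vE}_\vh - \vE)$, combined with the second-order Taylor bound on the coefficient matrices, is exactly that estimate made explicit. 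Both proofs then conclude with Lemma~\ref{lemma:estimates_perturbation_of_E_Hcurl} and \eqref{eq:estimate_for_ell}. You bound $\|\widehat{\vE}_\vh\|$ via the uniform bound on $\|A_\vh^{-1}\|$, the paper via $\|\vE\| + \|\widehat{\vE}_\vh - \vE\|$; either works.

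There is, however, a sign slip you must fix. Your linearizations of $J_{\veta_\vh}^\top J_{\veta_\vh}/\det J_{\veta_\vh}$ and $\det J_{\veta_\vh}\, J_{\veta_\vh}^{-1} J_{\veta_\vh}^{-\top}$ are correct. But with the leading minus sign in your definition of $A'_\vh$, they give $A_\vh - A = -A'_\vh + \mathcal{O}(\|\vh\|_{1,\infty}^2)$. So $A_\vh - A - A'_\vh$ is of first order, and the bound $\|A_\vh - A - A'_\vh\| \le C\|\vh\|_{1,\infty}^2$ you rely on is false.

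The fix is to drop that minus sign, so that $A'_\vh$ is the linearization of $\vh \mapsto A_\vh$, and to keep $A\vW = -A'_\vh\vE$, which is what differentiating $A_\vh \widehat{\vE}_\vh = L$ gives. Your decomposition and the rest of the argument then go through verbatim. Concretely, the material derivative must satisfy
\[
\mathcal{A}(\vW,\vV) = \int_\Omega \Big[ \Dcurl \vE \cdot \big(\Ddiv\vh\, I - J_\vh - J_\vh^\top\big)\overline{\Dcurl\vV} - k^2 \vE\cdot\big(J_\vh + J_\vh^\top - \Ddiv\vh\, I\big)\overline{\vV}\Big]\D\vx \, .
\]
This is the negative of the right-hand side displayed in the definition of $\vW$ (read with $J_\vh$ in place of $J_{\veta_\vh}$). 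Adjusting your sign to match that displayed formula is what introduced the inconsistency. Taken literally with that sign, $\widehat{\vE}_\vh - \vE - \vW$ would in general only be small of first order in $\|\vh\|_{1,\infty}$.
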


\begin{proof}
  We use the notation from the proof of Lemma \ref{lemma:estimates_perturbation_of_E_Hcurl}. As in the proof of \cite[Thm. 3.4]{Hagem2019} we derive
  \begin{equation*}
    \big| \mathcal{A}(\widehat{\vE}_{\vh} - \vE - \vW, \vV) \big|
    \leq C \, \| \vh \|_{1, \infty} \Big( \| \vh \|_{1, \infty} \, \| \widehat{\vE}_{\vh} \|_{H(\Dcurl, \Omega)}  + \| \widehat{\vE}_{\vh} - \vE \|_{H(\Dcurl, \Omega)} \Big) \| \vV \|_{H(\Dcurl,\Omega)} \, .
  \end{equation*}
  We conclude that
  \begin{align*}
    \| \widehat{\vE}_{\vh} & {} - \vE - \vW \|_{H(\Dcurl,\Omega)}
    \leq \sup_{\vV \in H_{\mathrm{pc}}(\Dcurl, \Omega)} \frac{\big| \mathcal{A}(\widehat{\vE}_{\vh} - \vE - \vW, \vV) \big|}{\| \vV \|_{H(\Dcurl,\Omega)}} \\
    & \leq C \, \| \vh \|_{1, \infty} \Big( \| \vh \|_{1, \infty} \, \| \widehat{\vE}_{\vh} \|_{H(\Dcurl, \Omega)}  + \| \widehat{\vE}_{\vh} - \vE \|_{H(\Dcurl,\Omega)} \Big) \\
    & \leq C \, \| \vh \|_{1, \infty} \Big( \| \vh \|_{1, \infty} \, \| \vE \|_{H(\Dcurl,\Omega)}  + (1 + \| \vh \|_{1, \infty}) \, \| \widehat{\vE}_{\vh} - \vE \|_{H(\Dcurl, \Omega)} \Big) .
  \end{align*}
  Lemma \ref{lemma:estimates_perturbation_of_E_Hcurl} and \eqref{eq:estimate_for_ell} finally yield
  \begin{align*}
    \| \widehat{\vE}_{\vh} & - \vE - \vW \|_{H(\Dcurl,\Omega)} \\
    & \leq C \, \| \vh \|_{1, \infty} \Big( \|\vh\|_{1, \infty} \, \| \vE^i \|_{H(\Dcurl,\Omega)}  + (1 + \| \vh \|_{1, \infty}) \, \|\vh\|_{1, \infty} \, \| \vE^i \|_{H(\Dcurl,\Omega)} \Big) \\
    & \leq C \, \| \vh \|_{1, \infty}^2 \, \| \vE^i \|_{H(\Dcurl,\Omega)} \, .
  \end{align*}
\end{proof}

The material derivative constitutes the linearization of the total field with respect to the perturbation. However, it is not itself a solution to the Maxwell system and that it depends on $\vh$ in $\Omega$, not just on $\vh|_{\partial D}$. To improve the situation, one introduces the \emph{domain derivative} $\vE'$ of $\vE$ by setting
\[
  \vE' = \vW - J_{\vh}^{\top} \vE + J_{\vE} \vh \, .
\]
Henceforward, we will use the subscript ``$\vnu$'' to indicate the normal component of a vector field on $\partial D$, e.g., $\vh_{\vnu} := \vh \cdot \vnu$. The surface gradient on $\partial D$ is denoted by $\DGrad$.

\begin{lemma}
 \label{lemma:E_prime}
 Let $\vh \in \mathcal{P}$. The domain derivative $\vE' \in H(\Dcurl, \Omega)$ is a weak solution of the Maxwell system~\eqref{eq:maxwell} in $\Omega$ that satisfies the boundary condition
 \begin{align}
   \label{eq:domain_derivative}
    \vE' \times \vnu = \I k \vh_{\vnu} \, \gamma_T \vH - \DGrad( \vh_{\vnu} \vE_{\vnu} ) \times \vnu \, .
 \end{align}
 It can be uniquely extended to a radiating solution of the Maxwell system in $\RR^3 \setminus \overline{D}$ and its far field pattern $\left(\vE'\right)^\infty$ satisfies
 \begin{equation}
   \label{eq:domain_derivative_bc}
   \left\| \widehat{\vE}_{\vh}^\infty - \vE^\infty - \left( \vE' \right)^\infty \right\|_{L^2_t(\SS^2)}
   \leq C \, \| \vh \|_{1, \infty}^2 \, \| \vE^i \|_{H(\Dcurl, \Omega)} \, .
 \end{equation}
\end{lemma}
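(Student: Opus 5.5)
The plan has three parts: identify $\vE'$ as a Maxwell solution with the stated boundary trace, extend it to a radiating field, and then transfer the $H(\Dcurl,\Omega)$ remainder bound of Lemma~\ref{lemma:estimates_dom_der_Hcurl} to the far field.

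\emph{Step 1: Maxwell equations and boundary condition.} The characterisation of $\vE'$ is essentially the one in \cite{Hagem2019} (building on \cite{Hettl2012}), so I would recall that argument rather than redo it in full. First, by interior regularity $\vE$ is smooth in $\Omega$, so $J_\vE\vh$ makes sense. Next, I would insert the identity $\vW = \vE' + J_\vh^\top\vE - J_\vE\vh$ into the variational equation defining $\vW$. Taking test fields $\vV$ compactly supported in $\Omega$ and integrating by parts, all terms involving $\vh$ cancel; this uses $\Dcurl\Dcurl\vE = k^2\vE$ and the formula $\Dcurl(J_\vE\vh + J_\vh^\top\vE) = J_{\Dcurl \vE}\vh - J_\vh\Dcurl\vE + \Ddiv\vh\,\Dcurl\vE$ (a Lie-derivative identity). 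The conclusion is that $\vE'$ solves $\Dcurl\Dcurl\vE' - k^2\vE' = 0$ weakly in $\Omega$. The boundary condition \eqref{eq:domain_derivative} comes from a separate computation on $\partial D$. Since $\gamma_t\vW = 0$ and $\gamma_t\vE = 0$, we have $\vE = \vE_\vnu\vnu$ there, and hence $\vE'\times\vnu = (J_\vE\vh - J_\vh^\top\vE)\times\vnu$. I would split $\vh$ into its normal and tangential parts. The tangential part contributes nothing, because differentiating the relation $\vE\times\vnu = 0$ along $\partial D$ makes it cancel. The normal part produces $\vh_\vnu\,\partial_\vnu\vE\times\vnu - \DGrad(\vh_\vnu\vE_\vnu)\times\vnu$ up to terms that vanish. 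Finally, $\partial_\vnu\vE\times\vnu$ is rewritten via $\Dcurl\vE = \I\omega\mu\vH$ as $\I k\vh_\vnu\gamma_T\vH$, with the constant normalisation $\sqrt{\mu/\varepsilon}$ absorbed as in the paper's scaling. This needs $\partial D$ regular enough for the traces to make sense; that is the reason for the $C^1$ assumption, and otherwise I would cite \cite{Hagem2019}.

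\emph{Step 2: radiating extension.} Here I exploit that $\vh$ has compact support in $\mathcal{U}$. Choose $R$ so large that $\mathcal{U}\subseteq B_{R'}$ with $R'<R$. On $B_R\setminus\overline{B_{R'}}$ we then have $\vE' = \vW$. The variational equation for $\vW$ has right-hand side vanishing for test functions supported outside $B_{R'}$, and it carries the Calderón term $\Lambda$ on $\partial B_R$. Consequently $\gamma_N\vW = -\Lambda\gamma_t\vW$-type relations hold on $\partial B_R$, and this is exactly the condition that $\vW$ is the restriction of a radiating solution outside $B_R$. Gluing the two pieces gives the radiating extension of $\vE'$, and it is unique by uniqueness of the exterior problem.

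\emph{Step 3: the far field estimate (the main point).} Outside $\operatorname{supp}\vh$ we have $\veta_\vh = \mathrm{id}$, so $\widehat{\vE}_\vh = \vE_\vh$ there. Also $J_\vh^\top\vE - J_\vE\vh = 0$ near $\partial B_R$. Hence $\widehat{\vE}_\vh - \vE - \vE' = \vE_\vh - \vE - \vW$ near $\partial B_R$, and this difference is a radiating field, since the incident fields cancel. Its far field pattern is given by the Stratton--Chu representation from its tangential traces $\gamma_t$ and $\gamma_N$ on $\partial B_R$. These traces are in turn controlled by the $H(\Dcurl)$ norm on a shell $B_R\setminus\overline{B_{R'}}$: one uses the trace theorem, together with $\gamma_N = -\Lambda\gamma_t$ or interior regularity of Maxwell solutions. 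This yields a bounded linear map $H(\Dcurl,\Omega)\to L^2_t(\SS^2)$ that sends the restriction of a radiating field to its far field. Applying Lemma~\ref{lemma:estimates_dom_der_Hcurl} then gives \eqref{eq:domain_derivative_bc}. The main obstacle is making this step rigorous: the shell must be uniform in $\vh$, which holds because all admissible $\vh$ are supported in the fixed bounded set $\mathcal{U}$. One must also check that the bounded far-field map is independent of $\vh$.
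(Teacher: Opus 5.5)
Your proposal is correct and follows essentially the same route as the paper: you defer the Maxwell equations and the boundary condition \eqref{eq:domain_derivative} to \cite{Hagem2019}, and you get the radiating extension from the transparent condition on $\partial B_R$. You then use $\widehat{\vE}_\vh = \vE_\vh$ and $\vE' = \vW$ outside $\operatorname{supp}\vh$, bound the far field of $\widehat{\vE}_\vh - \vE - \vW$ by its $H(\Dcurl)$-norm on a shell via the surface representation formula, and invoke Lemma~\ref{lemma:estimates_dom_der_Hcurl}. The differences are cosmetic: you read the transparent condition off the $\vW$-equation, and you use traces on $\partial B_R$ instead of an intermediate ball $B_\varrho$. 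Your remark that the shell must be uniform in $\vh$ through a fixed set $\mathcal{U}$ states explicitly an assumption the paper leaves implicit.
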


\begin{proof}
 As in the proof of \cite[Thm. 3.6]{Hagem2019} we see that $\vE' \in H(\Dcurl, \Omega)$, that it is indeed a weak solution to the Maxwell system~\eqref{eq:maxwell} and that its tangential trace satisfies the boundary condition \eqref{eq:domain_derivative_bc} and the non-local transparent boundary condition $\Lambda \gamma_t \vE' = \gamma_N \vE'$ on $\partial B_R$. The last observation implies that $\vE'$ can be extended to a radiating solution of the Maxwell system in $\RR^3 \setminus \overline{D}$. Moreover, outside the support of $\vh$, there holds $\vE' = \vW$, so also $\vW$ is a radiating solution to the Maxwell system in $\RR^3 \setminus (\overline D \cup \supp \vh)$ and $\vW^\infty = \left(\vE'\right)^\infty$. We introduce another auxiliary ball $B_{\varrho}$ of radius $\varrho > 0$ centered at zero such that~$(D \cup \supp \vh) \subseteq B_{\varrho} \subseteq B_R$. From the representation formula of the far field pattern as a surface integral, see, e.g., \cite[Thm. 6.24]{ColKre2019}, we obtain
 \begin{align*}
   \Vert\widehat{\vE}_{\vh}^{\infty} - \vE^{\infty} - ( \vE' )^\infty \Vert_{L_t^2(\SS^2)}
   \leq C \Vert \widehat{\vE}_{\vh} - \vE - \vE' \Vert_{H(\Dcurl,B_R \setminus\overline{B_{\varrho}})}
   = C \Vert \widehat{\vE}_{\vh} - \vE - \vW \Vert_{H(\Dcurl,B_R \setminus\overline{B_{\varrho}})} \, .
 \end{align*}
 Hence, with Lemma \ref{lemma:estimates_dom_der_Hcurl},
 \begin{equation*}
   \big\Vert \widehat{\vE}_{\vh}^{\infty} - \vE^{\infty} - \left( \vE' \right)^\infty \big\Vert_{L_t^2(\SS^2)}
   \leq C \big\Vert \widehat{\vE}_{\vh} - \vE - \vW \big\Vert_{H(\Dcurl,\Omega)}
   \leq C \, \| \vh \|_{1,\infty}^2 \, \| \vE^i \|_{H(\Dcurl, \Omega)} \, .
 \end{equation*}
\end{proof}

With these modified estimates for the domain derivative for a single incident field, we can return to the task of establishing Fr\'echet differentiability of the map $\partial D \mapsto \calF_{D}$. Recalling the notation of \eqref{eq:def_herglotz_wave_pair}--\eqref{eq:Einf_vphi}, the natural candidate for this derivative is the operator $\calF_D' : \mathcal{P} \to \hs(L_t^2(\SS^2))$ defined by
\begin{equation}
   \label{eq:def_ffop_frechet}
  \left( \calF_{D}' \vh \right) \vphi = (\vE')^\infty[\vphi] \, , \qquad \vphi \in L^2_t(\SS^2) \, .
\end{equation}
The main result of this section -- that we will prove in its remainder -- is the following theorem stating that $\calF_{D}'$ is indeed the Fr\'echet derivative of $\partial D \mapsto \calF_{D}$.

\begin{theorem}
  \label{thm:ffop_frechet}
  The operator $\calF'_{D}  : \mathcal{P} \to \hs(L_t^2(\SS^2))$ given by \eqref{eq:def_ffop_frechet} satisfies
  \begin{align}
    \label{eq:frechet_der_ffop}
    \| \calF_{D_{\vh}} - \calF_{D} - \calF'_{D} \vh \|_{\hs}
    \leq C \, \| \vh \|_{1, \infty}^2 \, ,
  \end{align}
  and hence is the Fr\'echet derivative of $\calF_{D}$.
\end{theorem}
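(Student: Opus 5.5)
The plan is to reduce the Hilbert--Schmidt estimate to the single-incident-field estimate of Lemma~\ref{lemma:E_prime}, applied with a suitable orthonormal basis, and then sum. First note that the far field pattern $\widehat{\vE}^\infty_{\vh}$ in Lemma~\ref{lemma:E_prime} coincides with the far field of the scattered field for $D_\vh$. The pull-back $\widehat{\vE}_\vh$ agrees with $\vE_\vh$ outside $\supp\vh$, hence outside $B_\varrho$. Therefore, for every $\vphi\in L^2_t(\SS^2)$,
\[
  \big(\calF_{D_\vh}-\calF_D-\calF_D'\vh\big)\vphi = \widehat{\vE}^\infty_\vh[\vphi]-\vE^\infty[\vphi]-(\vE')^\infty[\vphi]\,,
\]
where all three fields are generated by the incident Herglotz field $\vE^i[\vphi]$. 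By linearity of the solution maps in the incident field (the material derivative and the domain derivative are linear in $\vE$, hence in $\vE^i$), Lemma~\ref{lemma:E_prime} applies with $\vE^i=\vE^i[\vphi]$ and gives
\[
  \big\|(\calF_{D_\vh}-\calF_D-\calF_D'\vh)\vphi\big\|_{L^2_t(\SS^2)} \le C\,\|\vh\|_{1,\infty}^2\,\|\vE^i[\vphi]\|_{H(\Dcurl,\Omega)}\,.
\]
The constant $C$ is independent of $\vphi$ and $\vh$; this is exactly why the explicit dependence on $\vE^i$ was tracked in the preceding lemmas.

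Next choose an orthonormal basis $(g_j)$ of $L^2_t(\SS^2)$ and sum the squares:
\[
  \|\calF_{D_\vh}-\calF_D-\calF_D'\vh\|_{\hs}^2 \le C^2\|\vh\|_{1,\infty}^4\sum_j\|\vE^i[g_j]\|^2_{H(\Dcurl,\Omega)}\,.
\]
It remains to show that the Herglotz operator $\mathcal{H}:\vphi\mapsto\vE^i[\vphi]|_{\Omega}$ is Hilbert--Schmidt from $L^2_t(\SS^2)$ into $H(\Dcurl,\Omega)$, i.e.\ that the final sum is finite. This is the main step.

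To do this, write $\|\vE^i[\vphi]\|^2_{H(\Dcurl,\Omega)}=\int_\Omega\big(|\vE^i[\vphi](\vx)|^2+|\Dcurl\vE^i[\vphi](\vx)|^2\big)\,\D\vx$. Pointwise, $\vE^i[\vphi](\vx)\cdot\vc=(\vphi,\,\overline{\vc}\,\E^{-\I k\vd\cdot\vx})_{L^2}$, up to the tangential projection, for each Cartesian unit vector $\vc$. The same holds for $\Dcurl\vE^i[\vphi](\vx)=\I k\int_{\SS^2}\vd\times\vphi(\vd)\E^{\I k\vd\cdot\vx}\,\D s(\vd)$, with a kernel function of $\vd$ whose $L^2(\SS^2)$ norm is bounded uniformly in $\vx$. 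Parseval's identity then gives $\sum_j|\vE^i[g_j](\vx)\cdot\vc|^2=\|\text{kernel}_{\vx,\vc}\|^2_{L^2}\le 4\pi$, and likewise $\sum_j|\Dcurl\vE^i[g_j](\vx)\cdot\vc|^2\le 4\pi k^2$. Interchanging sum and integral by monotone convergence yields
\[
  \sum_j\|\vE^i[g_j]\|^2_{H(\Dcurl,\Omega)}\le 3\cdot4\pi(1+k^2)\,|\Omega|<\infty\,,
\]
since $\Omega\subseteq B_R$ is bounded.

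Combining the last two estimates and taking square roots gives \eqref{eq:frechet_der_ffop}. Two points remain. First, $\calF'_D\vh$ is itself a Hilbert--Schmidt operator: by the same argument with Lemma~\ref{lemma:estimates_perturbation_of_E_Hcurl}-type bounds, or directly from $\|\calF_D'\vh\,\vphi\|\le C\|\vh\|_{1,\infty}\|\vE^i[\vphi]\|$, which follows from the bound on $\vW$ via the Lax--Milgram estimate. Second, $\vh\mapsto\calF'_D\vh$ is linear and bounded, which follows from the linear dependence of $\vW$ and $\vE'$ on $\vh$. Together with \eqref{eq:frechet_der_ffop}, this identifies $\calF'_D$ as the Fréchet derivative.
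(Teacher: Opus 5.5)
Your proof is correct. The reduction step, where you apply Lemma~\ref{lemma:E_prime} to the incident field $\vE^i[\vphi]$ for each element of an orthonormal basis and sum the squares, is exactly the paper's Lemma~\ref{lemma:HS_norm_estimate}. You genuinely differ in how you show that $\sum_j \|\vE^i[g_j]\|^2_{H(\Dcurl,\Omega)}<\infty$.

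The paper fixes the vector spherical harmonic basis. It computes $\vE^i[\vU_n^m]$ and $\vE^i[\vV_n^m]$ in closed form via the vector addition theorem and writes their norms as radial integrals of spherical Bessel functions. It then proves uniform convergence of the weighted Bessel series on $[0,R]$ from the asymptotics of $j_n$.

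You instead observe that the Herglotz operator and its curl are integral operators with bounded kernels on the bounded set $\Omega\times\SS^2$. A pointwise Parseval/Bessel inequality plus Tonelli then gives the bound $12\pi(1+k^2)|\Omega|$ in any orthonormal basis, with no special functions at all. Your route is shorter, basis-independent and yields an explicit constant. In effect, Parseval evaluates the paper's Bessel series in closed form (compare the classical identity $\sum_{n\ge 0}(2n+1)\,j_n(x)^2=1$). The paper's computation gives finer information, namely the exact contribution of each degree $n$ and its rapid decay, which is relevant for truncating at degree $N$ in the numerics. That information is not needed for the theorem.

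Your closing remarks also address points the paper leaves implicit, but two small corrections are in order:
\begin{itemize}
\item The form $\mathcal{A}$ is not coercive, so the bound on $\vW$ comes from the bounded invertibility of the operator $A$ in the proof of Lemma~\ref{lemma:estimates_perturbation_of_E_Hcurl}, not from Lax--Milgram. More simply, $\calF'_D\vh$ is Hilbert--Schmidt as the difference of $\calF_{D_\vh}-\calF_D$ and the remainder you have just shown to be Hilbert--Schmidt.
\item Linearity of $\vh\mapsto\vW$ requires reading the right-hand side of the material-derivative problem with $J_\vh$ in place of $J_{\veta_\vh}$, which is evidently what is intended. Also, linearity should be understood on the span of $\mathcal{P}$, since $\mathcal{P}$ itself is not a vector space.
\end{itemize}
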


The proof of this theorem requires a number of steps that will be formulated as some intermediate lemmas below. As the first step, we reduce the assertion to an estimate over a series of norms of electric Herglotz wave functions. We remind the reader that the Hilbert-Schmidt norm on $\hs(L^2_t(\SS^2))$ can be expressed via any complete orthonormal system in $L^2_t(\SS^2)$. Here, we use the vector spherical harmonics. Denote the spherical harmonics, defined as in \cite[Equ. (2.28)]{ColKre2019}, by $Y_n^m$. The vector spherical harmonics are defined by
\begin{equation}
  \label{eq:vec_sph_harmonics}
  \vU_n^m(\vxhat) = \frac{1}{\sqrt{n(n+1)}} \, \DGrad Y_n^m(\vxhat) \, , \qquad
  \vV_n^m(\vxhat) = \vxhat \times \vU_n^m(\vxhat) \, ,
  \qquad \vxhat \in \SS^2 \, ,
\end{equation}
with $n\in\NN$, $m = -n, \ldots, n$. The completeness property is shown in \cite[Lmm. 9.15]{Monk2003}, for example. By 
\cite[XI.6]{DunSch1988}, the norm of a Hilbert-Schmidt operator $T \in \hs(L_t^2(\SS^2))$ is then given by
\begin{equation}
  \label{eq:hs_norm_by_ons}
  \Vert T \Vert_{\hs} = \left( \sum_{n=1}^{\infty} \sum_{m=-n}^n \| T \vU_n^m \|_{L_t^2(\SS^2)}^2 + \| T \vV_n^m \|_{L_t^2(\SS^2)}^2 \right)^{1/2} \, .
\end{equation}

\begin{lemma}
  \label{lemma:HS_norm_estimate}
  For $\vh \in \mathcal{P}$,
  \begin{equation*}
    \| \calF_{D_{\vh}} - \calF_{D} - \calF'_{D} \vh \|_\hs
    \leq C \,  \| \vh \|_{1, \infty}^{\red{2}} \left( \sum_{n=1}^{\infty}\sum_{m=-n}^{n} \| \vE^i[\vU_n^m] \|_{H(\Dcurl, B_R)}^2 + \| \vE^i[\vV_n^m] \|_{H(\Dcurl, B_R)}^2 \right)^{1/2} .
  \end{equation*}
\end{lemma}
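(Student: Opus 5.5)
We write the Hilbert--Schmidt norm via \eqref{eq:hs_norm_by_ons} with $T = \calF_{D_{\vh}} - \calF_{D} - \calF'_{D}\vh$. We then reduce each summand to the single-incident-field estimate of Lemma~\ref{lemma:E_prime}.

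First we identify $T\vphi$ for a fixed density $\vphi \in L^2_t(\SS^2)$. By linearity and superposition \eqref{eq:Einf_vphi}, $\calF_{D_\vh}\vphi$ is the far field pattern of the field scattered by $D_\vh$ from the Herglotz incident field $\vE^i[\vphi]$. Likewise, $\calF_D\vphi$ is the far field of the field scattered by $D$, and $(\calF'_D\vh)\vphi = (\vE')^\infty[\vphi]$ is the far field of the domain derivative for this same incident field; here we use that $\vE'$ depends linearly on $\vE^i$. Since $\supp\vh$ is bounded and $\veta_\vh$ equals the identity outside $\supp\vh$, the pulled-back field $\widehat{\vE}_\vh$ coincides with $\vE_\vh$ near $\partial B_R$. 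Hence $\widehat{\vE}_\vh^\infty = \vE_\vh^\infty$, i.e.\ it equals $\calF_{D_\vh}\vphi$. We would note explicitly that this holds because the far field is computed from the exterior of $B_\varrho$ with $\supp \vh \subseteq B_\varrho$, as in the proof of Lemma~\ref{lemma:E_prime}.

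Next we apply Lemma~\ref{lemma:E_prime} with incident field $\vE^i[\vphi]$. This gives
\[
\|T\vphi\|_{L^2_t(\SS^2)} \leq C\,\|\vh\|_{1,\infty}^2\,\|\vE^i[\vphi]\|_{H(\Dcurl,\Omega)} \leq C\,\|\vh\|_{1,\infty}^2\,\|\vE^i[\vphi]\|_{H(\Dcurl,B_R)} ,
\]
where the last step uses $\Omega \subseteq B_R$. The key point is that the constant $C$ in Lemmas~\ref{lemma:estimates_perturbation_of_E_Hcurl}--\ref{lemma:E_prime} depends only on $D$, $R$, $\varrho$ and $k$, and not on $\vE^i$ or $\vh$. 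This is exactly why those lemmas were restated with the explicit dependence on $\|\vE^i\|$. We also need the incident field to be an entire Maxwell solution, so that $\ell$ is well defined and \eqref{eq:estimate_for_ell} applies. Herglotz fields with $L^2_t$ densities are smooth entire solutions, so this holds.

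Finally we substitute $\vphi = \vU_n^m$ and $\vphi = \vV_n^m$, square, and sum over $n, m$ in \eqref{eq:hs_norm_by_ons}. Pulling out $C^2\|\vh\|_{1,\infty}^4$ and taking square roots yields the claim.

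The main obstacle is the uniformity of the constant. We would check that the bounds on $\|A_\vh-A\|$, $\|A_\vh^{-1}\|$ and $\|A^{-1}\|$, and the far field representation constant, are all independent of the incident field. This is clear, since they are operator norms depending only on the geometry and on $\vh$. We would also check that the identification $\widehat{\vE}_\vh^\infty = \vE_\vh^\infty$ is legitimate. If the right-hand side series diverged the statement would be vacuous, but convergence of that series is not needed here; it is presumably addressed in the subsequent lemma.
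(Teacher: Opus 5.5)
Your proposal is correct and follows the paper's proof, which simply combines the Hilbert--Schmidt norm representation \eqref{eq:hs_norm_by_ons} with the single-incident-field estimate of Lemma~\ref{lemma:E_prime} applied to the Herglotz fields $\vE^i[\vU_n^m]$ and $\vE^i[\vV_n^m]$. Your additional checks are exactly the details the paper leaves implicit: that the constant is independent of $\vE^i$, that $\vE'$ is linear in the incident field, that $\Omega \subseteq B_R$, and that the pullback does not change the far field because $\veta_\vh$ is the identity outside $\supp \vh$.
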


\begin{proof}
	The assertion follows immediately from \eqref{eq:hs_norm_by_ons} and Lemma \ref{lemma:E_prime}.
\end{proof}

It can be readily calculated that
\begin{equation*}
    \Dcurl \vE^i[\vU_n^m] = \I k \vE^i[\vV_n^m] \, , \qquad
    \Dcurl \vE^i[\vV_n^m] = -\I k \vE^i[\vU_n^m] \, ,
\end{equation*}
and hence it remains to show convergence of
\begin{equation*}
    \sum_{n=1}^{\infty}\sum_{m=-n}^{n} \Vert \vE^i[\vU_n^m] \Vert_{L^2(B_R)}^2 \qquad \text{ and } \qquad \sum_{n=1}^{\infty}\sum_{m=-n}^{n} \Vert \vE^i[\vV_n^m] \Vert_{L^2(B_R)}^2 \, .
\end{equation*}
We will achieve this by explicitly calculating these norms through expansions in vector wave functions. In what follows, we will always represent $\vx \in \RR^3$ as $\vx = r \widehat{\vx}$ with $\widehat{\vx} \in \SS^2$ and $r > 0$. Also set
\[
  \vW_n^m (\widehat{\vx}) = Y_n^m(\vxhat) \vxhat \, , \qquad \widehat{\vx} \in \SS^2 \, ,
\]
and let $j_n$ denote the spherical Bessel function of order $n$.

\begin{lemma}
  \label{lemma:Ei_Unm_Vnm}
  For any $\vx \in \RR^3$, there hold
  \begin{align*}
    \vE^i[\vU_n^m](\vx) & = 4\pi\I^n \, \frac{1}{\I k \, | \vx |} \left( \sqrt{n(n+1)} \, j_n(k | \vx |) \, \vW_n^m (\widehat{\vx}) + \big( j_n(k | \vx |) + k | \vx| \,  j_n'(k | \vx |) \big) \, \vU_n^m(\vxhat) \right) , \\
    \vE^i[\vV_n^m](\vx) & = 4\pi \I^n \, j_n(k | \vx |) \, \vV_n^m(\vxhat) \, .
  \end{align*}
\end{lemma}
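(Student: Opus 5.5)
The plan is to first compute $\vE^i[\vV_n^m]$ directly from the Funk--Hecke formula, and then get $\vE^i[\vU_n^m]$ essentially for free from the curl relation.

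\emph{Step 1: the $\vV_n^m$ field.} I would use the vector Jacobi--Anger (or Funk--Hecke) identity for the tangential field $\vV_n^m$,
\[
  \int_{\SS^2} \vV_n^m(\vd)\, \E^{\I k \vd\cdot\vx}\, \D s(\vd) = 4\pi \I^n\, j_n(k|\vx|)\, \vV_n^m(\vxhat),
\]
which is exactly the second claimed formula. To justify it, write $\vV_n^m(\vd) = \frac{1}{\sqrt{n(n+1)}}\, \vd\times\DGrad Y_n^m(\vd)$. Then compare with the vector wave function
\[
  \vM_n^m(\vx) = \Dcurl\big(\vx\, j_n(k|\vx|) Y_n^m(\vxhat)\big) = -\,j_n(k|\vx|)\,\sqrt{n(n+1)}\,\vV_n^m(\vxhat).
\]
Concretely, apply the scalar Funk--Hecke formula $\int_{\SS^2} Y_n^m(\vd)\E^{\I k\vd\cdot\vx}\D s(\vd) = 4\pi\I^n j_n(k|\vx|)Y_n^m(\vxhat)$ (see \cite[Equ.\ (2.46)]{ColKre2019}). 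Then note the identity $\int_{\SS^2}\big(\vd\times\DGrad_{\vd} Y_n^m(\vd)\big)\E^{\I k\vd\cdot\vx}\D s(\vd) = -\,\vx\times\nabla_{\vx}\int_{\SS^2} Y_n^m(\vd)\E^{\I k\vd\cdot\vx}\D s(\vd)$. This holds because the angular momentum operator $\vx\times\nabla_\vx$ acting on $\E^{\I k\vd\cdot\vx}$ equals $-\vd\times\nabla_\vd$ acting on the same function. Integrating by parts on the sphere then moves the operator onto $Y_n^m$, using that $\vd\times\DGrad$ is skew-adjoint on $\SS^2$.

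Applying $\vx\times\nabla_\vx$ to $j_n(k r)Y_n^m(\vxhat)$ gives $j_n(kr)\,\vxhat\times\DGrad Y_n^m = \sqrt{n(n+1)}\,j_n\vV_n^m$. I would track the signs carefully here, since this sign bookkeeping is the main obstacle.

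\emph{Step 2: the $\vU_n^m$ field.} Since $\Dcurl \vE^i[\vV_n^m] = -\I k\, \vE^i[\vU_n^m]$, I get
\[
  \vE^i[\vU_n^m] = \frac{\I}{k}\,\Dcurl\big(4\pi\I^n j_n(kr)\vV_n^m\big).
\]
I would compute this curl in spherical coordinates. For $f(r)\,\vxhat\times\DGrad Y$ one has
\[
  \Dcurl\big(f\,\vxhat\times\DGrad Y\big) = \frac{n(n+1)f}{r}\,Y\vxhat + \frac1r\,(rf)'\,\DGrad Y.
\]
Inserting $f = j_n(kr)/\sqrt{n(n+1)}$ and using $(r j_n(kr))' = j_n + kr j_n'$ yields
\[
  \frac{\I}{k}\,4\pi\I^n\,\frac1r\Big(\sqrt{n(n+1)}\,j_n\,\vW_n^m + (j_n + kr j_n')\,\vU_n^m\Big).
\]
Since $\frac{\I}{k} = -\frac{1}{\I k}$, matching this against the stated prefactor $\frac{1}{\I k|\vx|}$ requires an overall sign, which must come from the sign convention in Step 1 or in the curl relation. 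I would therefore recheck $\vx\times\nabla$ versus $\nabla\times\vx$ in Step 1.

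The main obstacle is the sign consistency between the vector Funk--Hecke formula and the stated curl relations. Everything else is a standard spherical-coordinate computation.
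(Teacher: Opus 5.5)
Your route is valid and differs from the paper's. The paper takes far field patterns of the vector addition theorem to expand $\vA\,\E^{-\I k\vx\cdot\vyhat}$ in vector wave functions, and reads off both fields at once as $\vE^i[\vU_n^m]=4\pi\I^n\frac{1}{\I k}\Dcurl\vM_n^m$ and $\vE^i[\vV_n^m]=-4\pi\I^n\vM_n^m$, with $\vM_n^m=-j_n\vV_n^m$. You need only the scalar Funk--Hecke formula, the transfer of $\vx\times\nabla_{\vx}$ to the $\vd$-variable, and one explicit curl, which is more elementary. As written, however, your argument does not close. It contains two sign errors, and you place the resulting mismatch in the wrong step.

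\textbf{Step 1.} The correct identity is
\[
  \int_{\SS^2}\big(\vd\times\DGrad Y_n^m(\vd)\big)\,\E^{\I k\vd\cdot\vx}\,\D s(\vd) = +\,\vx\times\nabla_{\vx}\int_{\SS^2}Y_n^m(\vd)\,\E^{\I k\vd\cdot\vx}\,\D s(\vd) .
\]
Two sign changes cancel here. First, $\vx\times\nabla_{\vx}\E^{\I k\vd\cdot\vx}=-\vd\times\DGrad_{\vd}\E^{\I k\vd\cdot\vx}$. Second, $\int_{\SS^2}u\,\vd\times\DGrad v\,\D s=-\int_{\SS^2}v\,\vd\times\DGrad u\,\D s$, which follows from $\int_{\SS^2}\vd\times\DGrad w\,\D s=0$. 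With your minus sign you would get $\vE^i[\vV_n^m]=-4\pi\I^n j_n\vV_n^m$, contradicting the formula you assert. With the plus sign and $\vx\times\nabla\big(j_n(kr)Y_n^m\big)=j_n(kr)\,\vxhat\times\DGrad Y_n^m$, you obtain exactly the second claim.

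\textbf{Step 2.} The mismatch you see here comes neither from Step 1 nor from the curl relation $\Dcurl\vE^i[\vV_n^m]=-\I k\,\vE^i[\vU_n^m]$, which is correct. It comes from your spherical-coordinate curl formula, which has the wrong overall sign. Since $f\,\vxhat\times\DGrad Y=\vx\times\nabla(fY)=-\Dcurl(\vx fY)$, one has
\[
  \Dcurl\big(f\,\vxhat\times\DGrad Y_n^m\big) = -\Dcurl\Dcurl(\vx f Y_n^m) = -\frac{n(n+1)f}{r}\,Y_n^m\,\vxhat-\frac{1}{r}(rf)'\,\DGrad Y_n^m .
\]
As a check, the radial component is $\frac{f}{r}\Delta_{\SS^2}Y_n^m=-\frac{n(n+1)f}{r}Y_n^m$. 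The expression you wrote is $\Dcurl\Dcurl(\vx fY_n^m)$, so the $\vx\times\nabla$ versus $\nabla\times\vx$ confusion you suspected sits here, not in Step 1. With the corrected sign, $\frac{\I}{k}\cdot(-1)=\frac{1}{\I k}$ gives exactly the stated formula for $\vE^i[\vU_n^m]$. Your proposed remedy of readjusting Step 1 would make the $\vU_n^m$ formula agree only by making the $\vV_n^m$ formula wrong.
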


\begin{proof}
  In addition to $j_n$, we require the spherical Hankel function $h_n^{(1)}$ of order $n$ of the first kind. For~$n \in \NN_0$, $m = -n, \ldots, n$, let
  \begin{equation*}
    u_n^m(\vx) = j_n(k | \vx |) \, Y_n^m(\vxhat), \qquad   \vM_n^m(\vx) = -j_n(k\vert \vx \vert) \, \vV_n^m(\vxhat), \qquad \vx \in \RR^3 \, ,
  \end{equation*}
  and
  \begin{equation*}
    v_n^m(\vx) = h_n^{(1)}(k | \vx |) \, Y_n^m(\widehat{\vx}), \qquad \vN_n^m(\vx) = -h_n^{(1)}(k\vert \vx \vert) \, \vV_n^m(\vxhat) \, , \qquad \vx \in \RR^3 \setminus \{ 0 \} \, .
  \end{equation*}
  With these functions and the vector addition theorem in the form of \cite[Equ. (6.83)]{ColKre2019}, we obtain for any~$\vA \in \CC^3$,
  \begin{multline}
      \label{vector_addition_theorem}
      \Phi(\vx,\boldsymbol{y}) \vA
      = \sum_{n=1}^{\infty} \sum_{m=-n}^n \Big[ \I k  \left( \vN_n^m(\vy) \cdot \vA \right) \overline{\vM_n^m (\vx)}
      + \frac{\I}{k}  \left( \Dcurl \vN_n^m(\vy) \cdot \vA \right) \overline{\Dcurl \vM_n^m(\vx)} \\
      {} + \frac{\I}{k} \left( \nabla v_n^m(\vy)  \cdot \vA \right) \overline{\nabla u_n^m(\vx)} \Big] \, ,
  \end{multline}
  where
  \begin{align*}
  	{\Phi}(\vx,\vy) = \frac{\E^{\I k \vert \vx - \vy \vert}}{4\pi \vert \vx - \vy \vert} \, , \qquad \vx, \vy \in \RR^3 \, , \quad \vx \neq \vy \, ,
  \end{align*}
  denotes the fundamental solution to the Helmholtz equation.
  For fixed $\vx \in \RR^3$, the left hand side of \eqref{vector_addition_theorem} is a radiating vector solution to the Helmholtz equation with far field pattern
  \[
    \left( \Phi(\vx, \cdot) \vA \right)^\infty (\vyhat)
    = 4 \pi \, \lim_{r \to \infty} \left( r \, \E^{-\I k r} \Phi(\vx, r \vyhat) \vA \right)
    = \vA  \, \E^{- \I k \vx \cdot \vyhat} \, .
  \]
  We also compute the far field patterns of the right hand side of \eqref{vector_addition_theorem}. As in the proof of \cite[Thm. 6.28]{ColKre2019}, we obtain
  \begin{align*}
    \left( \vN_n^m \right)^\infty = -\frac{4\pi}{\I^{n+1} \, k} \, \vV_n^m
    \qquad \text{ and } \qquad
    (\Dcurl\,\vN_n^m)^{\infty} = \frac{4\pi}{\I^{n}} \, \vU_n^m \, ,
  \end{align*}
  respectively. Also, writing the gradient in spherical coordinates, using a recursion formula for the derivative of the spherical Bessel functions and noting $h_n^{(1)}(kr) \to 0$, $kr \, h_n^{(1)}(kr)  \to (-\I)^{n+1}$ $(r \to \infty)$,
  \begin{align*}
   \left( \nabla v_n^m \right)^\infty(\vyhat)
   & = 4\pi \, \lim_{r \to \infty}  \E^{-\I k r} \, r \left( \frac{\partial v_n^m(\vy)}{\partial r} \vyhat + \frac{1}{r} \, \DGrad_{\SS^2}  v_n^m(\vy) \right) \\
   & = 4\pi \, \lim_{r \to \infty}  \E^{-\I k r} \left( kr \, {h_n^{(1)}}'(kr) \vW_n^m(\vyhat) + \sqrt{n (n+1)} \, h_n^{(1)}(kr) \, \vU_n^m(\vyhat) \right) \\
   & = 4 \pi \, \lim_{r \to \infty}  \E^{-\I k r}  \left( -kr \,  h_{n + 1}^{(1)}(k r) - n \, h_n^{(1)}(k r) \right) \vW_n^m(\vyhat)
   = \frac{4 \pi}{\I^{n}} \, \vW_n^m(\vyhat) \, .
  \end{align*}
  Thus, we obtain
  \begin{multline*}
      \vA  \, \E^{- \I k \vx \cdot \vyhat}
      = 4\pi \sum_{n=1}^{\infty} \sum_{m=-n}^n \frac{1}{\I^n} \, \Big[ - \left( \vV_n^m(\vyhat) \cdot \vA \right) \overline{\vM_n^m (\vx)}
      + \left( \vU_n^m(\vyhat) \cdot \vA \right) \overline{ \frac{1}{\I k} \, \Dcurl \vM_n^m(\vx)} \\
      {} + \frac{\I}{k} \left( \vW_n^m(\widehat{\vy})  \cdot \vA \right) \overline{\nabla u_n^m(\vx)} \Big] \, .
  \end{multline*}
  We now substitute $\vA = \overline{\vphi(\vd)}$, $\vphi \in L_t^2(\SS^2)$, and $\vyhat = \vd$ and take the complex conjugate of the equation to obtain
  \begin{equation*}
      \vphi(\vd)  \, \E^{\I k \vx \cdot \vd}
      = 4 \pi \sum_{n=1}^{\infty} \sum_{m=-n}^n \I^n \, \bigg[ - \left( \vphi(\vd) \cdot \vV_n^{-m}(\vd) \right) \vM_n^m (\vx)
      + \left( \vphi(\vd) \cdot \vU_n^{-m}(\vd) \right) \frac{1}{\I k} \, \Dcurl \vM_n^m(\vx) \bigg] \, .
  \end{equation*}
  Substituting $\vphi = \vU_n^m$ or $\vphi = \vV_n^m$ and integrating over $\SS^2$ with respect to $\vd$ lets us conclude
  \begin{equation}
    \label{Ei_with_densities_Unm_Vnm}
      \vE^i[\vU_n^m](\vx) = 4\pi \I^n \, \frac{1}{\I k} \Dcurl\,\vM_n^m(\vx) \, , \qquad
      \vE^i[\vV_n^m](\vx) = -4\pi \I^n \, \vM_n^m(\vx) \, .
  \end{equation}

  For $\vx \in \RR^3 \setminus \{0\}$ we calculate as in the proof of \cite[Thm. 2.43]{KirHet2015}
  \begin{equation*}
    \Dcurl \vM_n^m(\vx) = \frac{ \sqrt{n(n+1)} }{ | \vx | } \, j_n(k |\vx| ) \, \vW_n^m(\widehat{\vx}) + \frac{1}{|\vx|} \, \big( j_n(k | \vx |)
    + k | \vx | \, j_n'(k | \vx | ) \big) \, \vU_n^m(\widehat{\vx}) ) \, .
  \end{equation*}
  Thus, the assertion follows.
\end{proof}

\begin{lemma}
 \label{lemma:Ei_Unm_Vnm_norms_as_integrals}
 The $L^2$-norms of the Herglotz wave functions with vector spherical harmonics as densities are given by
 \begin{align*}
  \| \vE^i[\vU_n^m] \|_{L^2(B_R)}^2 &= 16 \pi^2 \frac{n(n+1)}{k^2} \int_0^R \vert j_{n}(k r) \vert^2 \, \D r + 16 \pi^2 \frac{1}{k^2} \int_{0}^R \big\vert \big( r j_n(kr) \big)' \big\vert^2 \, \D r \, , \\
  \| \vE^i[\vV_n^m] \|_{L^2(B_R)}^2 &=  16\pi^2 \int_0^R \vert r j_n(kr) \vert^2 \, \D r \, .
 \end{align*}
\end{lemma}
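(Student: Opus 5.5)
We compute both norms directly from the explicit representations in Lemma~\ref{lemma:Ei_Unm_Vnm}, working in spherical coordinates $\vx = r\vxhat$ with $\D\vx = r^2 \, \D r \, \D s(\vxhat)$. The key structural fact is that $\vW_n^m$, $\vU_n^m$, $\vV_n^m$ are pointwise mutually orthogonal in $\CC^3$: $\vW_n^m$ is radial, while $\vU_n^m$ and $\vV_n^m = \vxhat \times \vU_n^m$ are tangential and orthogonal to each other. Each also has unit $L^2(\SS^2)$-norm. For $\vW_n^m$ this follows because $|\vW_n^m| = |Y_n^m|$ and the $Y_n^m$ are orthonormal. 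For $\vU_n^m$ we use Green's formula on the sphere, $\int_{\SS^2} |\DGrad Y_n^m|^2 \, \D s = n(n+1)\int_{\SS^2}|Y_n^m|^2 \, \D s = n(n+1)$, which is exactly why the factor $1/\sqrt{n(n+1)}$ appears in \eqref{eq:vec_sph_harmonics}. For $\vV_n^m$, note that $|\vV_n^m| = |\vU_n^m|$ pointwise.

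For $\vE^i[\vV_n^m]$ the computation is immediate. By Lemma~\ref{lemma:Ei_Unm_Vnm}, $|\vE^i[\vV_n^m](r\vxhat)|^2 = 16\pi^2 |j_n(kr)|^2 |\vV_n^m(\vxhat)|^2$ (note $|\I^n| = 1$). Integrating over $\SS^2$ gives the factor $1$, and we are left with $16\pi^2\int_0^R |j_n(kr)|^2 r^2 \, \D r = 16\pi^2 \int_0^R |r j_n(kr)|^2 \, \D r$, which is the second claimed identity.

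For $\vE^i[\vU_n^m]$ we expand the squared modulus. Pointwise orthogonality kills the cross terms, so
\[
|\vE^i[\vU_n^m](r\vxhat)|^2 = \frac{16\pi^2}{k^2 r^2}\Big( n(n+1)|j_n(kr)|^2 |\vW_n^m(\vxhat)|^2 + \big|j_n(kr)+kr\, j_n'(kr)\big|^2 |\vU_n^m(\vxhat)|^2\Big).
\]
Integrating over $\SS^2$ replaces both angular factors by $1$. The factor $r^2$ from the volume element then cancels the $1/r^2$. Finally, we identify $j_n(kr) + kr\, j_n'(kr) = \frac{\D}{\D r}\big(r j_n(kr)\big)$ by the product rule, which yields the first identity.

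The main point requiring care is the orthonormality of the three families in $L^2_t(\SS^2)$, specifically the normalization of $\vU_n^m$ via the surface Laplace–Beltrami eigenvalue relation $\Delta_{\SS^2} Y_n^m = -n(n+1)Y_n^m$. This is standard (cf.\ \cite[Lmm.~9.15]{Monk2003}, \cite{ColKre2019}). The remaining steps are routine, and no convergence issues arise because $j_n$ is smooth on $[0,R]$. In particular, the apparent singularity $1/r$ at the origin in Lemma~\ref{lemma:Ei_Unm_Vnm} disappears after multiplication by $r^2$.
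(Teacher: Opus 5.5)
Your proof is correct and is essentially the paper's argument: insert the representations from Lemma~\ref{lemma:Ei_Unm_Vnm} into the $L^2(B_R)$-integral in spherical coordinates and use the orthonormality of $\vU_n^m$, $\vV_n^m$, $\vW_n^m$ on $\SS^2$, which the paper simply cites from Kirsch--Hettlich. One harmless slip: for $m \neq 0$, $Y_n^m$ is complex, and then $\vU_n^m$ and $\vV_n^m$ are only $L^2(\SS^2)$-orthogonal, not pointwise Hermitian-orthogonal; this does not matter, because your computation only uses the pointwise orthogonality of the radial $\vW_n^m$ and the tangential $\vU_n^m$, which does hold.
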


\begin{proof}
  The assertion follows from Lemma \ref{lemma:Ei_Unm_Vnm} and the fact that the functions $\vU_n^m$, $\vV_n^m$ and $\vW_n^m$ form an orthonormal system in $L^2(\SS^2)$ (see \cite[Thm. 5.36]{KirHet2015}).
\end{proof}

\begin{lemma}
 \label{lemma:bessel_series}
 For any $\xi \in \NN$, the series
 \[
  \left( \sum_{n=0}^{\infty} n^{\xi} \, | j_{n}(z) |^2 \right) \qquad \text{and} \qquad \left( \sum_{n=0}^{\infty}  n^{\xi} \, | j_{n}'(z) |^2\right)
 \]
 converge uniformly with respect to $z$ on any compact subset of $\CC$.
\end{lemma}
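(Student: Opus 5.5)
The plan is to bound the terms of both series by a summable majorant that does not depend on $z$, and then conclude with the Weierstrass M-test. We fix a compact set $K \subseteq \CC$ and choose $\varrho > 0$ with $|z| \le \varrho$ for all $z \in K$. The key tool is the power series of the spherical Bessel functions,
\[
  j_n(z) = \sum_{l=0}^{\infty} \frac{(-1)^l \, z^{n+2l}}{2^l \, l! \, 1\cdot 3 \cdots (2n+2l+1)} \, ,
\]
which holds for all $z \in \CC$ (see, e.g., \cite[Equ. (2.34)]{ColKre2019}).

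\textbf{Step 1: bound on $j_n$.} Since $1\cdot 3\cdots(2n+2l+1) \ge 1 \cdot 3 \cdots (2n+1)\,(2n+3)^l \ge 1\cdot 3\cdots(2n+1)$, we estimate termwise:
\[
  |j_n(z)| \le \frac{\varrho^n}{1\cdot 3\cdots(2n+1)} \sum_{l=0}^\infty \frac{\varrho^{2l}}{2^l \, l!}
  = \frac{\varrho^n \, \E^{\varrho^2/2}}{1\cdot 3\cdots(2n+1)} \, , \qquad |z| \le \varrho \, .
\]
The product in the denominator grows at least like $n!$, since $1\cdot3\cdots(2n+1) \ge n!$. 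Hence $|j_n(z)| \le C_\varrho \, \varrho^n / n!$ uniformly on $K$.

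\textbf{Step 2: bound on $j_n'$.} We differentiate the power series termwise. This is legitimate because the series is entire, and it only introduces factors $(n+2l)$. Using $(n+2l)/|z| \cdot |z|^{n+2l} \le (n+2l)\,\varrho^{n+2l-1}$ and $n + 2l \le (n+1)(2l+1)$, we obtain
\[
  |j_n'(z)| \le \frac{(n+1)\,\varrho^{n-1}}{1\cdot3\cdots(2n+1)} \sum_{l} \frac{(2l+1)\,\varrho^{2l}}{2^l\, l!} \le C_\varrho' \, \frac{(n+1)\varrho^{n-1}}{n!} \, .
\]
For $n = 0$ there is no $\varrho^{-1}$ singularity, since $j_0'$ has a series starting at $z^1$; this case can be treated separately or simply absorbed into a constant. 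Alternatively, we could use the recursion $j_n' = j_{n-1} - \frac{n+1}{z} j_n$, but the $1/z$ factor makes the direct series argument cleaner.

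\textbf{Step 3: M-test.} Steps 1 and 2 give
\[
  n^\xi |j_n(z)|^2 \le C \, \frac{n^\xi \varrho^{2n}}{(n!)^2} \qquad \text{and} \qquad n^\xi |j_n'(z)|^2 \le C \, \frac{n^\xi (n+1)^2 \varrho^{2n-2}}{(n!)^2} \, .
\]
Both right-hand sides are summable, for instance by the ratio test, since the ratio of consecutive terms tends to $0$. The Weierstrass M-test then yields uniform convergence on $K$.

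The only mildly delicate point is Step 2. There we must make sure that differentiating does not destroy the factorial decay, and that the small-$n$ or $z$-near-$0$ behaviour is harmless. Both issues are handled by working directly with the entire power series rather than with recursion formulas.
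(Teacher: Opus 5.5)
Your proof is correct, but it takes a different route from the paper.

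\textbf{The paper's route.} It quotes the uniform asymptotics $j_n(z) = \frac{z^n}{(2n+1)!!}\bigl(1+O(1/n)\bigr)$ of Kirsch and Hettlich and applies the ratio test to the resulting leading terms. It then handles $j_n'$ in one line with the three-term recurrence $j_n' = \frac{n}{2n+1}\,j_{n-1} - \frac{n+1}{2n+1}\,j_{n+1}$. Its coefficients are bounded by $1$, so the derivative series is dominated by index-shifted copies of the first series.

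\textbf{Your route.} You work directly with the entire power series of $j_n$. On $|z|\le\varrho$ this gives the explicit, $z$-independent majorants $C\varrho^{n}/n!$ for $|j_n|$ and $C(n+1)\varrho^{n-1}/n!$ for $|j_n'|$. You finish with the Weierstrass M-test, and termwise differentiation replaces the recurrence.

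\textbf{Comparison.} Your version is longer but fully self-contained, and it makes the uniformity explicit. The paper's appeal to the ratio test tacitly relies on the uniform $O(1/n)$ remainder to produce exactly such a summable majorant.

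Your concern about the $1/z$ factor in $j_n' = j_{n-1} - \frac{n+1}{z}\,j_n$ goes away if you use the recurrence the paper uses. That would reduce your Step 2 to a corollary of Step 1. The $n=0$ case is harmless in any event, since that term carries the factor $0^\xi = 0$.
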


\begin{proof}
  By \cite[Thm. 2.31]{KirHet2015}, we have
  \[
     n^\xi \, | j_n(z) |^2 = \frac{n^\xi}{(2n + 1)!!} \, \frac{| z |^{2n}}{(2n + 1)!!} \left( 1 + \mathrm{O}\left( \frac{1}{n} \right) \right), \qquad n \to \infty \, ,
  \]
  uniformly on compact subsets of $\CC$, where $(2n+1)!! = 1 \cdot 3 \cdots (2n+1)$ for $n \in \NN$. Thus, by the ratio test, the first series converges uniformly on compact subsets of $\CC$.

 The assertion for the second series follows from that of the first also using the recurrence relation \cite[Equ. 10.51.1]{NIST}
 \[
   j_n'(z) = \frac{n}{2n+1} \, j_{n-1}(z) - \frac{n+1}{2n+1} \, j_{n+1}(z) \, .
 \]
\end{proof}

\begin{proof}[Proof of Theorem~\ref{thm:ffop_frechet}]
  We have established that
  \begin{align*}
    \sum_{n=1}^{\infty}\sum_{m=-n}^{n} \Vert \vE^i[\vU_n^m] \Vert_{L^2(B_R)}^2
    & = \frac{16 \pi^2}{k^2} \int_0^R \left[ \sum_{n=1}^\infty (2n + 1) \left( n(n+1) \, | j_n(kr) |^2 + \left| \left( r j_n(kr) \right)' \right|^2 \right) \right] \D r \, , \\
    \sum_{n=1}^{\infty}\sum_{m=-n}^{n} \Vert \vE^i[\vV_n^m] \Vert_{L^2(B_R)}^2
    & = 16 \pi^2 \int_0^R \left[ \sum_{n=1}^\infty (2n + 1) \left| r j_n(kr) \right|^2 \right] \D r \, ,
  \end{align*}
  provided the series in the integrals converge uniformly for $r \in [0, R]$. Once we have shown that this is indeed the case, the assertion of the theorem follows from \red{Lemma} \ref{lemma:HS_norm_estimate}.

  Elementary estimates yield
  \begin{gather*}
  \begin{aligned}
   (2n + 1) \, n(n+1) \, | j_n(kr) |^2 &\leq 6 n^3 \, | j_n(kr) |^2 \, , \\[1ex]
   (2n + 1) \left| \left( r j_n(kr) \right)' \right|^2
   &= (2n + 1) \left| j_n(kr) + k r \, j_n'(kr) \right|^2
   \leq 6n \, | j_n(kr) |^2 + 6n k^2 R^2 \, | j_n'(kr) |^2 \, , \\[1ex]
   (2n + 1) \left| r j_n(kr) \right|^2 &\leq 3n R^2 \, | j_n(kr) |^2 \, .
   \end{aligned}
  \end{gather*}
  The uniform convergence on $[0, R]$ of the series over the right hand sides in all three estimates has been established in Lemma \ref{lemma:bessel_series}. This completes the proof of the theorem.
\end{proof}

\section{The Optimal Design Algorithm}
\label{sec:OptDesignAlgorithm}

In this section, we explain our approach to optimize the shape of obstacles from the class of tubular domains, which is described in detail in \cite{AreKnoSch2026}. We will give specifics below, but for the moment it suffices to say that for some open subset $\mathcal{M}$ of a normed space $\mathcal{Y}$, every element of $\vX \in \mathcal{M}$ corresponds to one admissible domain $D[\vX]$. Our algorithm aims to maximize the normalized chirality measure $\mathcal{J}_{\hs} : \mathcal{M} \to [0, 1]$ defined by
\begin{equation}
  \label{eq:J_hs}
  \mathcal{J}_{\hs}(\vX) = \frac{\chi_{\hs}(\calF_{D[\vX]})}{\| \calF_{D[\vX]} \|_{\hs}} \, , \quad \vX \in \mathcal{M} \, .
\end{equation}
Due to the ill-posed nature of the shape optimization problem, we have to add four regularization terms~$\Psi_1, \ldots, \Psi_4$, yielding the optimization problem
\begin{equation}
 \label{eq:reg_optim_prob}
 \text{find } \quad \underset{\vX \in \mathcal{M}}{\operatorname{argmin}} \, \big[ - \Upsilon(\vX) \big]
 \qquad \text{with} \quad
 \Upsilon(\vX) = \mathcal{J}_{\hs}(\vX) + \sum_{j=1}^4 \alpha_j \, \Psi_j(\vX) \, , \quad \vX \in \mathcal{M} \, .
\end{equation}

Below, we establish that all terms in the objective functional $\Upsilon$ are Fr\'echet differentiable and derive explicit formulas for evaluating these derivatives. Hence, we can apply a BFGS scheme with a cautious update rule and an inexact Armijo-type line search as suggested in \cite{LiFuk2001} and previously used in \cite{AreGri2021, FerGri2023, Knoel2023}. Fix a subspace $\mathcal{Y}_m \subseteq \mathcal{Y}$ of dimension $m$ and let $\mathcal{M}_m = \mathcal{M} \cap \mathcal{Y}_m$. In the BFGS scheme we construct a sequence $(\vXj)$ from $\mathcal{M}_m$. In each iteration step, we need to compute the gradient $\nabla \Upsilon(\vXj)$ in the topology of $\mathcal{Y}_m$ and to approximate the Hessian of $\Upsilon$. Denote by $\vHj$ the $j$th iterate of some symmetric, positive definite matrix approximating the true Hessian. Then, we consider the convex quadratic approximation of $\Upsilon$ at the current iterate $\vXj$
\begin{equation*}
	q_j(\vd_{\text{BFGS}}) = \Upsilon(\vXj) + \nabla \Upsilon(\vXj) \cdot \vd_{\text{BFGS}} + \frac{1}{2} \, \vd_{\text{BFGS}} \cdot \vHj \vd_{\text{BFGS}}
\end{equation*}
which is minimized by
\begin{equation*}
	\vd_{\text{BFGS}, j} = - \vHj^{-1} \, \nabla \Upsilon(\vXj) \, .
\end{equation*}
We use this minimizer as the search direction, getting the $(j+1)$-th iterate
\begin{equation*}
	\vXjj = \vXj + \lambda_j \vd_{\text{BFGS},j} \, .
\end{equation*}
For the cautious update rule of $\vHj$ we define
\begin{equation*}
	\vSj = \vXjj - \vXj = \lambda_j \vd_{\text{BFGS}, j} \, , \quad \vYj = \nabla \Upsilon(\vXjj) - \nabla \Upsilon(\vXj)
\end{equation*}
and update $\vHj$ through
\begin{equation*}
	\vHjj = \vHj - \frac{\vHj \vYj \vYj^{\top} \vHj}{\vYj^{\top} \vHj \vYj} + \frac{\vSj \vSj^{\top}}{\vYj^{\top} \vSj} \qquad \text{if } \frac{\vYj^{\top} \vSj}{|\vSj|^2} > \varepsilon_{\text{BFGS}} \, |\nabla \Upsilon(\vXj)|
\end{equation*}
for some $\varepsilon_{\text{BFGS}}$. Hence, we perform the standard BFGS update, if a stronger curvature condition than $\vYj^\top \vSj > 0$ is satisfied. We set $\vHjj = \vHj$ otherwise. This ensures positive definiteness of $\vHj$ for any $j \in \NN_0$.

In the inexact Armijo-type line search for determining the step size $\lambda_j$ as proposed in \cite{LiFuk2001}, for given $\sigma \in (0, 1)$, $\delta \in (0,1)$, we want to find the smallest $m \in \NN_0$ such that $\delta^m$ satisfies
\begin{align*}
	\Upsilon(\vXj + \delta^m \vd_{\text{BFGS}, j}) \leq \Upsilon(\vXj) + \sigma \delta^m \, \nabla \Upsilon(\vXj) \cdot \vd_{\text{BFGS}, j} \, .
\end{align*}
We then set $\lambda_j = \delta^m$.

The main numerical effort in this scheme is spent on the computation of the gradient of $\Upsilon$. We have to establish differentiability and state computable formulae for the derivatives. We first focus on $\mathcal{J}_{\hs}$ and address the regularization terms later. The em-chirality measure $\chi_\hs$ was defined in Definition \ref{defi:chi_hs}. A formula for its derivative has been given in \cite{AreGri2021, FerGri2023} and can easily be derived from \cite[Lmm. 5.15]{Hagem2019}. We here state it in the following lemma:

\begin{lemma}
  \label{lemma:deriv_chi}
   Let
	\begin{equation*}
		\mathcal{D} = \Big\{ \mathcal{G} \in \hs(L_t^2(\SS^2)) \;\colon \chi_{\hs}(\mathcal{G}) \neq 0,~ \Vert \mathcal{G}^{pq} \Vert_{\hs} \neq 0, ~p,q \in \{ +, - \} \Big\}
	\end{equation*}
  and assume $\calF \in \mathcal{D}$. Then $\chi_{\hs}$ is Fr\'echet differentiable at $\calF$ with
 \begin{equation}
   \label{eq:deriv_chi}
    \chi_{\hs}'[\calF]\mathcal{H} = \frac{1}{\chi_{\hs}(\calF)} \left( \real\left( \calF, \mathcal{H} \right)_{\hs} - \sum_{p,q\in\{+,-\}} \real\left( \calF^{pq}, \mathcal{H}^{pq} \right)_{\hs} \frac{\Vert \calF^{(-p)(-q)} \Vert_{\hs}}{\Vert \calF^{pq} \Vert_{\hs}} \right)
  \end{equation}
  for all $\mathcal{H} \in \hs(L_t^2(\SS^2))$.
\end{lemma}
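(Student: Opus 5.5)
We prove Lemma~\ref{lemma:deriv_chi} by writing $\chi_{\hs}(\calF) = \sqrt{\Phi(\calF)}$ with
\begin{equation*}
  \Phi(\calF) = \Vert \calF \Vert_{\hs}^2 - 2 \big( \Vert \calF^{++} \Vert_{\hs} \Vert \calF^{--} \Vert_{\hs} + \Vert \calF^{+-} \Vert_{\hs} \Vert \calF^{-+} \Vert_{\hs} \big) ,
\end{equation*}
and differentiating each building block separately before applying the chain rule. First, we note that $\hs(L_t^2(\SS^2))$ is a real Hilbert space when equipped with the inner product $\real(\cdot,\cdot)_{\hs}$. The map $\calF \mapsto \Vert \calF \Vert_{\hs}^2$ is then a continuous quadratic form, hence Fr\'echet differentiable everywhere, with derivative $\mathcal{H} \mapsto 2\real(\calF,\mathcal{H})_{\hs}$. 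This follows directly from the expansion $\Vert \calF+\mathcal{H}\Vert_{\hs}^2 = \Vert \calF\Vert_{\hs}^2 + 2\real(\calF,\mathcal{H})_{\hs} + \Vert\mathcal{H}\Vert_{\hs}^2$.

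Second, we treat the component maps $\calF \mapsto \calF^{pq} = \mathcal{P}^p \calF \mathcal{P}^q$. These are bounded linear maps on $\hs$, since $\Vert \mathcal{P}^p \calF \mathcal{P}^q\Vert_{\hs} \le \Vert \calF\Vert_{\hs}$ because the orthogonal projections have norm at most one. Away from the origin, the norm $\mathcal{G}\mapsto \Vert\mathcal{G}\Vert_{\hs} = (\Vert\mathcal{G}\Vert_{\hs}^2)^{1/2}$ is differentiable with derivative $\real(\mathcal{G},\cdot)_{\hs}/\Vert\mathcal{G}\Vert_{\hs}$. Composing with the linear map and using $(\calF^{pq},\mathcal{H}^{pq})_{\hs}$ shows that $\calF\mapsto \Vert\calF^{pq}\Vert_{\hs}$ is Fr\'echet differentiable at every $\calF$ with $\calF^{pq}\neq 0$. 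This is exactly where the assumption $\Vert \calF^{pq}\Vert_{\hs}\neq 0$ in the definition of $\mathcal{D}$ enters.

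Third, the product rule gives the derivative of each product $\Vert\calF^{pq}\Vert_{\hs}\Vert\calF^{(-p)(-q)}\Vert_{\hs}$. Summing the contributions, each pair $(p,q)$ appears exactly once as the differentiated factor across the two products $++/--$ and $+-/-+$. We therefore obtain
\begin{equation*}
  \Phi'[\calF]\mathcal{H} = 2\real(\calF,\mathcal{H})_{\hs} - 2\sum_{p,q\in\{+,-\}} \real(\calF^{pq},\mathcal{H}^{pq})_{\hs}\frac{\Vert\calF^{(-p)(-q)}\Vert_{\hs}}{\Vert\calF^{pq}\Vert_{\hs}} .
\end{equation*}

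Finally, $\chi_{\hs}(\calF)\neq 0$ means $\Phi(\calF)>0$, and the square root is smooth on $(0,\infty)$. The chain rule then yields $\chi_{\hs}'[\calF]\mathcal{H} = \Phi'[\calF]\mathcal{H}/(2\sqrt{\Phi(\calF)})$, which is \eqref{eq:deriv_chi}. The only delicate point is working consistently over $\RR$: $\chi_{\hs}$ is not complex differentiable, so derivatives are real-linear in $\mathcal{H}$, which is why the real parts appear. Beyond this, the argument is a routine chain-rule computation and presents no genuine obstacle.
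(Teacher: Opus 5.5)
Your proof is correct. The paper does not prove this lemma itself: it quotes the formula from the literature and notes that it follows easily from \cite[Lmm.~5.15]{Hagem2019}. Your chain-rule argument is exactly that routine derivation: you write $\chi_{\hs}=\sqrt{\Phi}$, differentiate $\Vert\calF\Vert_{\hs}^2$, differentiate $\Vert\calF^{pq}\Vert_{\hs}$ away from zero, apply the product rule, and finally take the square root at $\Phi(\calF)>0$. The one step you leave implicit, $\Phi\ge 0$, follows from $\Vert\calF\Vert_{\hs}^2=\sum_{p,q}\Vert\calF^{pq}\Vert_{\hs}^2$, which gives $\Phi(\calF)=(\Vert\calF^{++}\Vert_{\hs}-\Vert\calF^{--}\Vert_{\hs})^2+(\Vert\calF^{+-}\Vert_{\hs}-\Vert\calF^{-+}\Vert_{\hs})^2$.
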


With this lemma and our results on Fr\'echet differentiability of the far field operator in Theorem \ref{thm:ffop_frechet}, it is now straightforward to establish differentiability of $\mathcal{J}_\hs$ from \eqref{eq:J_hs} in any $\vX \in \mathcal{M}$. For the statement of the theorem, recall the definition of the set $\mathcal{P}$ in \eqref{eq:def_P} and of the perturbed domain $D_\vh$ for $\vh \in \mathcal{P}$.

\begin{theorem}
 \label{theo:Jhs_frechet_deriv}
 Let $\vX \in \mathcal{M}$ and $K_\rho \subseteq \mathcal{M}$ the open ball of radius $\rho$ centered at the origin such that $\vX + K_\rho \subseteq \mathcal{M}$. Let $\mathcal{H} \colon K_\rho(\vX) \to \mathcal{P}$ such that $D[\vX + \delta\vX] = D_{\mathcal{H}(\delta\vX)}$, $\delta \vX \in K_\rho$. Further assume that~$\mathcal{H}$ is Fr\'echet differentiable at $0$. Then the normalized chirality measure $\mathcal{J}_{\hs}$ is Fr\'echet differentiable at $\vX$. With
 \[
   \vh = \D \mathcal{H}[0] \, \delta\vX \, , \qquad \delta\vX \in \mathcal{Y} \, ,
 \]
 its Fr\'echet derivative is given by
  \[
   \D \mathcal{J}_{\hs}[\vX] \, \delta \vX
   = \frac{ \chi_{\hs}'[\mathcal{F}_{D[\vX]}] \, \mathcal{F}_{D[\vX]}' \vh }{ \| \mathcal{F}_{D[\vX]} \|_{\hs} } - \frac{ \chi_{\hs} (\mathcal{F}_{D[\vX]}) \, \real \langle \mathcal{F}_{D[\vX]}, \mathcal{F}_{D[\vX]}' \vh \rangle_{\hs} }{ \| \mathcal{F}_{D[\vx]} \|_{\hs}^3 } \, , \qquad \delta\vX \in \mathcal{Y} \, ,
  \]
  with $\chi'_{\hs}$ given by \eqref{eq:deriv_chi} and $\mathcal{F}_{D[\vX]}' \vh$ defined by \eqref{eq:def_ffop_frechet}.
\end{theorem}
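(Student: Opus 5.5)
The derivative of $\mathcal{J}_\hs$ is obtained by writing $\mathcal{J}_\hs$ as a composition of three maps and applying the chain rule:
\[
\delta\vX \;\mapsto\; \mathcal{H}(\delta\vX) \;\mapsto\; \calF_{D_{\mathcal{H}(\delta\vX)}} \;\mapsto\; \frac{\chi_\hs(\calF)}{\|\calF\|_\hs}.
\]

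\textbf{Step 1.} Define $\Phi\colon \mathcal{P}\to\hs(L^2_t(\SS^2))$ by $\Phi(\vh)=\calF_{D_\vh}$. Then $\calF_{D[\vX+\delta\vX]}=\Phi(\mathcal{H}(\delta\vX))$. Theorem~\ref{thm:ffop_frechet} gives the differentiability of $\Phi$ at the unperturbed configuration $\vh=0$, with derivative $\calF'_D$.

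The map $\calF'_D$ in \eqref{eq:def_ffop_frechet} is linear in $\vh$. Indeed, the material derivative $\vW$ depends linearly on $\vh$, and $\vE'$ depends linearly on $\vW$ and $\vh$. It is also bounded, by $\|\calF'_D\vh\|_\hs\le C\|\vh\|_{1,\infty}$. This bound follows by the same argument as Lemma~\ref{lemma:HS_norm_estimate}: the estimate $\|\vW\|_{H(\Dcurl,\Omega)}\le C\|\vh\|_{1,\infty}\|\vE^i\|$ follows from the definition of $\vW$ together with the summability of the Herglotz norms.

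\textbf{Step 2.} Since $\mathcal{H}$ is Fr\'echet differentiable at $0$, we have
\[
\mathcal{H}(\delta\vX)=\mathcal{H}(0)+\D\mathcal{H}[0]\delta\vX+o(\|\delta\vX\|).
\]
There is a bookkeeping subtlety here. The estimate \eqref{eq:frechet_der_ffop} is formulated around the reference domain $D$ with perturbation $\vh$, so I take $D=D[\vX]$ and interpret $\mathcal{H}(0)=0$. Then $\|\mathcal{H}(\delta\vX)\|_{1,\infty}=O(\|\delta\vX\|)$. Combining \eqref{eq:frechet_der_ffop} with the linearity and boundedness of $\calF'_D$ from Step 1 gives
\[
\calF_{D[\vX+\delta\vX]}-\calF_D-\calF'_D\,\D\mathcal{H}[0]\delta\vX = o(\|\delta\vX\|)
\]
in the HS norm. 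This is the chain rule, and it is the main technical step: one must check that $\calF'_D$ is indeed a bounded linear map on the span of $\mathcal{P}$. That in turn requires that the $O(\|\vh\|^2)$ constants in Lemmas~\ref{lemma:estimates_dom_der_Hcurl}--\ref{lemma:E_prime} are uniform, which they are.

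\textbf{Step 3.} The outer map is $\calF\mapsto\chi_\hs(\calF)/\|\calF\|_\hs$.
\begin{itemize}
\item The map $\chi_\hs$ is differentiable at $\calF_{D[\vX]}$ by Lemma~\ref{lemma:deriv_chi}. This is where the implicit assumption $\calF_{D[\vX]}\in\mathcal{D}$ enters.
\item The HS norm is a Hilbert space norm, so it is differentiable away from $0$ with derivative $\mathcal{G}\mapsto\real(\calF,\mathcal{G})_\hs/\|\calF\|_\hs$. Moreover $\calF_D\neq0$, since $\calF_D^{pq}\neq0$ on $\mathcal{D}$.
\end{itemize}
The quotient rule then gives
\[
\frac{\chi'_\hs[\calF]\mathcal{G}}{\|\calF\|_\hs}-\frac{\chi_\hs(\calF)\,\real(\calF,\mathcal{G})_\hs}{\|\calF\|_\hs^3}.
\]

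\textbf{Step 4.} Composing Step 3 with Step 2 and inserting $\mathcal{G}=\calF'_{D[\vX]}\vh$, where $\vh=\D\mathcal{H}[0]\delta\vX$, yields the stated formula. Its linearity and boundedness in $\delta\vX$ are inherited from the three factors.
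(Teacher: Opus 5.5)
Your proposal is correct and follows the paper's proof. The paper likewise applies the chain rule to $\delta\vX \mapsto \mathcal{H}(\delta\vX) \mapsto \mathcal{F}_{D_{\mathcal{H}(\delta\vX)}}$ via Theorem~\ref{thm:ffop_frechet}, composes with $\chi_{\hs}$ via Lemma~\ref{lemma:deriv_chi}, and gets the formula from the quotient rule; your extra bookkeeping (normalizing $\mathcal{H}(0)=0$, checking that $\mathcal{F}'_D$ is bounded and linear, and stating the implicit hypothesis $\mathcal{F}_{D[\vX]}\in\mathcal{D}$) fills in details the paper leaves unstated.
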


\begin{proof}
  By Theorem \ref{thm:ffop_frechet}, $\mathcal{F}_D$ is Fr\'echet differentiable at $D[\vX]$ and by the chain rule we have
  \[
   \D \left( \vX \mapsto \mathcal{F}_{D[\vX]} \right) [\vX] = \mathcal{F}_{D[\vX]}' \D \mathcal{H}[0] \, .
  \]
  Recall that $\chi_{\hs}$ is Fr\'echet differentiable by Lemma \ref{lemma:deriv_chi}. Hence, a further application of the chain rule gives
  \[
   \D \left( \vX \mapsto \chi_{\hs} (\mathcal{F}_{D[\vX]}) \right) [\vX] = \chi_{\hs}'[\mathcal{F}_{D[\vX]}] \, \mathcal{F}_{D[\vX]}' \D \mathcal{H}[0] \, .
  \]
  This shows that $\mathcal{J}_{\hs}$ is also Fr\'echet differentiable at $\vX$ and the final formula follows from standard differentiation rules.
\end{proof}

The domains used in our shape optimization scheme have tubular shapes and are represented by a regular twice continuously differentiable spine curve $\vz \colon [0,1] \to \RR^3$ with a continuous accompanying orthogonal frame $(\vt, \vn, \vb)$ and by a continuously differentiable radius function $r \colon [0,1] \to \RR_{> 0}$. The body of the tube is given by the parametrization
\begin{equation}
 \label{eq:tubelike_param}
  \begin{aligned}
    \vx(\tau, \varphi) & = \vz(\tau) + r(\tau) \, \vzeta(\tau, \varphi) \, , \\
    \text{where} & \ \vzeta(\tau, \varphi) = \cos ( \varphi ) \, \vn(\tau) + \sin( \varphi ) \, \vb(\tau) \, ,
  \end{aligned}  \qquad
  \tau \in (0,1) \, , \quad \varphi \in (-\pi, \pi] \, .
\end{equation}
Caps are added at both ends of the tube, so that an overall $C^1$ regular domain is obtained. Thus, the normed space $\mathcal{Y}$ is given by $\mathcal{Y} = C^2([0,1]) \times C^1([0, 1])$ and the set $\mathcal{M}$ is obtained by restricting to those functions for which well-defined tubular domains are obtained from \eqref{eq:tubelike_param}. The precise construction is carried out in~\cite{AreKnoSch2026}. For this class of domains, the proof of Theorem 3.5 in \cite{AreKnoSch2026} moreover contains the construction of the map $\mathcal{H}$ in Theorem \ref{theo:Jhs_frechet_deriv} and explicit computable expressions for $\vh$ are also given.

To obtain the finite dimensional subspace $\mathcal{Y}_m$, we restrict $\vz$ to the space of clamped B-spline curves of polynomial degree $3$ with $n+1$ control points $\vc^{(0)}, \ldots, \vc^{(n)} \in \RR^3$. We select $m = n - 1$ distinct knots distributed uniformly in $[0,1]$, so that the overall regularity of $\vz$ is $C^2$. For $r$, we restrict to the space of natural cubic splines with the knots of the B-splines as interpolation points. The accompanying orthogonal frame is obtained by constructing a rotation minimizing frame with the algorithm provided in \cite{WanJue2008}.

The computation of $\nabla \Upsilon(\vX_j)$ for some iterate $\vX_j \in \mathcal{M}_j$ requires the solution of an exterior Maxwell problem \eqref{eq:maxwell}--\eqref{eq:SMRC} to compute the total fields for the domain represented by $\vX_j$ as well as one such problem with the boundary condition \eqref{eq:domain_derivative} with a corresponding $\vh$ for each element of a basis of $\mathcal{Y}_m$.

Numerically, we solve the problems through a boundary integral formulation using the electric field integral equation (EFIE).
In what follows, the superscripts ``$+$'' and ``$-$'' for $\gamma_t$ indicate traces onto the boundary from out- and inside, respectively.
We define the \textit{electric Maxwell single layer potential} $\boldsymbol{\Psi}_{\mathrm{SL}} \colon H^{-1/2}(\DDiv, \partial D) \rightarrow H_{\text{loc}}(\Dcurl^2, \RR^3\setminus\overline{D})$ by
\begin{align*}
	\boldsymbol{\Psi}_{\mathrm{SL}}\vphi(\vx) = \I k \int_{\partial D} \vphi(\vy) {\Phi}(\vx, \vy) \, \D s(\vy) - \frac{1}{\I k} ~ \nabla \int_{\partial D} \DDiv\vphi(\vy) \, {\Phi}(\vx, \vy) \, \D s(\vy) \, , \qquad \vx \notin \partial D \, ,
\end{align*}
where, again, $\Phi$ denotes the fundamental solution to the Helmholtz equation. We then use the ansatz
\begin{align}\label{eq:ansatz efie}
	\vE^s = - \boldsymbol{\Psi}_{\mathrm{SL}} \vlambda \quad \text{or} \quad \vE' = - \boldsymbol{\Psi}_{\mathrm{SL}} \vlambda \qquad \textnormal{in } \RR^3\setminus\overline{D} \, ,
\end{align}
with the unknown $\vlambda \in H^{-1/2}(\DDiv, \partial D)$.
Continuity of $\boldsymbol{\Psi}_{\mathrm{SL}}$ 
has been shown in \cite[Sec. 3]{BufHip2001}.
Furthermore, we define the \textit{electric Maxwell single layer operator} $\boldsymbol{S} \colon H^{-\frac12}(\DDiv, \partial D) \rightarrow H^{-1/2}(\DDiv, \partial D)$ by
\begin{align*}
	\boldsymbol{S} = \frac12 \left( \gamma_t^+ \boldsymbol{\Psi}_{\mathrm{SL}} + \gamma_t^- \boldsymbol{\Psi}_{\mathrm{SL}} \right) .
\end{align*}
Continuity of $\boldsymbol{S}$ is shown in \cite[Cor. 3.4]{BufHip2004}.
Applying $\gamma_t$ to \eqref{eq:ansatz efie}, exploiting the perfect conductor boundary condition \eqref{eq:bc_perf_cond} and using the jump relations for the single layer potential \cite[Sec. 3]{BufHip2001}, we arrive at the EFIE
\begin{align}
	\label{eq:efie}
	\boldsymbol{S} \vlambda =
	\begin{cases}
		\gamma_t^+ \vE^i \, , & \quad \text{for the scattering problem} \, , \\
		-\I k \vh_{\vnu} \, \gamma_T \vH + \DGrad( \vh_{\vnu} \vE_{\vnu} ) \times \vnu \, , & \quad \text{for the domain derivative} \, .
	\end{cases}
\end{align}
By \cite[Thm. 10]{BufHip2003}, we know that \eqref{eq:efie} admits a unique solution $\vlambda \in H^{-1/2}(\DDiv, \partial D)$ if $k^2$ is not an interior eigenvalue of the $\Dcurl\Dcurl$-operator in $D$ for the homogeneous perfect conductor boundary condition.
The corresponding far field pattern is then obtained by
\begin{align*}
	\vE^{\infty}(\vxhat) = - \boldsymbol{\Psi}_{\mathrm{SL}}^{\infty} \vlambda (\vxhat) \, , \qquad \vxhat \in \SS^2 \, ,
\end{align*}
where for $\boldsymbol{\Psi}_{\mathrm{SL}}^{\infty}$ we just need to replace the fundamental solution with its far field pattern in the definition of $\boldsymbol{\Psi}_{\mathrm{SL}}$.

It remains to discuss the four regularization terms $\Psi_j$, $j \in \{1, 2, 3, 4\}$, present in \eqref{eq:reg_optim_prob}. The first regularization term
\begin{align*}
	\Psi_1 (\vz) = \int_0^1 \kappa^2(\tau) |\vz'(\tau)| \; \D \tau \, , \qquad \text{with curvature } \kappa(\tau) = \frac{|\vz'(\tau) \times \vz''(\tau)|}{|\vz'(\tau)|^3} \, ,
\end{align*}
regularizes the curvature of the spine curve the tube is based on. The second regularization term
\begin{align*}
	\Psi_2 (\vP) = \sum_{j = 0}^{n-1} \left( \left( \frac{1}{n-1} \sum_{i = 0}^{n - 1} \left| \vx^{(i+1)} - \vx^{(i)} \right| \right) - \left| \vx^{(j+1)} - \vx^{(j)} \right| \right)^2  \, ,
\end{align*}
where $\vP$ is a collection of $n+1$ control points defining the clamped B-spline that parameterizes $\vz$, promotes a uniform distribution of these points.
Note, that this term differs from the second regularization term in \cite{AreKnoSch2026}.
The third term
\begin{align*}
	\Psi_3 (r) = \int_0^1 | r'(\tau) |^2 \; \D \tau
\end{align*}
penalizes large radius variations.
Finally, we add a regularization term, which penalizes too small or negative radii.
To this end, we specify a minimal radius value $r_{\min}$, below which the radius function
should not fall, and some positive value $l < r_{\min}$.
Then, we define the function
\begin{align*}
	f_{r_{\min}}^l(r(t)) =
	\begin{cases}
		\dfrac{1}{2 \pi l} \, , & \quad r(t) \in (-\infty, r_{\min} - l] \, , \\[2.5ex]
		\dfrac{1}{2 \pi l} \, \exp\left({-\dfrac{(r(t) + l - r_{\min})^2}{2 \, l^2}}\right) , & \quad  r(t) \in (r_{\min} - l, \infty) \, ,
	\end{cases}
\end{align*}
and with that the fourth term
\begin{align*}
	\Psi_4 (r) = \int_0^1 f_{r_{\min}}^l(r(\tau)) \; \D \tau \, .
\end{align*}

These regularization terms are all Fr\'echet differentiable. Let $(\vz, r) \in \mathcal{M}$ and consider perturbations~$(\vu, \delta) \in \mathcal{M}$. The perturbation of the curve's parametrization $\vh$ is assumed to be defined by $n+1$ control points $\boldsymbol{Q} = (\vu^{0}, \ldots, \vu^{(n)})$. 

\begin{lemma}
Let $\vz$, $r$, $\vP$, $\vu$, $\delta$, $\boldsymbol{Q}$ be defined as above. There hold
\begin{multline*}
	\D \Psi_1 [\vz] \vu
	= \int_{0}^{1} \Bigg(
	2 \, \frac{\vz''(\tau) \cdot \vu''(\tau)}{|\vz'(\tau)|^3}
	- 3 \, |\vz''(\tau)|^2 \, \frac{\vz'(\tau) \cdot \vu'(\tau)}{|\vz'(\tau)|^5} \\
	- 2 \, \big( \vz'(\tau) \cdot \vu''(\tau) + \vz''(\tau) \cdot \vu'(\tau) \big) \, \frac{\vz'(\tau) \cdot \vz''(\tau)}{|\vz'(\tau)|^5} 
	+ 5 \vz'(\tau) \cdot \vu'(\tau) \, \frac{(\vz'(\tau) \cdot \vz''(\tau)) ^ 2}{|\vz'(\tau)|^7} \Bigg) \; \D \tau
	 \, ,
\end{multline*}
\begin{multline*}
	\D \Psi_2 [\vP] \boldsymbol{Q} =
	2 \, \sum_{j=0}^{n-1} \left[ \left( \frac{1}{n-1} \sum_{i = 0}^{n - 1} \left| \vx^{(i+1)} - \vx^{(i)} \right| \right) - \left| \vx^{(j+1)} - \vx^{(j)} \right| \right] \\
	\Bigg[ \left( \frac{1}{n - 1} \sum_{i=0}^{n-1} \frac{ (\vx^{(i+1)} - \vx^{(i)}) \cdot (\vu^{(i+1)} - \vu^{(i)})}{\left| \vx^{(i+1)} - \vx^{(i)} \right|} \right) 
	- \frac{(\vx^{(j+1)} - \vx^{(j)}) \cdot (\vu^{(j+1)}- \vu^{(j)}) }{\left| \vx^{(j+1)} - \vx^{(j)} \right|} \Bigg] ,
\end{multline*}
as well as
\begin{align*}
	\D \Psi_3 [r] \delta = 2 \int_0^1 r'(\tau) \, \delta'(\tau) \; \D \tau \, .
\end{align*}
Furthermore, we have
\begin{align*}
	\D \Psi_4 [r] \delta = \int_0^1 \D f_{r_{\min}}^l[r] \delta(\tau) \; \D \tau
\end{align*}
with
\begin{align*}
	\D f_{r_{\min}}^l[r(t)] \delta(t) =
	\begin{cases}
		0 \, , & \quad r(t) \leq r_{\min} - l \, , \\[1ex]
		- \frac{1}{2 \pi l^3} \, \delta(t) (r(t) + l - r_{\min}) \, \exp\left({-\frac{(r(t) + l - r_{\min})^2}{2 \, l^2}}\right) , & \quad r(t) > r_{\min} - l \, ,
	\end{cases}
\end{align*}
for $t \in [0, 1]$.
\end{lemma}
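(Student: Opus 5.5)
The plan is to differentiate each of the four functionals directly as compositions of smooth maps, using the chain rule, and to justify Fréchet differentiability through continuity of the resulting linear maps on the relevant spaces. All four functionals are integrals or finite sums of pointwise nonlinear functions of $\vz'$, $\vz''$, $r$, $r'$ or of control point differences. Differentiability therefore reduces to differentiability of the integrands, together with uniform control of the remainders. That control comes from the $C^2$ norm on $\vz$ and the $C^1$ norm on $r$, since $[0,1]$ is compact.

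For $\Psi_1$, I would write the integrand as $\kappa^2|\vz'| = |\vz'\times\vz''|^2/|\vz'|^5$. By Lagrange's identity, $|\vz'\times\vz''|^2 = |\vz'|^2|\vz''|^2 - (\vz'\cdot\vz'')^2$, so the integrand becomes
\[
|\vz''|^2|\vz'|^{-3} - (\vz'\cdot\vz'')^2|\vz'|^{-5}.
\]
The map $(a,b)\mapsto |b|^2|a|^{-3}-(a\cdot b)^2|a|^{-5}$ is $C^\infty$ on $\{a\neq 0\}\times\RR^3$, and regularity of the spine gives $|\vz'|$ bounded below on $[0,1]$. Hence the Nemytskii-type operator $\vz\mapsto$ integrand is Fréchet differentiable from $C^2$ to $C^0$, and the remainder is bounded by $C\|\vu\|_{C^2}^2$ by Taylor's theorem with uniform bounds on a compact neighbourhood. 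The directional derivative is computed term by term:
\begin{itemize}
\item $\D(|b|^2)=2b\cdot\delta b$;
\item $\D(|a|^{-3}) = -3|a|^{-5}a\cdot\delta a$;
\item $\D((a\cdot b)^2)=2(a\cdot b)(\delta a\cdot b + a\cdot\delta b)$;
\item $\D(|a|^{-5})=-5|a|^{-7}a\cdot\delta a$.
\end{itemize}
Substituting $a=\vz'$, $b=\vz''$, $\delta a=\vu'$, $\delta b=\vu''$ and integrating gives exactly the four stated terms.

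For $\Psi_2$, the functional is a finite-dimensional smooth function of the control points, provided consecutive points are distinct. I would apply $\D|\vx^{(i+1)}-\vx^{(i)}|\,\boldsymbol{Q} = (\vx^{(i+1)}-\vx^{(i)})\cdot(\vu^{(i+1)}-\vu^{(i)})/|\vx^{(i+1)}-\vx^{(i)}|$ and then the chain rule to the squared sum, which produces the factor $2$. This is only ordinary calculus. The hypothesis that the control points are distinct is implicit, and I would state it.

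For $\Psi_3$, the functional is a continuous quadratic form in $r'$. Expanding $|r'+\delta'|^2$ gives the derivative $2\int r'\delta'$ with remainder $\int|\delta'|^2\le\|\delta\|_{C^1}^2$.

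For $\Psi_4$, I would first check that $f=f^l_{r_{\min}}$ is $C^1$ on $\RR$. Both pieces equal $1/(2\pi l)$ at $r=r_{\min}-l$. The derivative of the Gaussian piece is $-\frac{1}{2\pi l^3}(r+l-r_{\min})\exp(\cdots)$, which vanishes there and so matches the zero derivative of the constant piece. Since $f'$ is moreover globally Lipschitz (it is piecewise smooth with bounded second derivative), the mean value theorem gives the pointwise bound $|f(r+\delta)-f(r)-f'(r)\delta|\le C|\delta|^2$. Integrating this bound over $[0,1]$ shows that the Nemytskii operator is differentiable in the sup norm, with derivative $\int_0^1 f'(r(\tau))\delta(\tau)\,\D\tau$, as stated.

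The main obstacle is the bookkeeping in $\Psi_1$: matching the four chain-rule terms with the signs and powers in the statement, and justifying the uniform Taylor remainder, which needs $|\vz'|\ge c>0$. For $\Psi_4$, the only subtle point is the $C^1$ gluing at $r_{\min}-l$.
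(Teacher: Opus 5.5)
Your proposal is correct. For $\Psi_2$ and $\Psi_3$ it matches the paper's argument, which only applies the chain rule with $\D(|\cdot|)[\vx]\vu = \vx\cdot\vu/|\vx|$ and $\D((\cdot)^2)[x]u = 2xu$. Your remark that consecutive control points must be distinct is a hypothesis the paper leaves implicit.

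The routes differ for $\Psi_1$ and $\Psi_4$:

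\textbf{$\Psi_1$.} The paper gives no computation and only refers to \cite[Lmm.~5.21]{Knoel2023}. You rewrite the integrand via Lagrange's identity as $|\vz''|^2|\vz'|^{-3}-(\vz'\cdot\vz'')^2|\vz'|^{-5}$. This makes the lemma self-contained and reproduces the four stated terms with the correct signs and powers. It also shows why $\Psi_1$ is smooth at points of vanishing curvature, where $\kappa$ itself is not differentiable but $\kappa^2$ is.

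\textbf{$\Psi_4$.} The paper expands the exponential directly to get $f_{r_{\min}}^l(r+\delta)-f_{r_{\min}}^l(r)-\D f_{r_{\min}}^l[r]\delta=\mathcal{O}(\delta^2)$. This tacitly assumes that $r(t)$ and $r(t)+\delta(t)$ both lie on the Gaussian branch, and it does not spell out uniformity in $t$. Your argument uses the $C^1$ gluing at $r_{\min}-l$, global Lipschitz continuity of the derivative, and the mean value theorem. It therefore also covers points where $r(t)$ sits at or near the threshold, where a perturbation can switch branches. It also gives a remainder bound $C\|\delta\|_\infty^2$ that is uniform in $t$.

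The paper's version is shorter and shows where the formulas come from; yours is the more complete justification of Fr\'echet differentiability.
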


\begin{proof}
	For the Fr\'echet derivative of $\Psi_1$, see \cite[Lmm. 5.21]{Knoel2023} or \cite{WuLi2011, CapGri2021}.

	The Fr\'echet derivatives of the second and third term follow from
	\begin{align*}
		\D (| \cdot |) [\vx] \vu = \frac{\vx \cdot \vu}{| \vx |}
		\qquad \text{and} \qquad
		\D (( \,\cdot\, )^2) [x] u = 2 \, x u \, ,
	\end{align*}
	respectively.
	Finally, let $\delta$ be a sufficiently small perturbation of $r$. Then,
	\begin{align*}
		f_{r_{\min}}^l(&r(t) + \delta(t)) - f_{r_{\min}}^l(r(t)) - \D f_{r_{\min}}^l[r] \delta(t) \\
		&= \frac{1}{2 \pi l} \, \exp\left(-\frac{(r(t) + l - r_{\min})^2}{2 \, l^2} \right) \\
		&\hspace{4em}\Bigg[ \exp\left(-\frac{\delta^2(t) + 2 \delta(t) ( r(t) + l - r_{\min} )}{2 \, l^2} \right)
		- 1 + \frac{1}{l^2} \delta(t) ( r(t) + l - r_{\min} ) \Bigg] \\
		&= \frac{1}{2 \pi l} \, \exp\left(-\frac{(r(t) + l - r_{\min})^2}{2 \, l^2} \right) \\
		&\hspace{4em}\Bigg[ -\frac{\delta^2(t)}{2 l^2} - \frac{1}{l^2} \delta(t) ( r(t) + l - r_{\min} ) + \mathcal{O}(\delta^2(t))
		+ \frac{1}{l^2} \delta(t) ( r(t) + l - r_{\min} ) \Bigg] \\
		&= \mathcal{O}(\delta^2(t)) \, ,
	\end{align*}
	as $\Vert \delta \Vert_{1,\infty} \to 0$, for $t \in [0, 1]$. This finishes the proof.
\end{proof}

\section{Implementation and Numerical Examples}
\label{sec:NumericalExamples}

To implement our method for solving the optimization problem \eqref{eq:reg_optim_prob} as described in the previous section, we need to discretize the far field operator. We use appropriate linear combinations of the vector spherical harmonics defined in \eqref{eq:vec_sph_harmonics} to obtain complete orthonormal systems $B^\pm$ of the subspaces $V^\pm \subseteq L^2_t(\SS^2)$ defined  in \eqref{eq:V_pm} (see also \cite[Rem. 4.4]{FerGri2023}). Because of the properties of these subspaces, $B = B^+ \cup B^-$ is a complete orthonormal system in $L^2_t(\SS^2)$ and we can represent the far field operator by the projections
\[
  \left\langle \mathcal{F}_D \vphi, \vpsi \right\rangle_{L^2_t} \, , \qquad \vphi, \vpsi \in B \, .
\]
This representation is advantageous, as it also naturally yields representations of the component operators $\mathcal{F}_D^{pq}$, $p$, $q \in \{ +, - \}$, associated with the fields of specific helicities. For the numerical implementation, we restrict to spherical harmonics of degree at most $N \in \NN$, yielding a finite basis $B_N$ with $2N(N+2)$ elements.

Typically, we use B-spline curves with 20 control points in our optimizations. Hence, for the simplest case of a constant radius function along the length of the tube, there are 60 real degrees of freedom for our domains. Thus, even for a modest value of $N = 3$, we need to carry out $2N (N+2) \cdot 60 + 1 = 1801$ numerical solves of the Maxwell system in each iteration step. It cannot be stressed enough that the efficiency of the solver used for the EFIE is thus of utmost importance. Note however, that the boundary integral operator is the same for all these problems, they differ only in the boundary condition prescribed on $\partial D$. Thus, it makes sense to employ a direct solver for the corresponding linear systems, as the LU decomposition only needs to be computed once and the solutions of the individual problems are then easily obtained by forward and backward substitution. For the discretization of the boundary integral equations, the Python library \texttt{bempp-cl} (\cite{BetScr2021}, see also \url{https://bempp.com}) is employed, in which implementations of $\vS$, $\boldsymbol{\Psi}_{\mathrm{SL}}$ and $\boldsymbol{\Psi}_{\mathrm{SL}}^{\infty}$ from Section \ref{sec:OptDesignAlgorithm} are all available. Rao-Wilton-Glisson (RWG) basis functions are used to represent the solutions of the integral equations. However, the implementation of the boundary condition \eqref{eq:domain_derivative} requires additional work, as implementations of the surface differential operators are not readily available. In particular, a naive implementation leads to instabilities as pairing of RWG basis functions with scaled N\'ed\'elec basis functions violates the discrete inf-sup condition. A barycentric refinement of the mesh and corresponding Buffa-Christiansen basis functions for the dual basis are used to restore stability, as already described in the \red{prior} work \cite{AreKnoSch2026}. Moreover, all surface operators had to be reimplemented using just-in-time compilation techniques to keep the overall computational time at an acceptable level.

The boundary element mesh on the tube surface is generated from a grid defined on the parameter rectangle $(0, 1) \times (-\pi, \pi)$ of the parametrization \eqref{eq:tubelike_param} of the tube body. This step is carried out automatically in each step of the iteration. The degree of vector spherical harmonics needs to be chosen such that the far fields can be well approximated by these functions. The asymptotics discussed in \cite[Rem. 4.1]{GriSyl2018} show that we require at least $N > kR$ for an obstacle contained in a ball of radius $R$, where $k$ denotes the wave number.

Below, we give four examples of optimized tubular shapes, represented by $(\vz, r) \in \mathcal{M}$. In the first three examples we fix $r$ to the constant function $r \equiv 1.5$ while in the last example we carry out an optimization with varying $\vz$ and $r$. In addition to the update in each iteration step according to the BFGS scheme,  we further reduce the step length to satisfy the condition
\begin{align*}
	\| r \|_{\infty} \| \kappa \|_{\infty} < 1 \, ,
\end{align*}
where $\kappa$ denotes the curvature of the curve given by $\vz$. This condition prevents the object from having local self-intersections \cite[Thm.\ 1]{LitSimDur1999}. 
The optimization terminates if the relative update of the control points and (if applicable) the relative change of the radius function fall below a given tolerance.

\begin{figure}[p]
	\begin{tabular}{ccc}
		\begin{tikzpicture}
			\node[anchor=south west,inner sep=0] (image) at (0,0) {\includegraphics[width=0.3\textwidth]{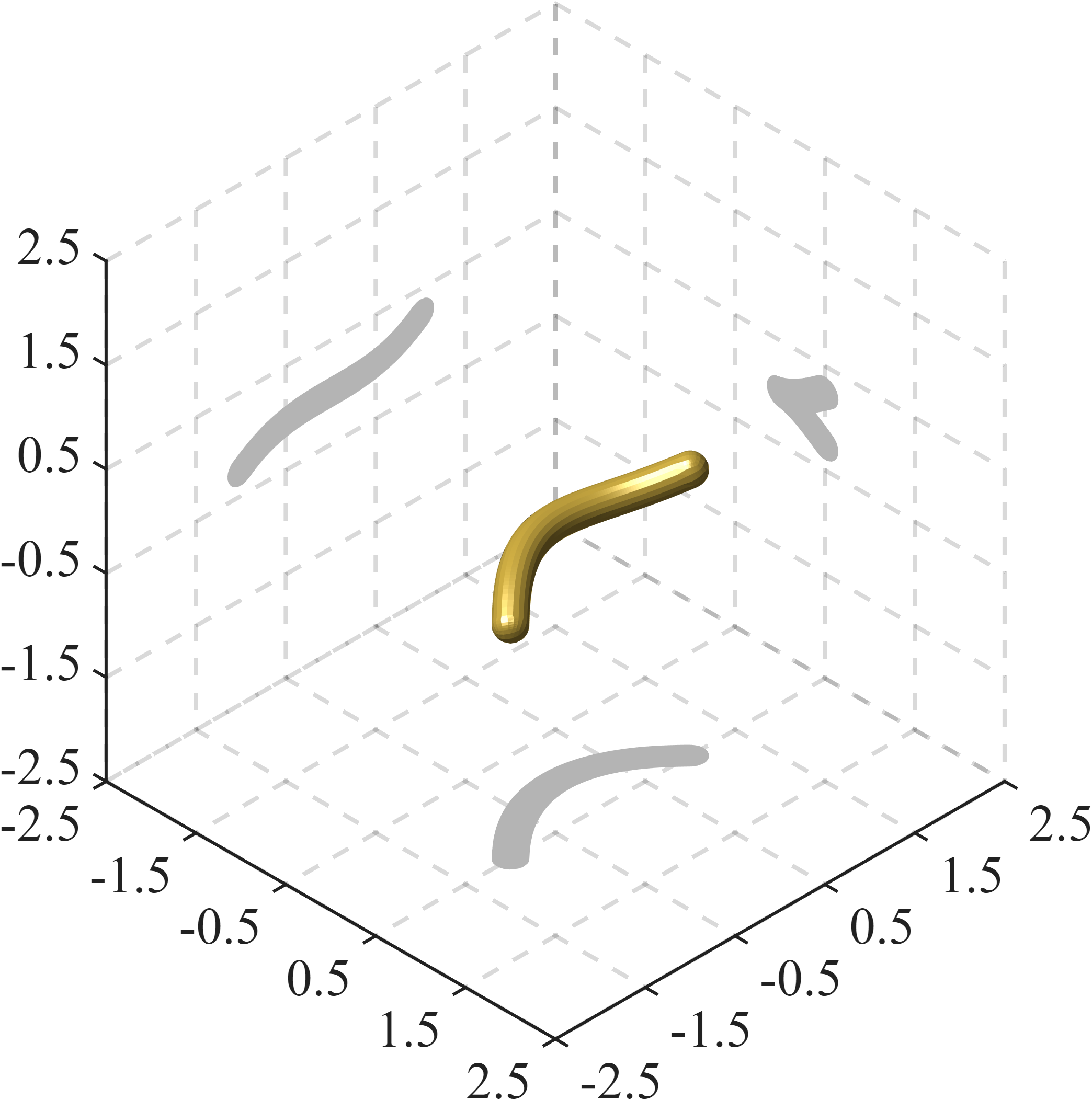}};
			\node[anchor=south west,inner sep=0] (image) at (5.5,0) {\includegraphics[width=0.3\textwidth]{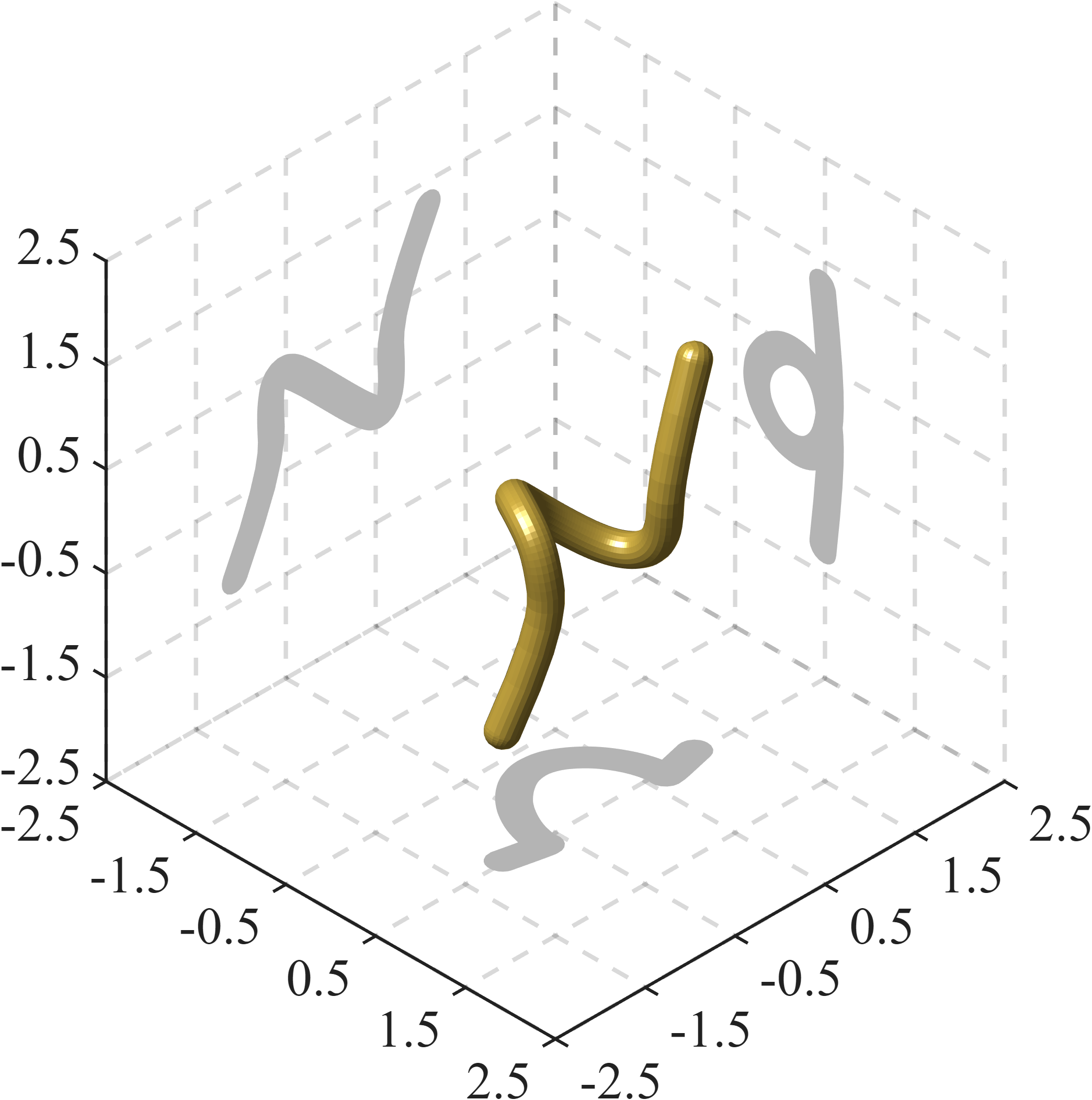}};
			\node[anchor=south west,inner sep=0] (image) at (11,0) {\includegraphics[width=0.3\textwidth]{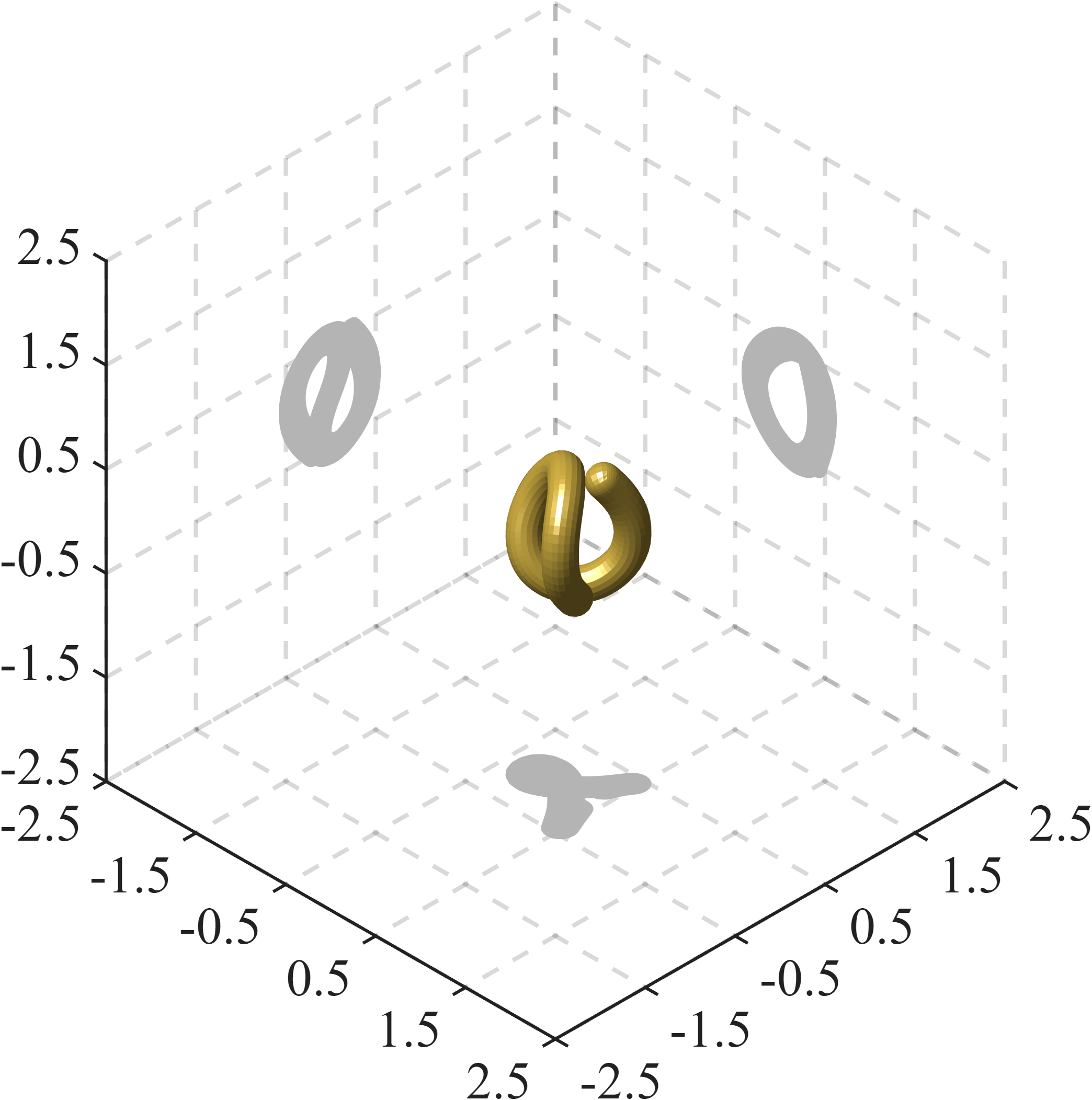}};
			\node at (2.75,-0.33) {\bf initial guess};
			\node at (8.25,-0.33) {\bf iteration 4};
			\node at (13.75,-0.33) {\bf iteration 44};
		\end{tikzpicture} 
	\end{tabular}
	\caption{initial guess, one intermediate and the final result for Example 1.}
	\label{fig:ex1}
\end{figure}

\begin{figure}[p]
	\begin{tabular}{ccc}
		\begin{tikzpicture}
			\node[anchor=south west,inner sep=0] (image) at (0,0) {\includegraphics[width=0.3\textwidth]{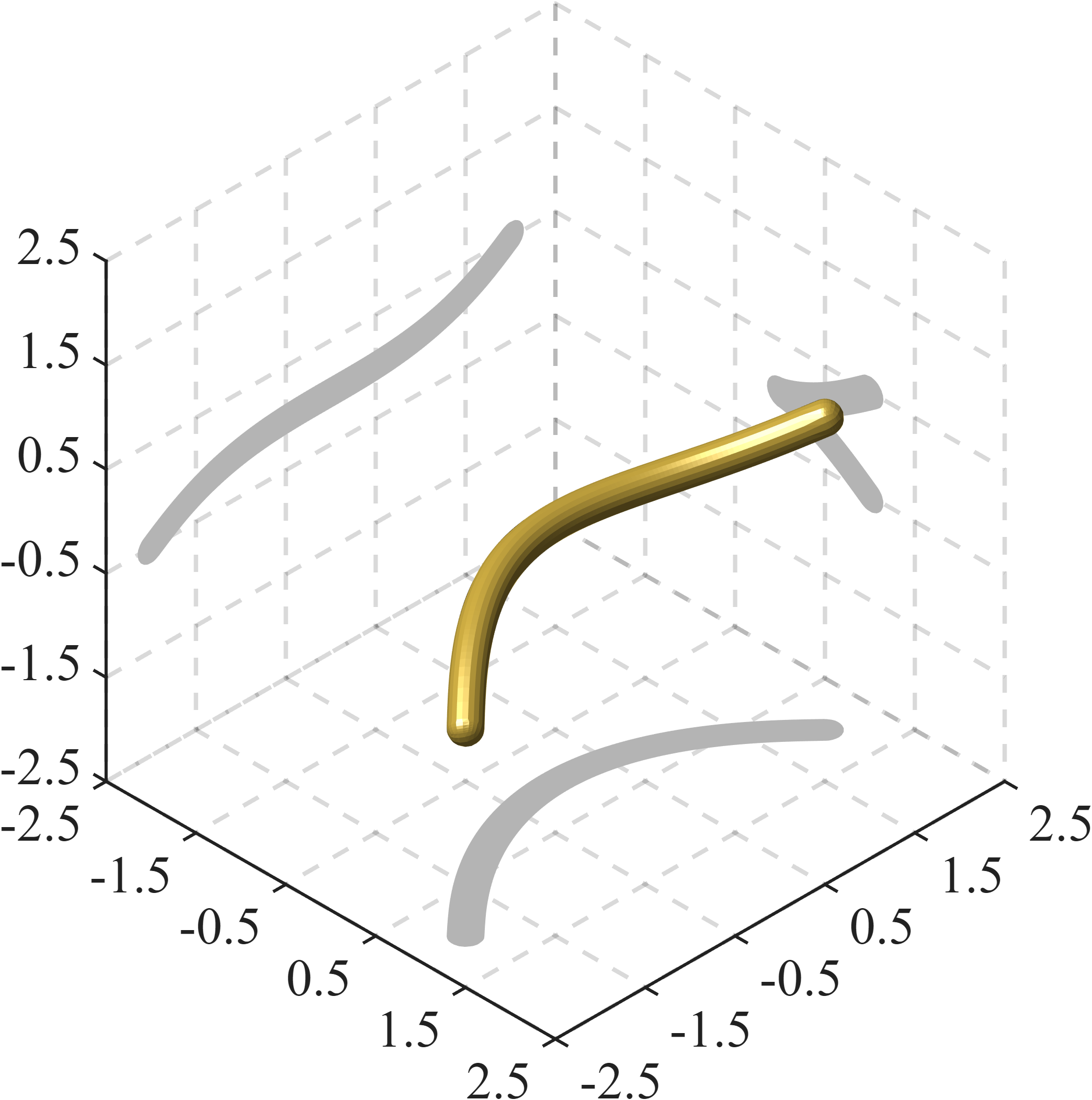}};
			\node[anchor=south west,inner sep=0] (image) at (5.5,0) {\includegraphics[width=0.3\textwidth]{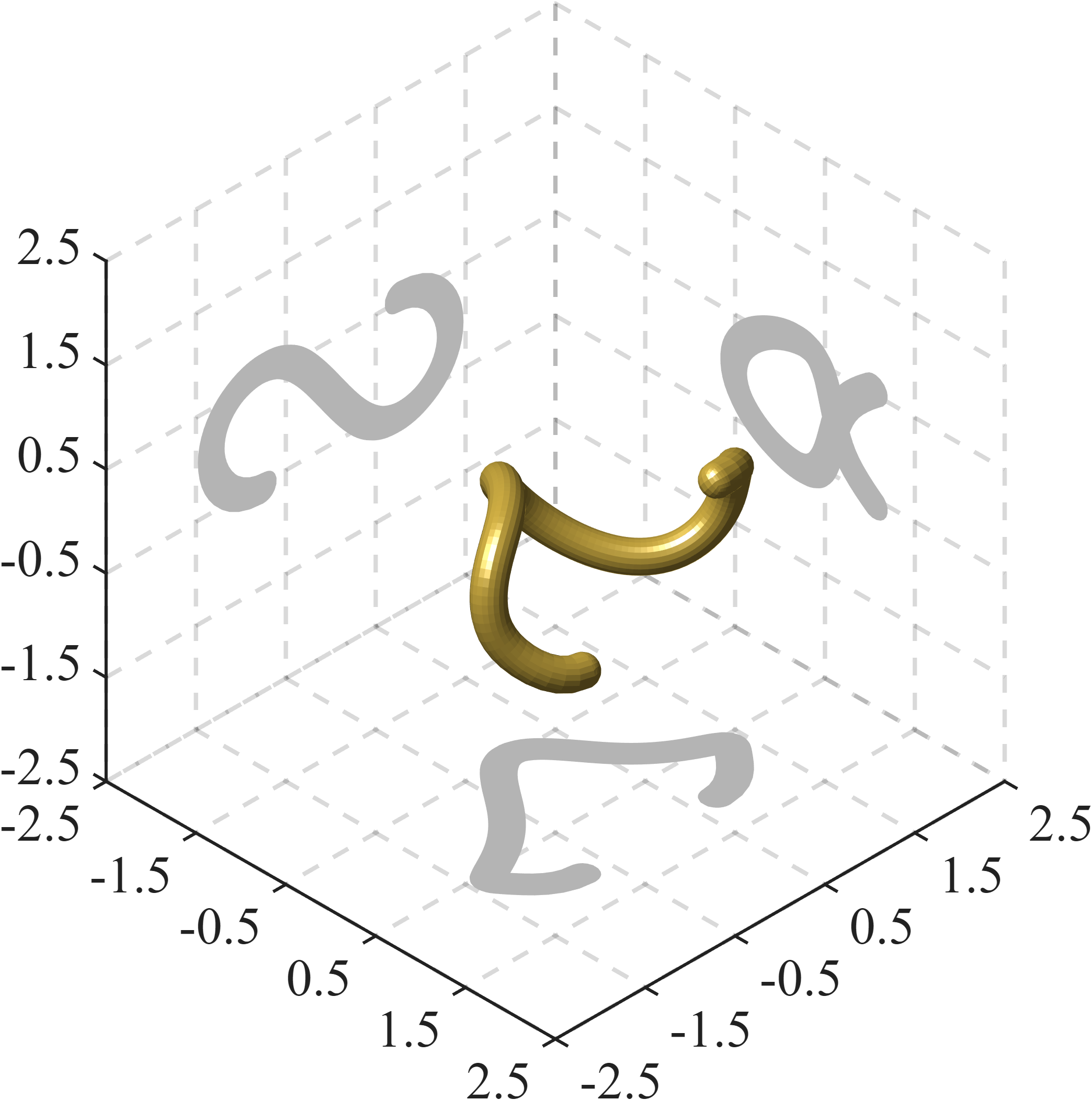}};
			\node[anchor=south west,inner sep=0] (image) at (11,0) {\includegraphics[width=0.3\textwidth]{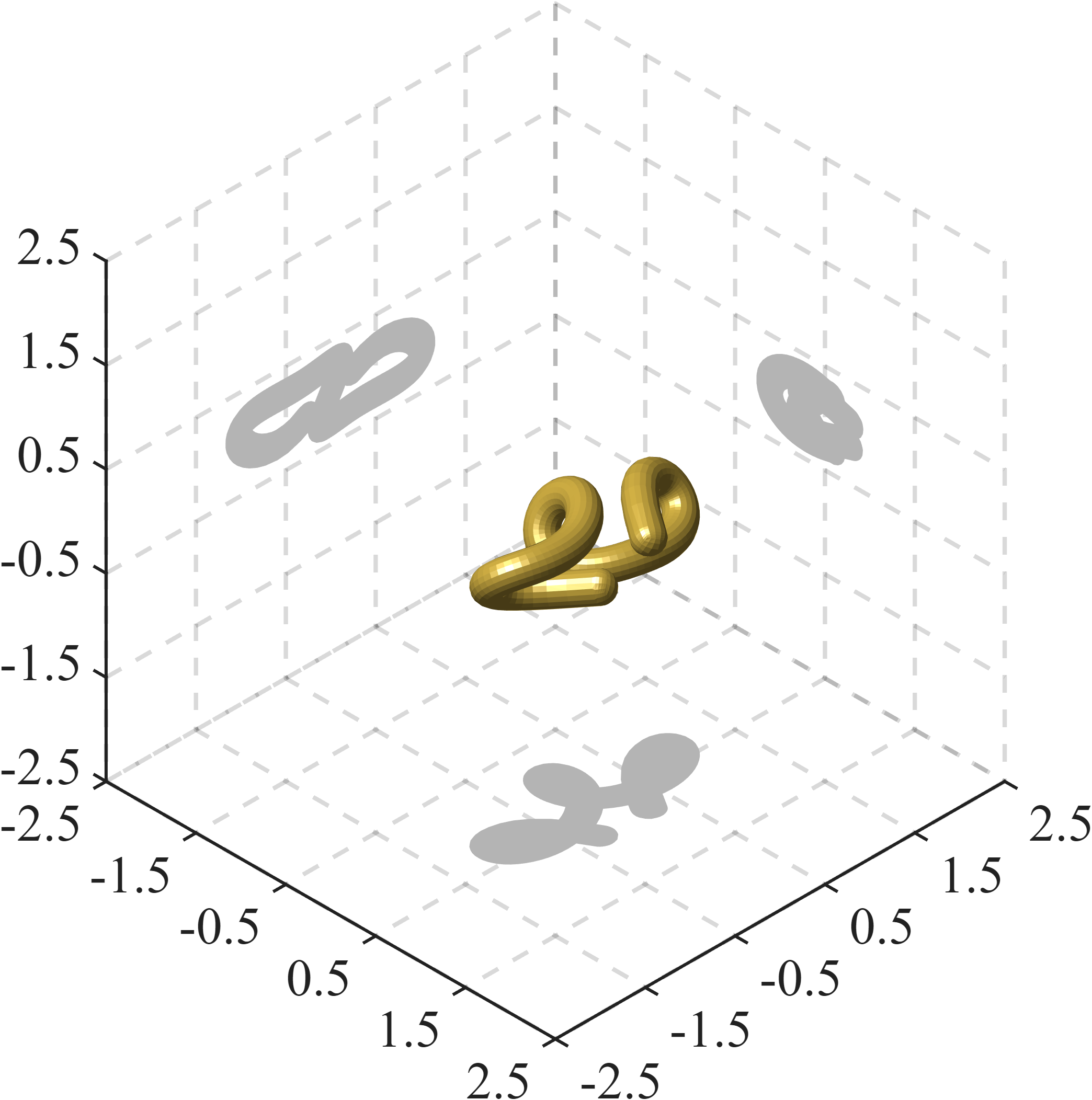}};
			\node at (2.75,-0.33) {\bf initial guess};
			\node at (8.25,-0.33) {\bf iteration 11};
			\node at (13.75,-0.33) {\bf iteration 31};
		\end{tikzpicture} 
	\end{tabular}
	\caption{initial guess, one intermediate and the final result for Example 2.}
	\label{fig:ex2}
\end{figure}

\begin{figure}[p]
	\begin{tabular}{ccc}
		\begin{tikzpicture}
			\node[anchor=south west,inner sep=0] (image) at (0,0) {\includegraphics[width=0.3\textwidth]{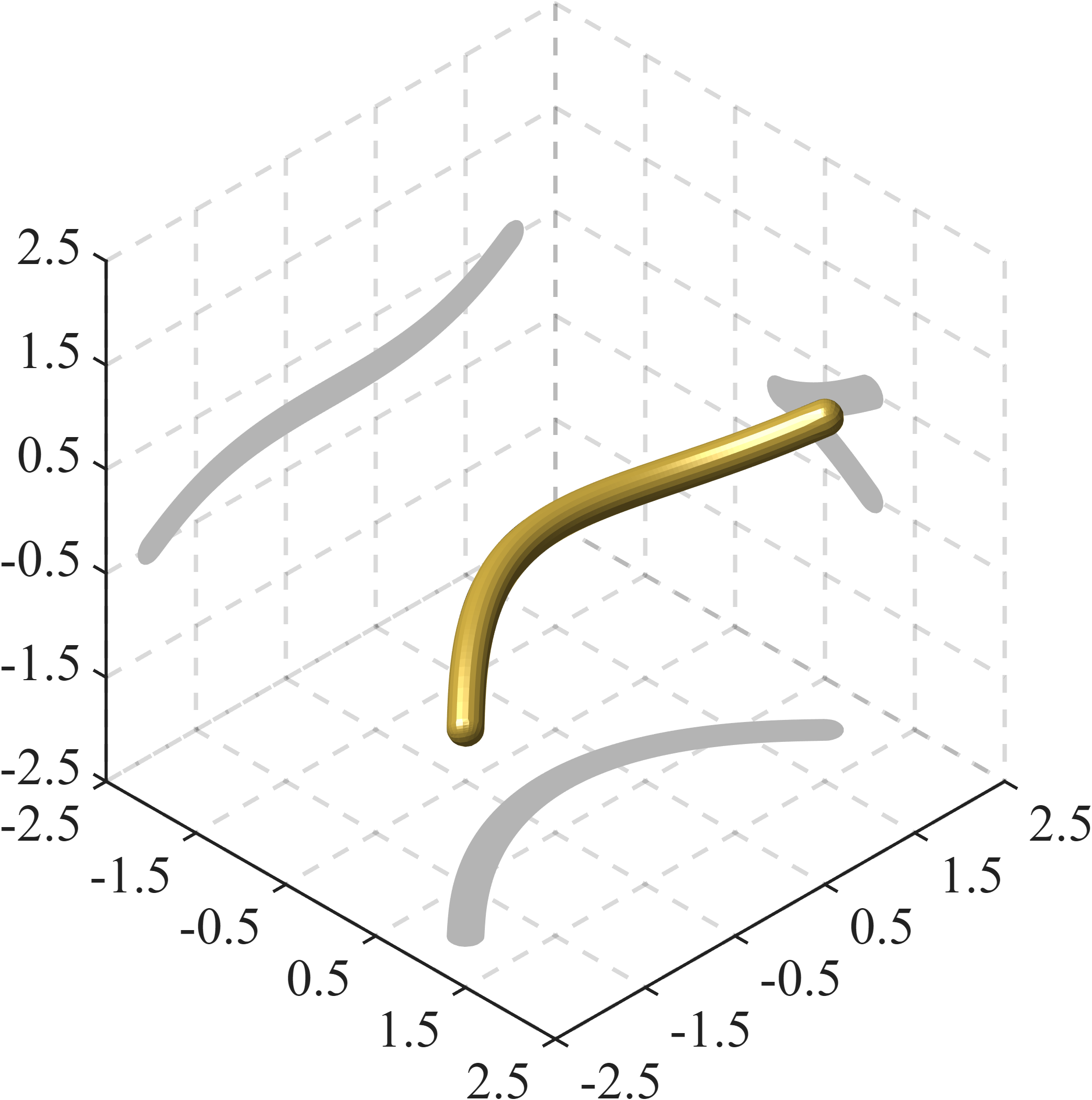}};
			\node[anchor=south west,inner sep=0] (image) at (5.5,0) {\includegraphics[width=0.3\textwidth]{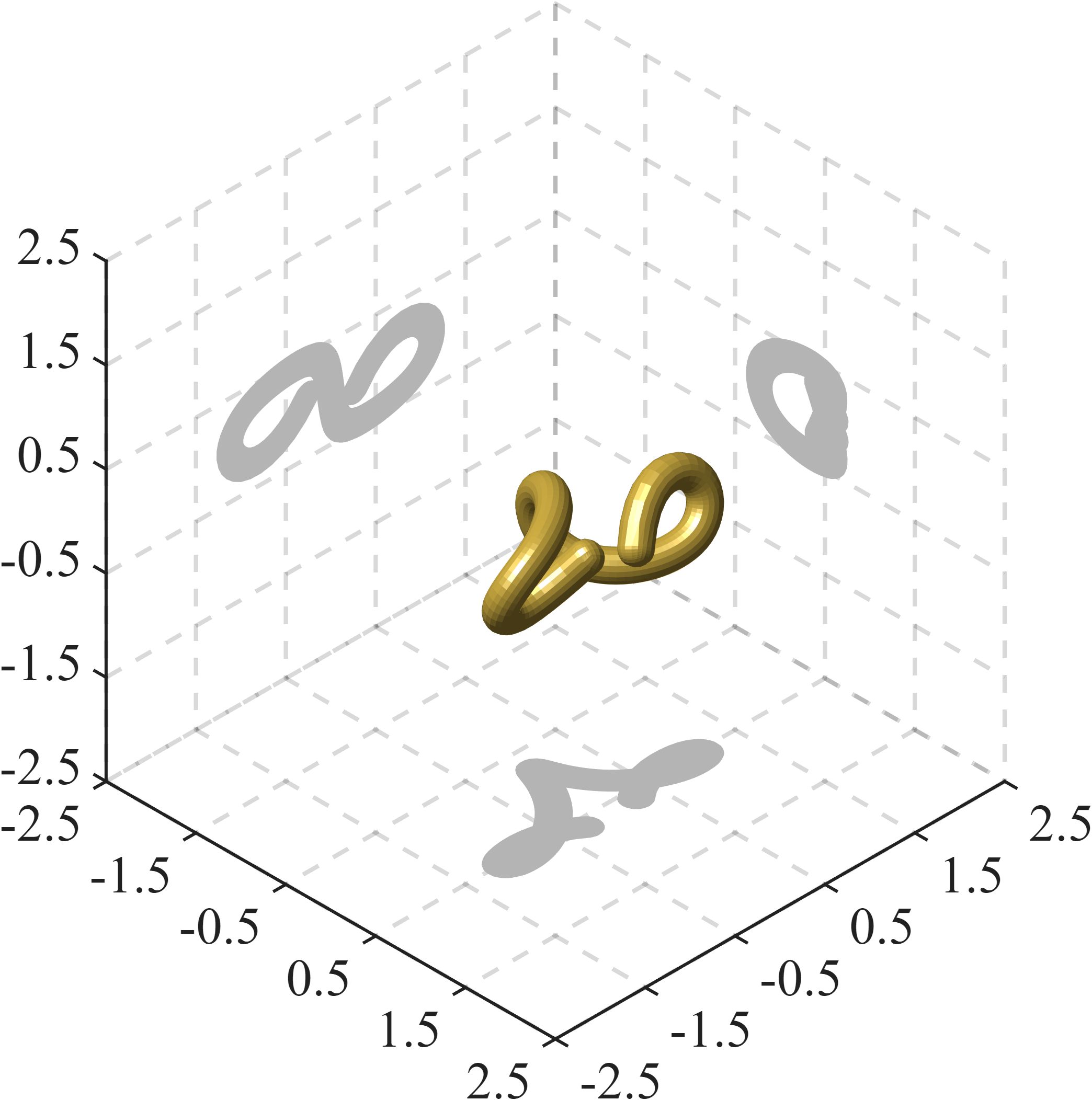}};
			\node[anchor=south west,inner sep=0] (image) at (11,0) {\includegraphics[width=0.3\textwidth]{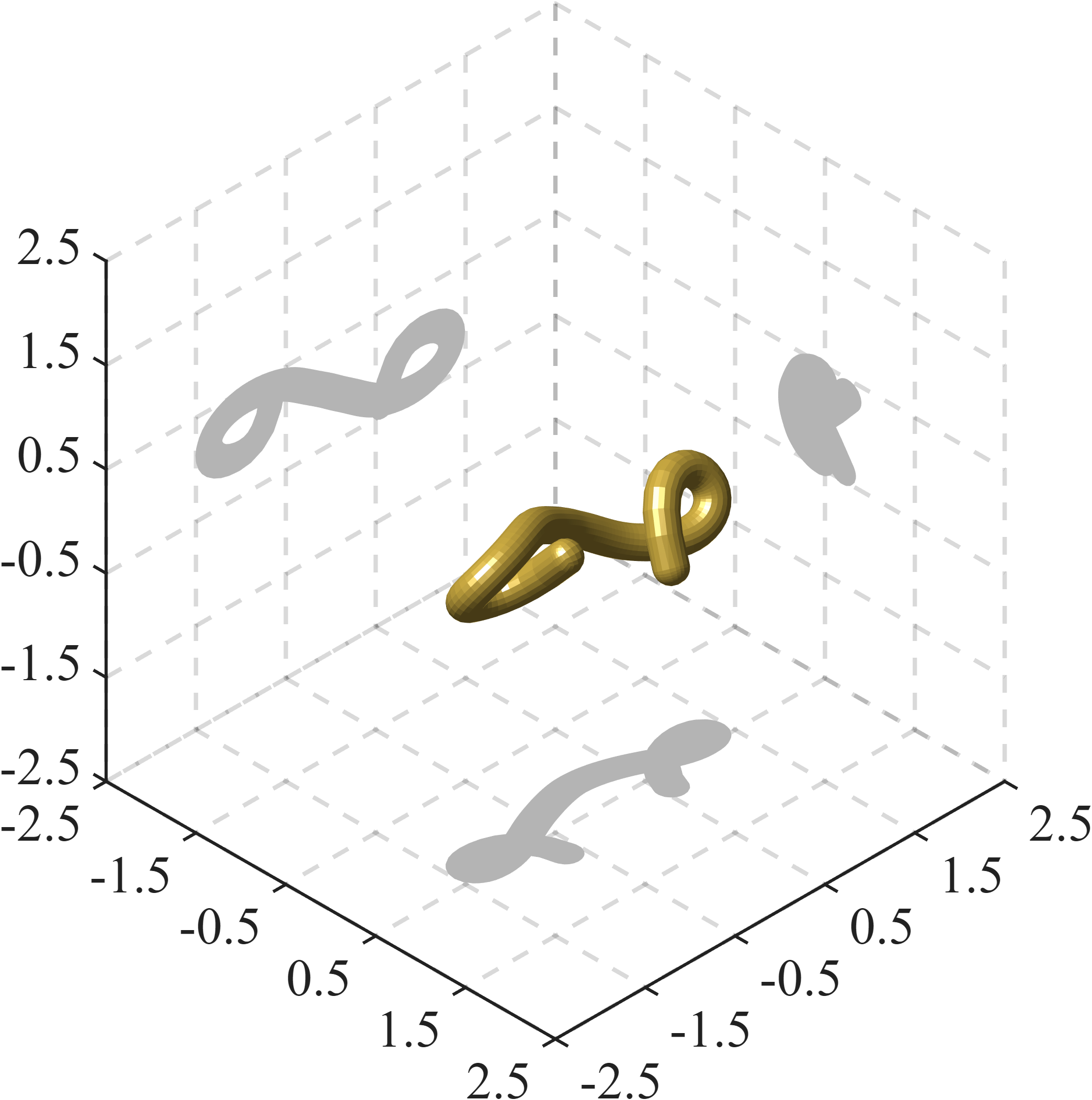}};
			\node at (2.75,-0.33) {\bf initial guess};
			\node at (8.25,-0.33) {\bf iteration 20};
			\node at (13.75,-0.33) {\bf iteration 44};
		\end{tikzpicture} 
	\end{tabular}
	\caption{initial guess, one intermediate and the final result for Example 3.}
	\label{fig:ex3}
\end{figure}


In all of the following examples, we use $\varepsilon_0 = 1$, $\mu_0 = 1$ and $\omega = 1$, so that the wavelength is $\lambda = 2\pi$ or, equivalently, $k = 1$. Note however, that by scaling corresponding objects may be obtained for any frequency where the perfect conductor model is valid. The parameters for the BFGS scheme and Armijo-type line search are chosen as in \cite{Knoel2023}, i.e., $\varepsilon_{\text{BFGS}} = 1\E{-5}$ and $\sigma = 1\E{-4}$, and we set $\delta = 0.5$.  As the algorithm would fail for an em-achiral initial configuration \red{because the gradient of the objective functional is not defined}, we always start the iteration with a slightly twisted tube.

\textbf{Example 1: } 
We initiate the optimization with an obstacle $D$ defined by a $3$rd-degree B-spline curve with control points $\boldsymbol{c}_j = 2 ( (j/20 - 1/2)^2, \, j/20 - 1/2, \, (j/20 - 1/2)^3 )^\top$, $j = 0, \ldots, 19$. The surface is discretized with a mesh and a corresponding RWG boundary element family with $6690$ degrees of freedom (DOFs).


The regularization parameters are chosen as $\alpha_1 = 5\E{-}3,\ \alpha_2 = 1\E{-}2$. As the radius function is assumed to be constant, the other regularization terms vanish. Due to the small size of the initial object, a value of~$N=2$ is sufficient to represent the far field operator. Thus 481 discretized boundary integral equations are solved in each iteration step. Starting with a relative em-chirality of approximately $2.67\%$ the BFGS scheme terminates after $44$ iterations yielding an object with a relative em-chirality of around $89.04\%$.
The results for the optimization are shown in Figure \ref{fig:ex1}.

\textbf{Example 2: } 
We initiate the optimization with an obstacle $D$ defined by a $3$rd-degree B-spline curve with control points $\boldsymbol{c}_j = 4 ( (j/20 - 1/2)^2, \, j/20 - 1/2, \, (j/20 - 1/2)^3 )^\top$, $j = 0, \ldots, 19$. The mesh again is chosen such that the RWG boundary element space has got dimension $6690$.
The regularization parameters are chosen as $\alpha_1 = 2.5\E{-}3,\ \alpha_2 = 1\E{-}2$. Due to the increased size of the initial object, we set $N=3$ and hence have to solve 1801 boundary integral equations in each iteration step. Starting with a relative em-chirality of approximately $2.8\%$ the BFGS scheme terminates after $31$ iterations yielding an object with a relative em-chirality of around $93.61\%$.
The results for the optimization are shown in Figure \ref{fig:ex2}.

\textbf{Example 3: } 
We initiate the optimization with same obstacle as in Example 2 and we also choose the same initial mesh and boundary element space as well as $N=3$. However, the regularization parameters are chosen as $\alpha_1 = 3\E{-}3,\ \alpha_2 = 1\E{-}2$.
Starting with a relative em-chirality of approximately $2.8\%$ the BFGS scheme terminates after $44$ iterations yielding an object with a relative em-chirality of around $94.11\%$.
The results for the optimization are shown in Figure \ref{fig:ex3}.

In all three examples we reliably achieve an optimized shape with a very high em-chirality measure. The size of the object may be influenced by appropriate choices of the initial object as well as the regularization parameter $\alpha_1$ that controls the overall curvature of the spine curve. As the following example shows, we were less successful in also optimizing the radius function along the length of the curve. Either, as in the example presented here, the radius becomes very small along sections of the tube or, by choosing a sufficiently large regularization parameter $\alpha_3$ and $\alpha_4$, this effect is prevented at the cost of stagnation of the optimization at rather low measures of em-chirality.

\begin{figure}[t]
	\begin{tabular}{ccc}
		\begin{tikzpicture}
			\node[anchor=south west,inner sep=0] (image) at (0,0) {\includegraphics[width=0.3\textwidth]{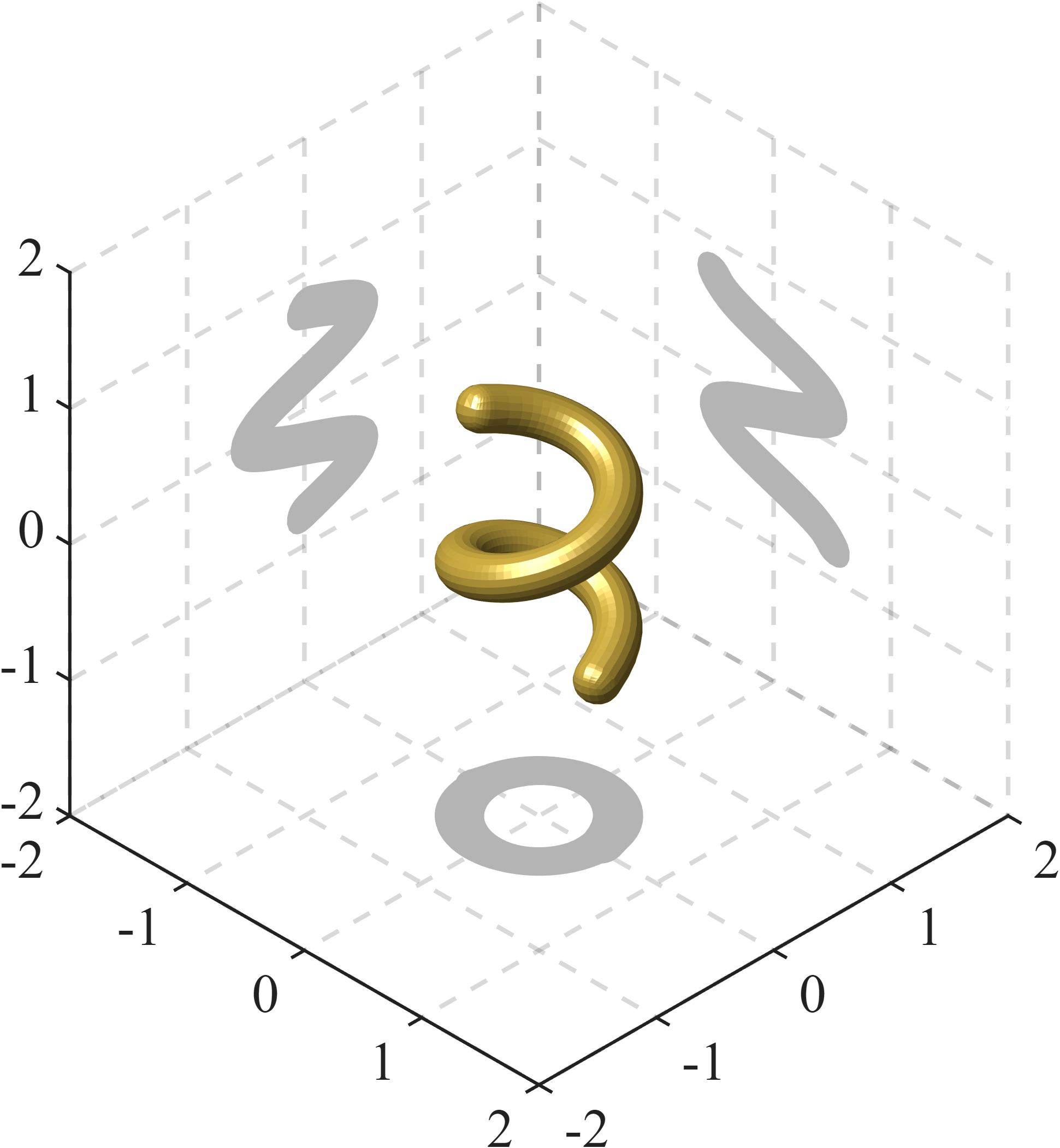}};
			\node[anchor=south west,inner sep=0] (image) at (5.5,0) {\includegraphics[width=0.3\textwidth]{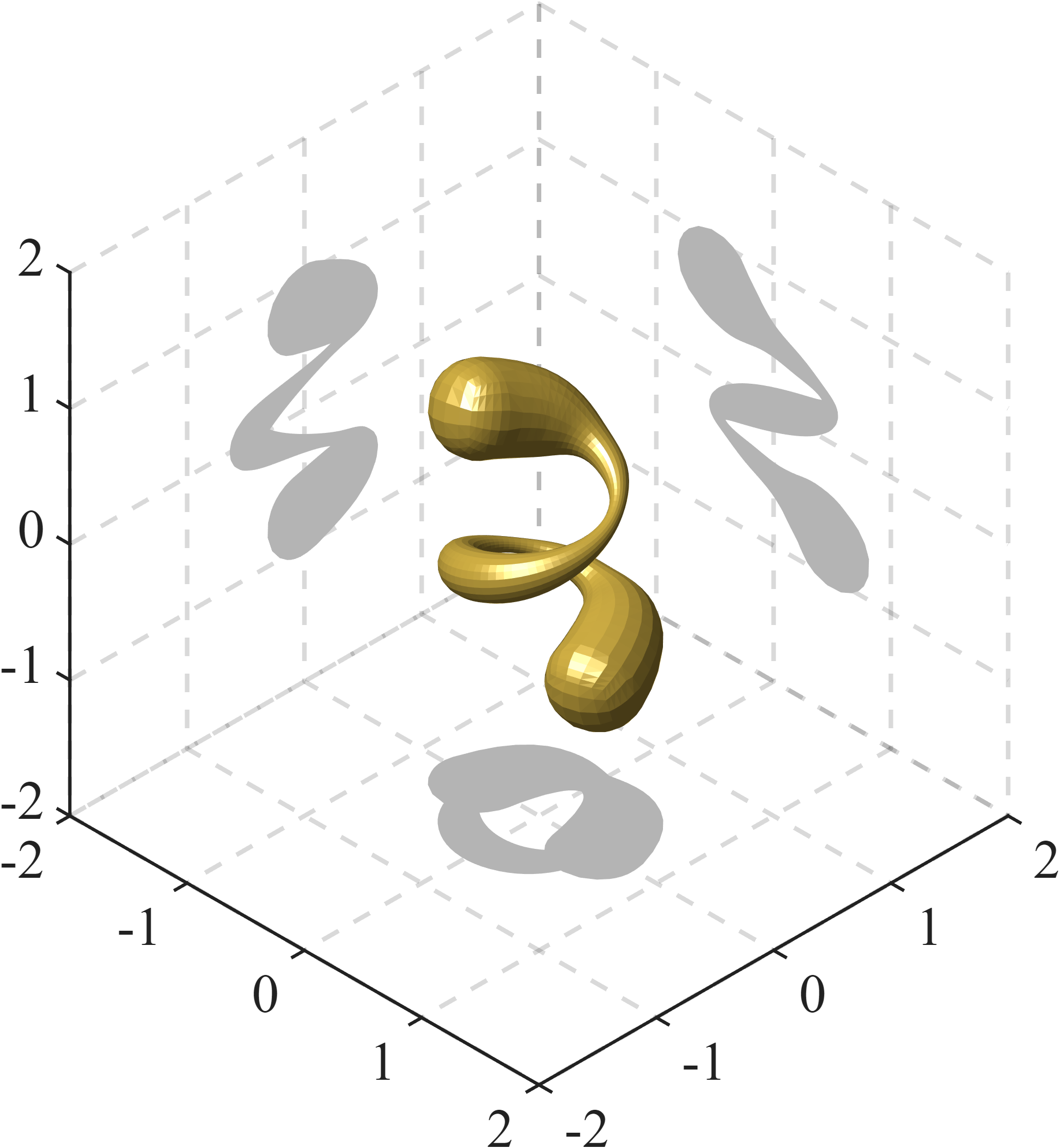}};
			\node[anchor=south west,inner sep=0] (image) at (11,0) {\includegraphics[width=0.3\textwidth]{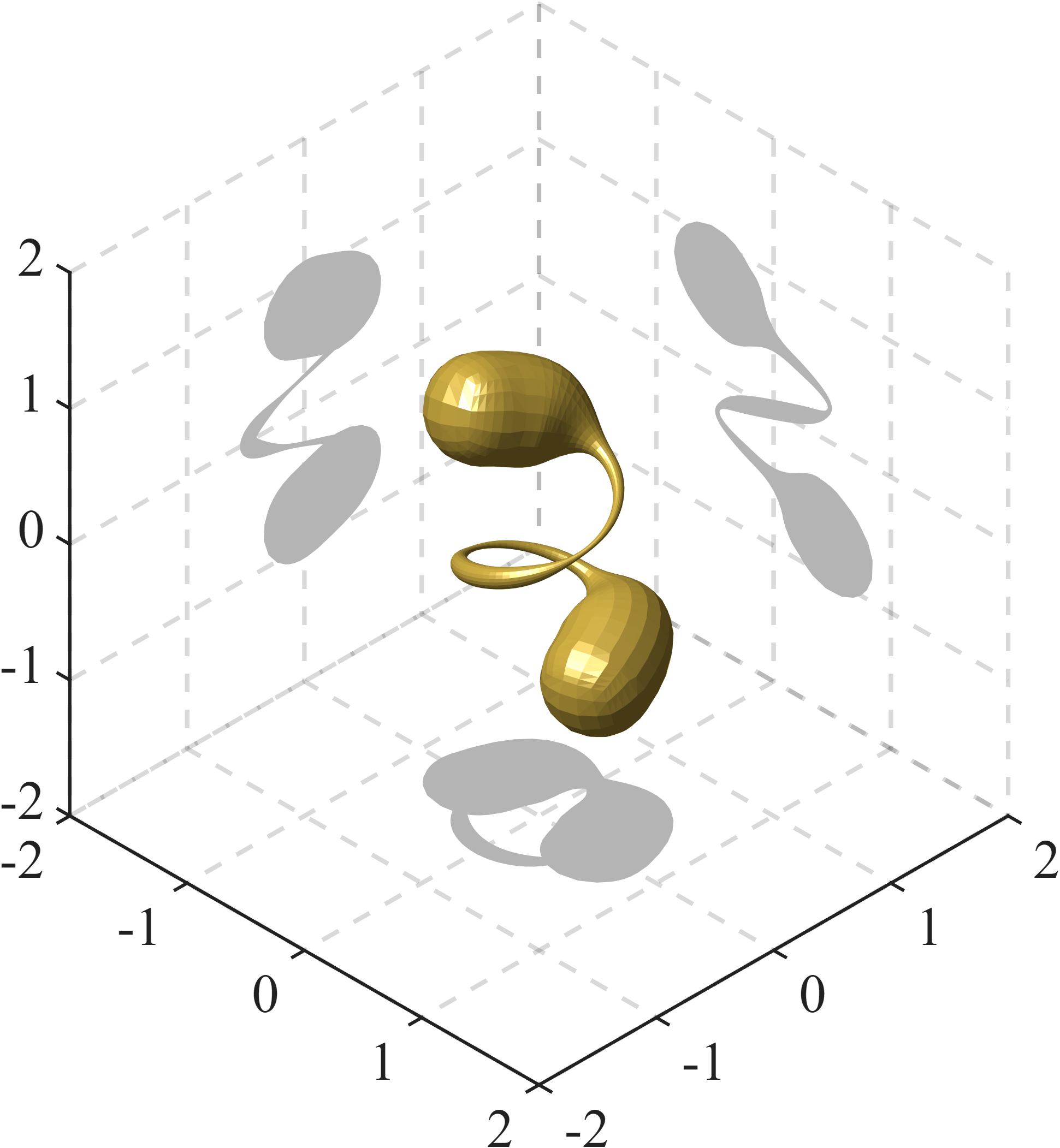}};
			\node at (2.75,-0.33) {\bf initial guess};
			\node at (8.25,-0.33) {\bf iteration 3};
			\node at (13.75,-0.33) {\bf iteration 7};
		\end{tikzpicture}%
	\end{tabular}
	\caption{initial guess, one intermediate and the final result for Example 4.}
	\label{fig:ex4}
\end{figure}

\begin{figure}[p]
    \begin{center}
     \includegraphics[width=0.32\textwidth]{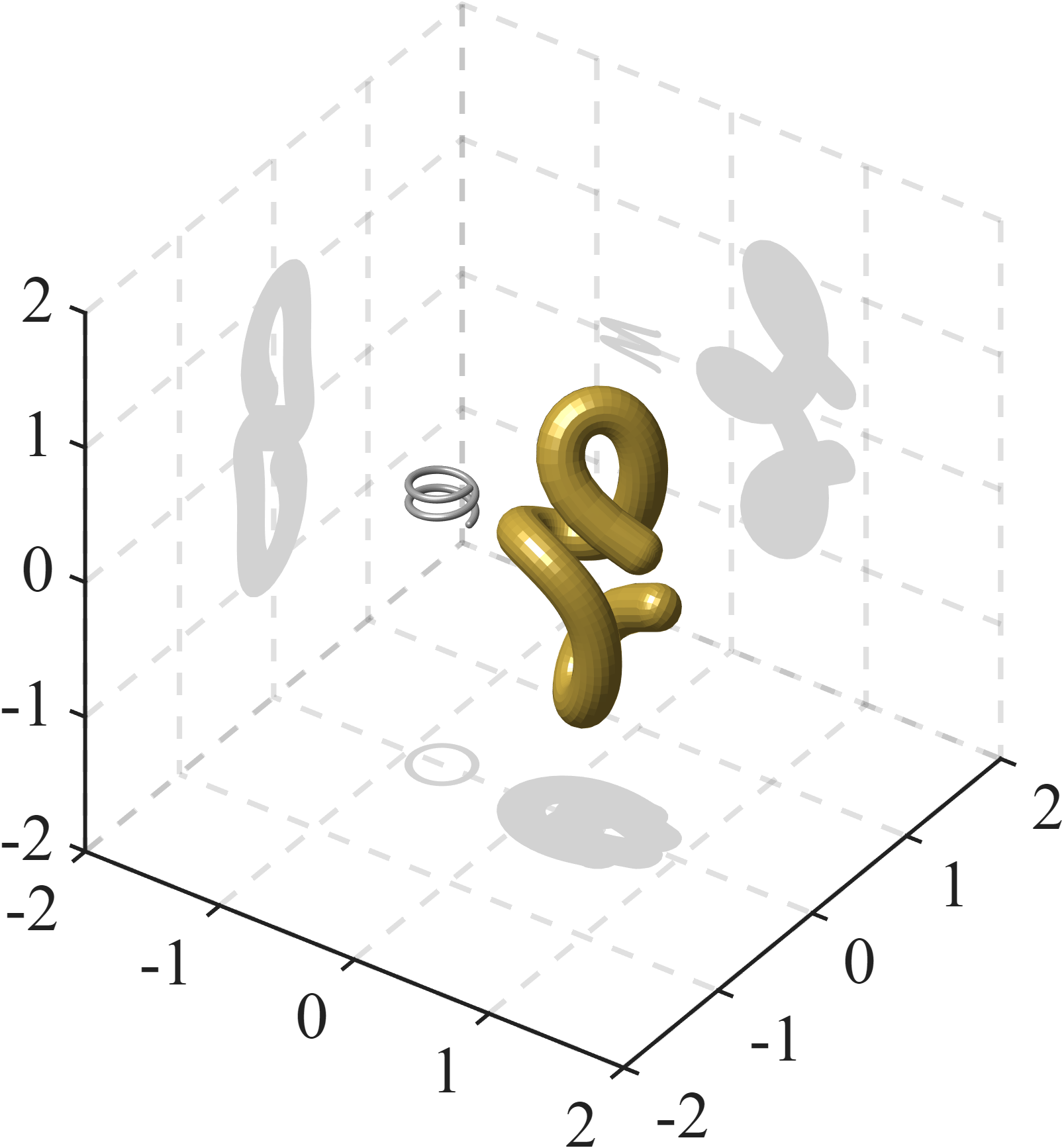}
			\hspace{4em}
			\begin{tikzpicture}[scale=0.8]
					\node[anchor=south west,inner sep=0] (image) at (0,0) {\includegraphics[trim={0 29 0 178},clip,width=0.4\textwidth]{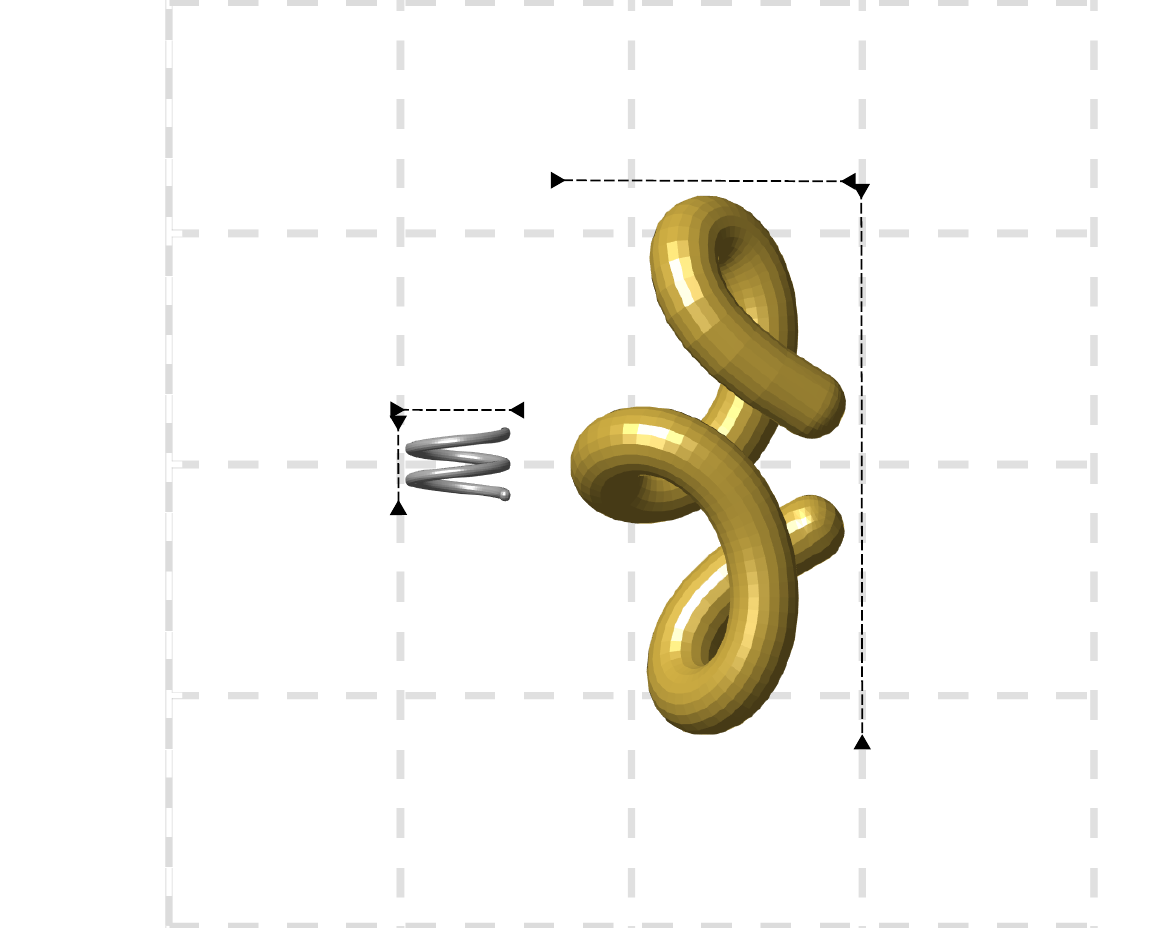}};
					\node at (3.3,  4.05) {\small $\SI{14.5}{\micro\metre}$};
					\node at (5.1,  5.7) {\small $\SI{37.8}{\micro\metre}$};
					\node at (2.05, 3.35) {\small $\SI{10.1}{\micro\metre}$};
					\node at (7.1,  3.35) {\small $\SI{74.2}{\micro\metre}$};
				\end{tikzpicture}
    \end{center}
	\caption{Size comparison of the optimized object from Example 2 and the silver helix from \cite{FerFruRoc2016}.
			The optimized object has a wire radius that is roughly six times larger, an overall height that is 10 times larger and a width that is 2.6 times larger.}
	\label{fig:comparison}
\end{figure}

\textbf{Example 4: } 
	In Figure \ref{fig:ex4}, we show an optimization result with respect to the radius function only.
	We start with a helix (one and a half turns) of radius $r \equiv 0.15$ that has a relative em-chirality of around $2.24\%$ and set $N=2$, $\alpha_3 = 1\E{-}3$, $\alpha_4 = 1\E{-}2$.
	For the fourth penalty term, we set $r_{\text{min}} = r / 10$ and $l = r_{\text{min}} / 20$.
	During the optimization, caps get bigger and the body gets thinner and after 7 iterations, we arrive at a relative em-chirality of around $73.59\%$.

We would like to discuss the feasibility of our designs by comparing to a typical object used in the literature as a highly em-chiral scatterer. The publication \cite{FerFruRoc2016} contains an example of a silver helix whose pitch, radii and number of revolutions have been optimized for high em-chirality at a frequency of $\lambda = \SI{200}{\micro\metre}$. At this wavelength, the perfect conductor is a good approximation for a silver object which allows us to compare our designs, appropriately scaled, to this helix \cite{JohChr1972}. Figure \ref{fig:comparison} provides a comparison of the actual sizes of the helix and our design from Example 2, showing that our object is substantially larger and constructed using a much thicker ``wire''. Figure \ref{fig:freq_scans} shows the Hilbert-Schmidt norm of the far field operators for both objects as well as two relative chirality measures for a range of frequencies around the value of $\SI{200}{\micro\metre}$. The chirality measure $\chi(\mathcal{F}_D)$ has been introduced in \cite{FerFruRoc2016} and always satisfies the estimates $\| \mathcal{F}_D \|_{\hs} \geq \chi(\mathcal{F}_D) \geq \chi_{\hs}(\mathcal{F}_D)$, but is less smooth than $\chi_{\hs}$. The Hilbert-Schmidt norm of $\mathcal{F}_D$, whose square is also termed \emph{total interaction cross section} in \cite{FerFruRoc2016}, is a measure of the intensity of scattered fields generated by the obstacle over all directions of incidence.  The displayed quantities for the silver helix are those of a resimulation of the computations from \cite{FerFruRoc2016} using boundary elements, and are in good agreement of the plots shown in that reference. The comparison shows that our design, like the substantially smaller silver helix, has a peak in both of the chirality measures and in $\| \mathcal{F}_D \|_\hs$ at $\lambda = \SI{200}{\micro\metre}$.

\begin{figure}[p]
	\begin{tabular}{cc}
		\includegraphics[trim={0 0 0 0},clip,width=0.47\textwidth]{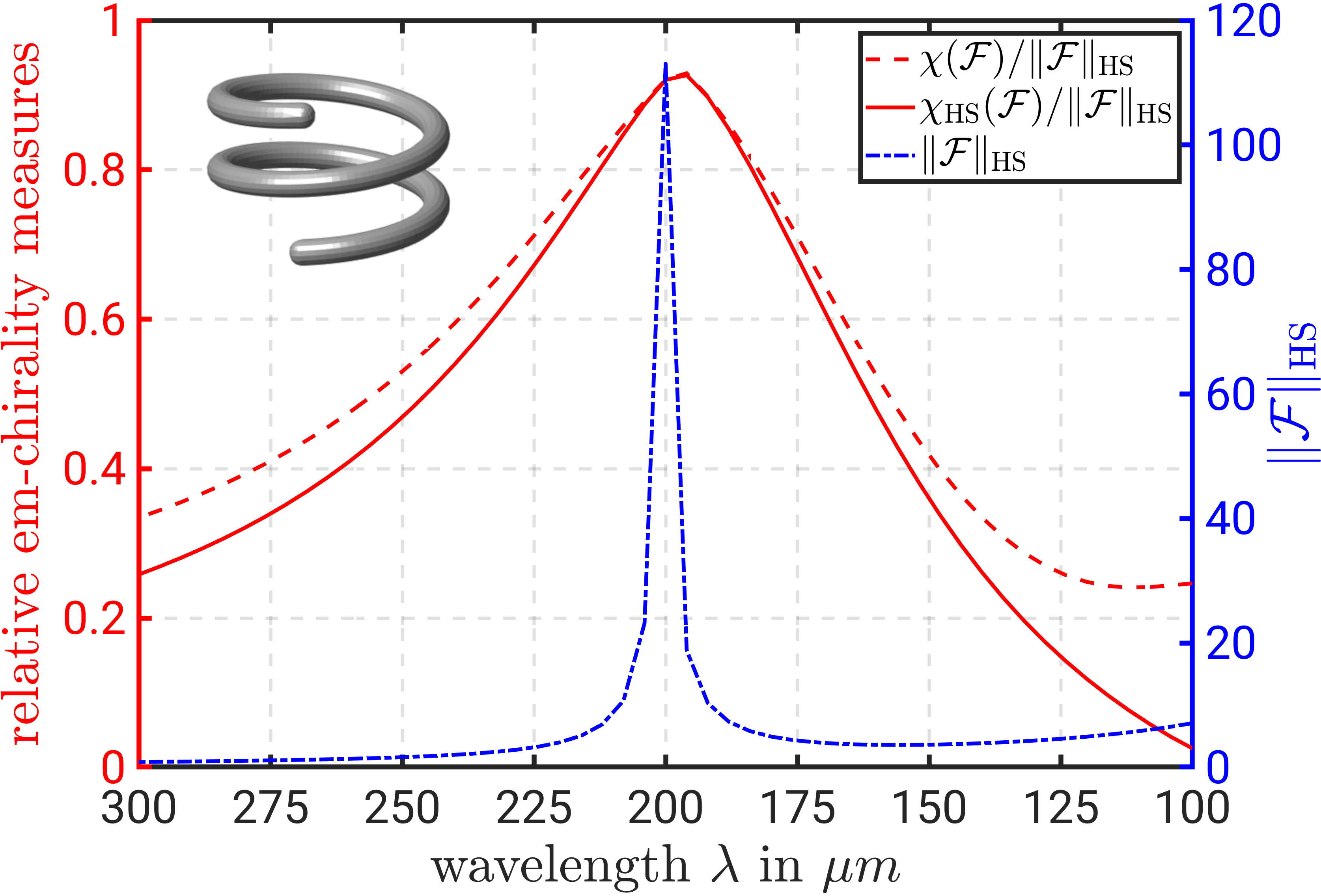}
		&
        \includegraphics[trim={0 0 0 0},clip,width=0.47\textwidth]{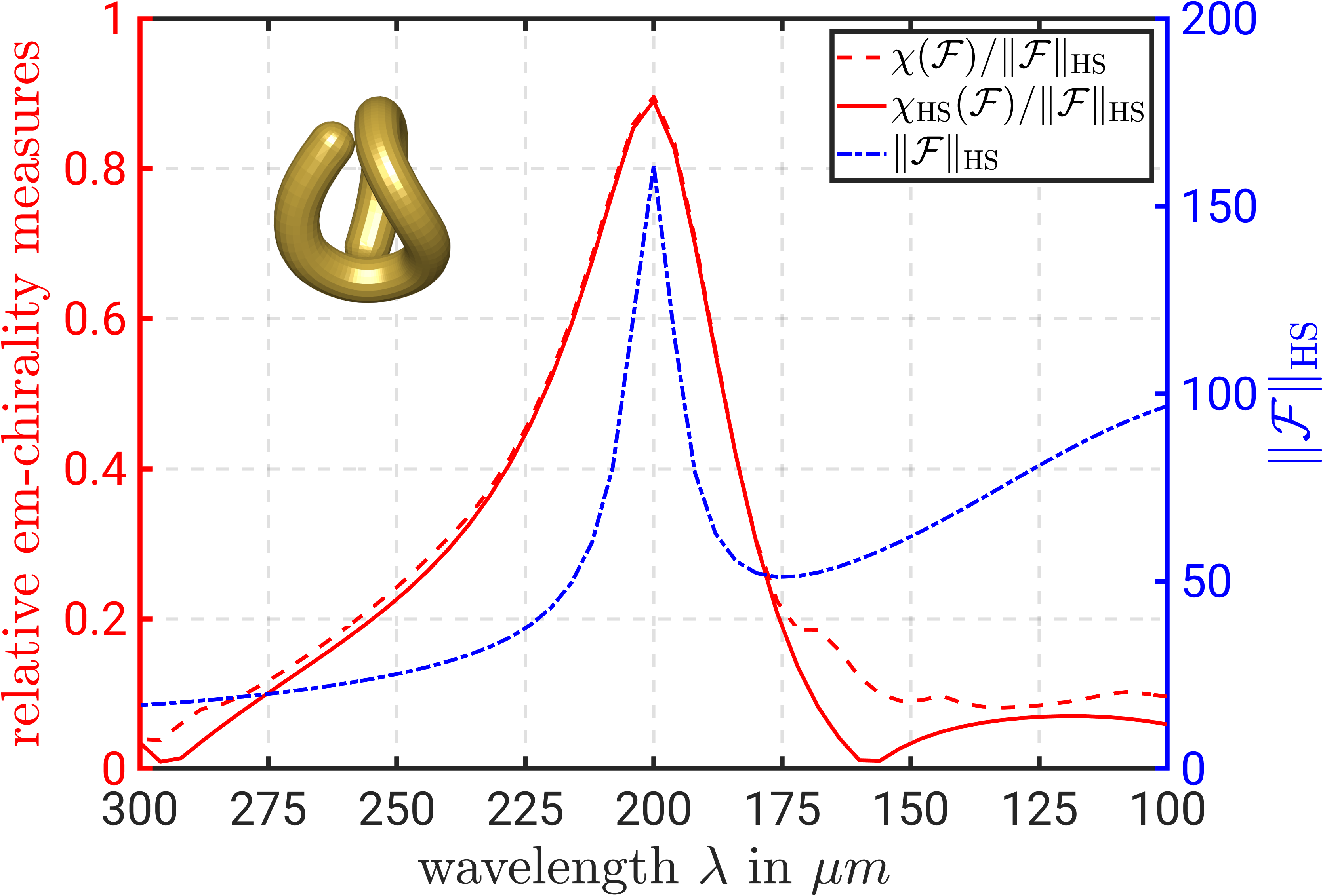} \\
        (a) & (b) \\[2ex]
        \includegraphics[trim={0 0 0 0},clip,width=0.47\textwidth]{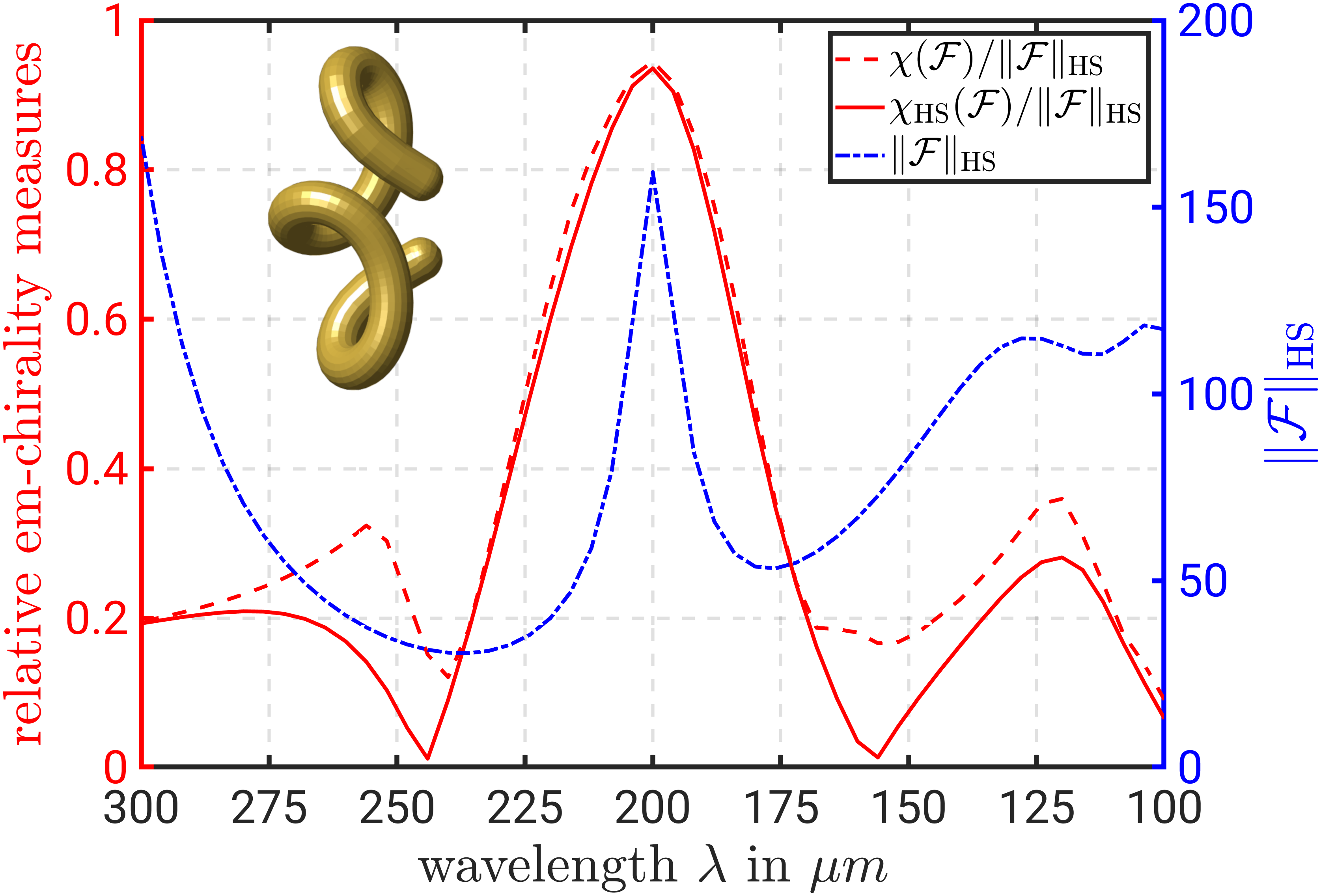}
        &
        \includegraphics[trim={0 0 0 0},clip,width=0.47\textwidth]{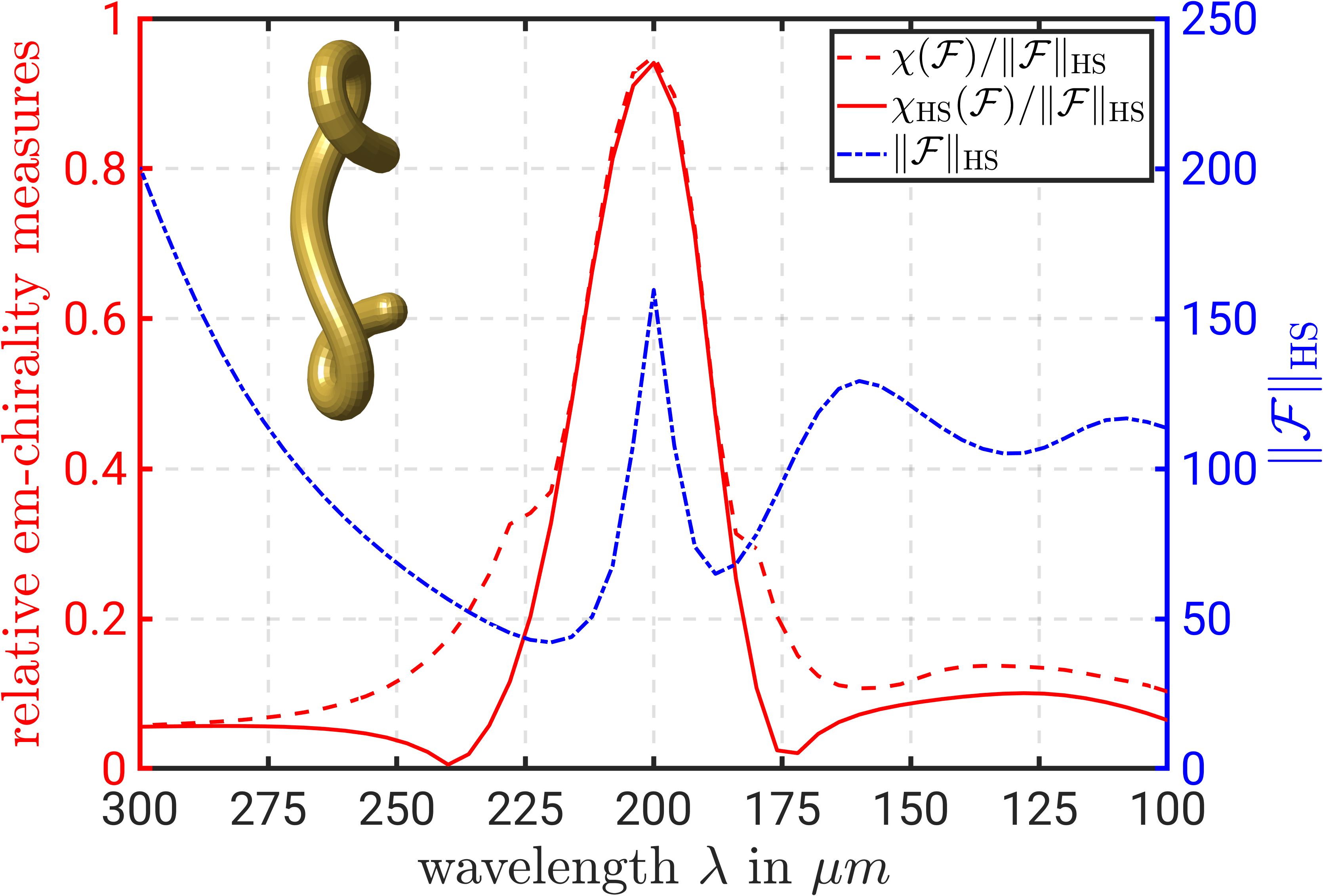} \\
        (c) & (d)
	\end{tabular}
	\caption{
	Frequency scans for wavelengths from $100$-$\SI{300}{\micro\metre}$ of the silver helix from \cite{FerFruRoc2016} (a) and our designs from Example 1--3 ((b) -- (d)). The chirality measure $\chi$ is defined in \cite{FerFruRoc2016}. The helix attains relative em-chirality measure of approximately $92.04\%$ for $\lambda = \SI{200}{\micro\metre}$.}
	\label{fig:freq_scans}
\end{figure}

\section{Conclusions}
\label{sec:Conclusions}

We have described and implemented a shape optimization scheme to generate highly em-chiral perfectly conducting tubular scatterers. The formulation of this algorithm requires the Fr\'echet derivative of the far field operator with respect to the Hilbert-Schmidt norm which we have proved to exist. Compared to our earlier work \cite{AreKnoSch2026} on iteratively solving inverse electromagnetic scattering shape reconstruction problems, we have managed to substantially speed up the computation times for solving the boundary integral equation which was necessary to make our approach feasible.

In our numerical experiments we have been able to robustly generate a number of non-intuitive shapes quite different from standard helices that exhibit very high measures of em-chirality. These obstacles are of a substantially bigger size and hence much easier to produce in practice than previously known examples with comparable physical properties. Our shape optimization approach thus shows how a larger variety of geometries for highly em-chiral objects may be generated while additionally satisfying further necessary design constraints. Attempts to optimize the thickness of the tube along its length have been less successful and will require novel and more sophisticated ideas to formulate suitable regularization terms.

\red{Along the way, we have established a novel uniqueness result for the inverse obstacle scattering problem with incident fields and far field observations of just one helicity.  This result rules out the existence of maximally em-chiral perfectly conducting scattering objects.}

\appendix

\section{Non-existence of perfectly conducting maximally em-chiral objects}

\red{As stated in Theorem \ref{thm:no_max_chiral_object}, there cannot exist any perfectly conducting object that attains the normalized em-chirality measure $\mathcal{J}_\hs = 1$. The argument for this fact is linked to a novel uniqueness result for the inverse electromagnetic obstacle scattering problem with incident fields and far field observations of just one helicity, which we provide in this appendix.
Its proof is a modification of the proof of a corresponding uniqueness theorem without assumptions on the helicity of the incident fields and the far field observations of the scattered fields that has been established in \cite{Kress02}.}

\newcommand{\Pcal}{\mathcal{P}}
\newcommand{\Einfty}{\vE^\infty}
\newcommand{\Hinfty}{\vH^\infty}
\newcommand{\vq}{{\boldsymbol{q}}}

\begin{theorem}
  \label{thm:UniquenessPEC}
  Assume that $D_1$, $D_2 \subseteq \RR^3$ are two perfectly conducting obstacles such that either
  \begin{subequations}
    \label{eq:UniquenessPEC}
    \begin{align}
      \Pcal^+(\Einfty_1(\cdot, \vd, \vp)) & \,=\, \Pcal^+(\Einfty_2(\cdot, \vd, \vp))
      && \text{for all } \vd\in\SS^2 \text{ and } \vp\in\CC^3 \text{ with }
         \I \, (\vd \times \vp) = \vp \,, \label{eq:UniquenessPECa}
    \intertext{or}
      \Pcal^-(\Einfty_1(\cdot, \vd, \vp))
      & \,=\, \Pcal^-(\Einfty_2(\cdot, \vd, \vp))
      &&\text{for all } \vd \in \SS^2 \text{ and } \vp \in \CC^3 \text{ with }
         \I \, (\vd \times \vp) = -\vp \,. \label{eq:UniquenessPECb}
    \end{align}
  \end{subequations}
  Then $D_1=D_2$.
\end{theorem}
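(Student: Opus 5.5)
We follow Kress's uniqueness proof for perfect conductors from \cite{Kress02} and only change how the scattered field is split. Assume \eqref{eq:UniquenessPECa}; the case \eqref{eq:UniquenessPECb} is identical with the signs reversed. Every radiating solution $(\vE^s, \vH^s)$ of the Maxwell system outside a ball splits uniquely into Beltrami fields $\vE^s = \vE^{s,+} + \vE^{s,-}$ with $\Dcurl \vE^{s,\pm} = \pm k \vE^{s,\pm}$, and the far field of $\vE^{s,\pm}$ is $\Pcal^\pm \vE^\infty$. Concretely, $\vE^{s,\pm} = \frac12 ( \vE^s \pm \frac1k \Dcurl \vE^s )$.

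\emph{Step 1.} Let $G$ be the unbounded component of $\RR^3 \setminus (\overline{D_1} \cup \overline{D_2})$. For a positive-helicity plane wave, the positive helicity parts $\vE^{s,+}_1$ and $\vE^{s,+}_2$ are radiating Beltrami fields, i.e.\ solutions of the Maxwell system. They have identical far field patterns, so Rellich's lemma gives $\vE^{s,+}_1 = \vE^{s,+}_2$ in $G$. By superposition this identity carries over to incident Herglotz pairs with densities in $V^+$. The negative helicity parts need not coincide, and this is the essential difference from Kress.

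\emph{Step 2.} Assume $D_1 \neq D_2$. Without loss of generality there is a point $\vx^* \in \partial G$ with $\vx^* \in \partial D_1 \setminus \overline{D_2}$. Choose points $\vx_n = \vx^* + \frac1n \vnu(\vx^*) \in G$ approaching $\vx^*$. As singular incident fields take the positive-helicity part of an electric dipole at $\vx_n$:
\[
\vE^i_n = \Dcurl\Dcurl(\vq\,\Phi(\cdot,\vx_n)) + k\,\Dcurl(\vq\,\Phi(\cdot,\vx_n)) ,
\]
which is a Beltrami field with $\Dcurl = k$ away from $\vx_n$. Mixed reciprocity together with a denseness argument extends the equality of the positive parts from Step 1 to these point sources. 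Concretely, the scattered field of $\vE^i_n$ is expressed through far fields of plane waves of positive helicity, and $\Pcal^+$ is self-adjoint and commutes appropriately with reciprocity ($\mathcal{F}^{++}$ is determined by the data). This yields $\vE^{s,+}_{1,n} = \vE^{s,+}_{2,n}$ in $G$.

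\emph{Step 3: the main obstacle.} The boundary condition concerns the full field, $\nu \times (\vE^i_n + \vE^{s,+}_{1,n} + \vE^{s,-}_{1,n}) = 0$ on $\partial D_1$, while only the positive parts agree. Near $\vx^*$ the field $\vE^{s}_{2,n}$ stays bounded because $\vx^*$ is at positive distance from $D_2$, so $\vE^{s,+}_{1,n} = \vE^{s,+}_{2,n}$ is bounded near $\vx^*$, uniformly in $n$. Our plan is to show that $\vE^{s,-}_{1,n}$ is also bounded near $\vx^*$, and then derive a contradiction from the boundary condition at $\vx^*$, since $\nu \times \vE^i_n(\vx^*)$ blows up like $n^3$ for suitable $\vq$.

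To control $\vE^{s,-}_{1,n}$, we first try an energy or variational estimate: $\vE^{s,-}_{1,n}$ solves a Beltrami problem in the exterior of $D_1$ with tangential boundary data $-\nu \times (\vE^i_n + \vE^{s,+}_{1,n})$ on $\partial D_1$. The singular part of this data is local, so it can be absorbed by writing the total field as a reflected dipole. A more robust alternative is to use Kress's original $L^2$ argument on a small ball around $\vx^*$: the norms $\|\vE^i_n\|_{L^2}$ blow up, while $\vE^{s,+}_{1,n}$ stays bounded. Since the operator $\Dcurl - k$ annihilates $\vE^{s,+}$, projecting the boundary identity onto the components of opposite helicity isolates the singular $\Dcurl\Dcurl$ dipole term. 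That term has a $|\vx - \vx_n|^{-3}$ singularity which no bounded field can cancel. This yields the contradiction, so $D_1 = D_2$.
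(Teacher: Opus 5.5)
Your Steps 1 and 2 are sound in substance. For Step 2, density of positive-helicity plane waves on compact subsets of $B_R$ plus analytic continuation in the source point suffices, as in the paper; no reciprocity is needed. Your $\vE^i_n$ is, up to the factor $\I/k$, the paper's $\vW^i_e$.

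The gap is Step 3, and your plan there cannot be completed. Since $\vE^{s,+}_{1,n}=\vE^{s,+}_{2,n}$ is bounded near $\vx^*$ while $|\vnu\times\vE^i_n(\vx^*)|$ grows like $n^3$, the boundary condition on $\partial D_1$ forces $\vnu\times\vE^{s,-}_{1,n}$ to blow up at the same rate. Boundedness of $\vE^{s,-}_{1,n}$ is therefore equivalent to the contradiction you want. It cannot come from information about $D_2$, which does not constrain $\vE^{s,-}_{1,n}$ at all.

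Worse, boundedness of $\vE^{s,+}_{1,n}$ is not contradictory in itself. Suppose $\partial D_1$ is flat near $\vx^*$. Then the scattered field is the mirror-image field $\vx\mapsto -S\,\vE^i_n(S\vx)$, with $S$ the reflection in the tangent plane, plus a remainder. That remainder is uniformly bounded near $\vx^*$, because its tangential data vanish on the flat patch and are uniformly smooth elsewhere. Since $S$ reverses orientation, the image satisfies $\Dcurl=-k$. Hence the whole $n^3$ singularity sits in $\vE^{s,-}_{1,n}$, and $\vE^{s,+}_{1,n}$ stays bounded even though $\vx^*\in\partial D_1$.

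Your $L^2$ variant fails for the same reasons. First, $\Dcurl-k$ also annihilates $\vE^i_n$, which you built to have positive helicity. Second, a trace identity on $\partial D_1$ cannot be differentiated. Third, the singular term is cancelled exactly by the unbounded $\vE^{s,-}_{1,n}$.

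The paper never splits the scattered field. It lets $\vW^s_{e,j}$ be the perfectly conducting scattered field of the incident field $\vW^i_e=\vE^i_e+\I\vH^i_e$. Then $\vnu\times\vW^s_{e,1}=-\vnu\times\vW^i_e$ gives the $n^3$ blow-up directly, and the paper asserts $\vW^\infty_{e,1}=\vW^\infty_{e,2}$ by density from \eqref{eq:FarfieldsEqual}.

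That density step, however, needs more than \eqref{eq:UniquenessPECa}. For a positive-helicity plane wave, $\vE^i+\I\vH^i$ is a scalar multiple of $\vE^i$. So the perfectly conducting scattered field of $\vE^i+\I\vH^i$ has far field a multiple of $\vE^\infty_j(\cdot,\vd,\vp)$. By contrast, \eqref{eq:FarfieldsEqual} only controls $\vE^\infty_j+\I\vH^\infty_j=2\mathcal{P}^+\vE^\infty_j$.

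As written, the paper's argument therefore uses equality of $\calF_{D_j}\mathcal{P}^+$, i.e.\ of both $\calF^{++}$ and $\calF^{-+}$. With that stronger data it is Kress's proof verbatim. A maximally em-chiral scatterer has $\calF^{++}=\calF^{-+}=0$, so Theorem~\ref{thm:no_max_chiral_object} still follows. The obstacle you isolated is the genuine difficulty of the statement when only $\mathcal{P}^+\vE^\infty$ is given, and neither your Step 3 nor the paper's proof as written overcomes it.
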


\begin{proof}
  From the Silver-M\"uller radiation condition, it follows that the far field pattern of the magnetic field is related to that of the electric field by
  \[
    \vH^\infty(\widehat{\vx}, \vd, \vp) = \widehat{\vx} \times \vE^\infty(\widehat{\vx}, \vd, \vp) \, , \qquad \widehat{\vp}, \vd \in \SS^2 \, , \ \vp \in \CC^3 \, , \ \vd \cdot \vp = 0 \, .
  \]
  Suppose that \eqref{eq:UniquenessPECa} holds. For any $\vd \in \SS^2$ and $\vp \in \CC^3$ such that $\I \, (\vd \times \vp) = \vp$ we obtain using the definition of the projection $\mathcal{P}^+$ in the paragraph below \eqref{eq:V_pm} that
  \begin{equation}
    \label{eq:FarfieldsEqual}
    \Einfty_1(\widehat{\vx}, \vd, \vp) + \I \, \Hinfty_1(\widehat{\vx}, \vd, \vp)
    \,=\, \Einfty_2(\widehat{\vx}, \vd, \vp) + \I \, \Hinfty_2(\widehat{\vx}, \vd, \vp) \,,
    \qquad \widehat{\vx} \in \SS^2 \,.
  \end{equation}
  We denote by
  \begin{equation}
    \label{eq:Dipole}
    \vE^i_e(\vx, \vz, \vq) \,=\, \frac{\I}{k} \Dcurl_{\vx} \Dcurl_{\vx} \vq \Phi(\vx - \vz) \,, \quad
    \vH^i_e(\vx, \vz, \vq) \,=\, \Dcurl_{\vx} \vq\Phi(\vx - \vz) \,,
    \qquad \vx\in\RR^3 \,, \; \vx \not= \vz \,,
  \end{equation}
  an electric dipole with polarization $\vq \in \CC^3$ at $\vz \in \RR^3$. Let $B_R \subseteq \RR^3$ be a sufficiently large ball such that $\overline{D_1} \cup \overline{D_2} \subseteq B_R$.
  Using \cite[Lmm. 3.2]{KirKre93}, we find that for any $z\not\in \overline{B_R}$ the function $\Phi(\cdot-z)$ together with its first and second derivatives can be uniformly approximated on compact subsets of $B_R$ by functions from $\Span\{\E^{\I k \vx \cdot \vd} : \vd\in\SS^2 \}$ (The second derivatives are not treated in \cite[Lmm. 3.2]{KirKre93}, but the result follows in the same way as for the first derivatives).  Accordingly, using \eqref{eq:Dipole} we obtain that
  \begin{equation*}
    \vW_e^i(\cdot, \vz, \vq) \,:=\, \vE^i_e(\cdot, \vz, \vq) + \I \vH^i_e(\cdot, \vz, \vq) \,, \qquad
    \vq \in \CC^3 \,, \quad \vz\not\in \overline{B_R} \,,
  \end{equation*}
  can be uniformly approximated on compact subsets of $B_R$ by functions from
  \begin{equation*}
    \Span \bigl\{ \vE^i(\cdot, \vd, \vp) + \I \vH^i(\cdot, \vd, \vp) \;\colon\;
    \vd \in\SS^2 \,,\; \vp \in \CC^3 \setminus \{ 0 \} \, , \; \vd \cdot \vp = 0 \bigr\} \,.
  \end{equation*}
  For any $\vd \in \SS^2$ and $\vp \in \CC^3 \setminus \{ 0 \}$, $\vd \cdot \vp = 0$, we can decompose $\vp = \vp_+ + \vp_-$ with $\vp_\pm := \vp \pm \I \, (\vd \times \vp)$ and we have that $\vE^i(\cdot, \vd, \vp) + \I \vH^i(\cdot, \vd, \vp) = \vE^i(\cdot, \vd, \vp_+)+\I \vH^i(\cdot, \vd, \vp_+)$. Therefore, $W_e^i(\cdot, \vz, \vp)$ with $\vz \not\in \overline{B_R}$ can in fact be uniformly approximated on compact subsets of $B_R$ by functions from
  \begin{equation*}
    \Span\bigl\{ \vE^i(\cdot, \vd, \vp) + \I \, \vH^i(\cdot, \vd, \vp) \;:\;
    \vd \in \SS^2 \,, \quad \vp \in \CC^3 \setminus \{ 0 \} \,, \quad \I \, (\vd \times \vp) = \vp \bigr\} \,.
  \end{equation*}

  Next, we denote by $\vW^s_{e,j}(\cdot, \vz, \vq)$, $j=1,2$, the unique radiating  solution to the exterior boundary value problem
  \begin{equation}
    \label{eq:MaxwellWsej}
    \Dcurl \Dcurl \vW^s_{e,j} - k^2 \vW^s_{e,j} \,=\, 0
    \quad \text{in } \RR^3 \setminus \overline{D_j} \,, \qquad
    \vnu\times \vW^s_{e,j} \,=\, -\vnu\times \vW^i_e \quad \text{on } \partial D_j \,,
  \end{equation}
  respectively. Using the continuous dependence of the far field pattern $\vW^\infty_{e,j}(\cdot, \vz, \vq)$ of $\vW^s_{e,j}(\cdot, \vz, \vq)$ on the boundary data $\nu\times \vW^s_{e,j}(\cdot, \vz, \vq)|_{\partial D_j}$, $j=1,2$, we conclude   from \eqref{eq:FarfieldsEqual} and the density result which we just have established that
  \begin{equation}
    \label{eq:FarfieldsWEqual}
    \vW^\infty_{e,1}(\widehat{\vx}, \vz, \vq)
    \,=\, \vW^\infty_{e,2}(\widehat{\vx}, \vz, \vq) \, , \qquad
    \widehat{\vx} \in \SS^2 \,, \quad \vz \not \in \overline{B_R} \,, \quad \vq \in \CC^3 \, .
  \end{equation}
  Let $\Omega_c \subseteq \RR^3$ denote the unbounded component of the complement of $\overline{D_1} \cup \overline{D_2}$. Since the far field patterns $\vW^\infty_{e,j}(\cdot, \vz, \vq)$, $j=1,2$, depend analytically on the parameter $\vz$ for $\vz \in \Omega_c$, the identity \eqref{eq:FarfieldsWEqual} in fact holds for all $\vz \in \Omega_c$. Rellich's lemma \cite[Thm.~6.10]{ColKre2019} shows that
  \begin{equation*}
    \vW^s_{e,1}(\vx, \vz, \vq)
    \,=\, \vW^s_{e,2}(\vx, \vz, \vq) \,, \qquad \vx, \, \vz \in \Omega_c \,, \quad \vq \in \CC^3 \,.
  \end{equation*}

  Now assume that $D_1\not= D_2$. Without loss of generality, there exists $\vx_*\in \partial \Omega_c$ such that
  $\vx_* \in \partial D_1$ and $x_* \not\in \overline{D_2}$.
  In particular we have
  \begin{equation*}
    \vz_n \,=\, \vx_* + \frac{1}{n} \vnu(x_*) \in \Omega_c \,, \qquad n=1,2,\ldots,
  \end{equation*}
  for sufficiently large $n$, where $\vnu(x_*)$ denotes the unit outward normal vector on $\partial  D_1$ in $vx_*$. Then, on the one hand we obtain that, for any $\vq \in \CC^3$,
  \begin{equation*}
    \lim_{n\to\infty} \vnu(\vx_*) \times \vW^s_{e,2}(\vx_*, \vz_n, \vq) \big|_{\partial D_1}
    \,=\, \vnu(\vx_*) \times \vW^s_{e,2}(\vx_*, \vx_*, \vq) \big|_{\partial D_1}
  \end{equation*}
  since $\vW^s_{e,2}(\vx_*, \cdot, \vq)$ is continuous in a neighborhood of $\vx_* \not\in \overline{D_2}$.
  On the other hand, we find that
  \begin{equation*}
    \lim_{n\to\infty} \bigl| \vnu(\vx_*) \times \vW^s_{e,1}(\vx_*, \vz_n, \vq) \big|_{\partial D_1} \bigr|
    \,=\, \lim_{n\to\infty}\bigl| \vnu(x_*) \times \vW^i_e(\vx_*, \vz_n, \vq) \big|_{\partial D_1} \bigr|
    \,=\, \infty
  \end{equation*}
  for $\vq \perp \vnu(\vx_*)$ with $\vq\not=0$, because of the boundary condition $\vnu \times \vW^s_{e,1}(\cdot, \vz_n, \vq) = - \vnu \times \vW^i_e(\cdot, \vz_n, \vq)$ on   $\partial D_1$ and since a short calculation exploiting the singularity in the fundamental solution shows that
  \begin{equation*}
    \bigl| \vnu(x_*) \times \vW^i_e(\vx_*, \vz_n, \vq) \big|_{\partial D_1} \bigr|
    \,=\,  \frac{|\vq|}{4\pi k^3} \, n^3 + \mathrm{O}(n^2)
    \qquad \text{as } n \to \infty \,.
  \end{equation*}
  This contradicts that
  \begin{equation*}
    \vnu(\vx_*) \times \vW^s_{e,1}(\vx_*, \vz_n, \vq) \big|_{\partial D_1}
    \,=\, \vnu(\vx_*) \times \vW^s_{e,2}(\vx_*, \vz_n, \vq) \big|_{\partial D_1}
  \end{equation*}
  for all $n\in\NN$. Thus $D_1=D_2$.

  The second case when \eqref{eq:UniquenessPECb} holds can be treated analogously.
\end{proof}

\section*{Acknowledgments}
Funded by the Deutsche Forschungsgemeinschaft (DFG, German Research
Foundation) -- Project-ID 258734477 -- SFB 1173.

Marvin Kn\"oller was supported by the Research Council of Finland (Flagship of Advanced Mathematics
for Sensing, Imaging and Modelling grant 359182).

\small
\bibliographystyle{abbrvurl}
\bibliography{opt-design-tubular-pc-objects}
\end{document}